\newcommand{\ol}{\overline}
\newcommand{\ts}{\textsc}
\def\Sp{\vspace{1.5ex}}
\def\AA{\forall}
\def\imp{\to}
\def\toto{\twoheadrightarrow}
\def\vext{\bar{\v}}
\def\md{\mathrm{md}\,}
\def\vf{\phi}
\def\mo{\vDash}
\def\con{\wedge}
\def\emp{\varnothing}
\def\clC{\mathcal{C}}
\def\clA{\mathcal{A}}
\def\clK{\mathcal{K}}
\def\clP{\mathcal{P}}
\def\clI{\mathcal{I}}
\def\clJ{\mathcal{J}}
\def\clK{\mathcal{K}}
\def\Log{\logicts{Log}}
\newcommand\logicts[1]{\mathrm{#1}}
\newcommand\languagets[1]{\logicts{#1}}
\def\PV{\languagets{PV}}
\def\NLog{\logicts{K}}
\def\mR{\mathbb{R}}
\def\mQ{\mathbb{Q}}
\theoremstyle{definition}
\newtheorem{definition}{Definition}
\newtheorem{remark}{Remark}
\newtheorem{theorem}{Theorem}
\newtheorem*{theorem*}{Theorem}
\newtheorem{lemma}[theorem]{Lemma}
\newtheorem{prp}[theorem]{Proposition}
\newtheorem{proposition}[theorem]{Proposition}
\newtheorem{corollary}{Corollary}
\newtheorem{problem}{Problem}
\newtheorem*{problem*}{Problem}
\newtheorem*{remark*}{Remark}
\newtheorem{claim}{Claim}
\def\EE{\exists}
\def\AA{\forall}
\def\Log{\mathop{\mathrm{Log}}}
\def\Alg{\mathop{\mathrm{Alg}}}
\def\Di{\lozenge}
\def\val{v}
\def\valu{u}
\def\vext{\bar{\val}}
\newcommand\R{\mathbb{R}}
\newcommand\Q{\mathbb{Q}}
\newcommand\N{\mathbb{N}}
\newcommand\Z{\mathbb{Z}}
\newcommand\Dl{\Di_{\scaleto{<}{3.5pt}\,1}}
\newcommand\Dg{\Di_{\scaleto{>}{3.5pt}\,1}}
\newcommand\De{\Di_{\scaleto{=}{2.5pt}\,1}}
\newcommand\Dle{\Di_{\scaleto{\leq}{3.5pt}\,1}}
\newcommand\Logg{\Log_{\scaleto{\,>}{3.5pt}1}}
\newcommand\Logl{\Log_{\scaleto{\,<}{3.5pt}1}}
\newcommand\Loge{\Log_{\scaleto{\,=}{2.5pt}1}}
\newcommand\Logle{\Log_{\scaleto{\,\leq}{3.5pt}1}}
\author{\fnm{Gabriel} \sur{Agnew}}
\author{\fnm{Uzias} \sur{Gutierrez-Hougardy}}
\author{\fnm{John}  \sur{Harding}}
\author{\fnm{Ilya} \sur{Shapirovsky}}
\author{\fnm{Jackson} \sur{West}}
\affil{New Mexico State University}
\abstract{We consider logics derived from Euclidean spaces $\R^n$. Each Euclidean space carries relations consisting of those pairs that are, respectively, distance more than~1 apart, distance less than 1 apart, and distance 1 apart. Each relation gives a uni-modal logic of $\R^n$ called the farness, nearness, and constant distance logics, respectively. These modalities are expressive enough to capture various aspects of the geometry of $\R^n$ related to bodies of constant width and packing problems. This allows us to show that the farness logics of the spaces $\R^n$ are all distinct, as are the nearness logics, and the constant distance logics. The farness and nearness logics of $\R$ are shown to strictly contain those of $\Q$, while their constant distance logics agree. It is shown that the farness logic of the reals is not finitely axiomatizable and does not have the finite model property. }
\keywords{modal logic, modal algebra, Euclidean space, distance logic, Helly's theorem, bodies of constant width, finite axiomatizability, finite model property. }
\title{On distance logics of Euclidean spaces}
\begin{document}
\maketitle
\section{Introduction}

Let $(X,d)$ be a metric space, which we usually refer to as simply $X$. There are natural ways to associate a modal logic to $X$. McKinsey and Tarski considered topological closure as a modal operator. 
Under this modality, the logic of any dense in itself metric space is Lewis' modal logic S4 \cite{MK-TAR44,RasiowaSikorski1963}.
One can also use the metric directly to define binary relations on $X$ and then the consider the associated modal operators. For example, for each real number $r>0$ set $R_{<r}=\{(x,y)\mid d(x,y)<r\}$
and let $\Di_{<r}$ be the associated modal operator on the powerset of $X$. Relations $R_{\leq r}$ etc., as well as their associated modal operators $\Di_{\leq r}$ etc., are defined in analogous ways. 

The general setting of the series of papers \cite{Kutz-Sturm-Suzuki-Wolter-Zakharyaschev2002, Kutz-Sturm-Suzuki-Wolter-Zakharyaschev2003,Wolter2005,KuruczWZ05,Kutz2007}
considers metric spaces with some specified collection of modal operators chosen from the topological closure operator and the operators $\Di_{<r}$ etc. Numerous results were established related to the axiomatizability and decidability of the modal logics of the class of all metric spaces with different families of operators. For instance, the logic of all metric spaces with operators $\Di_{<r}$ for each $r>0$ is given by the family of axioms:

\begin{itemize}[leftmargin=5ex]
\item[] $p\,\to\,\Di_{\scaleto{<}{3.5pt}\,r} p$ \vspace{.5ex}
\item[] $p\,\to\,\Box_{\scaleto{<}{3.5pt}\,r}\Di_{\scaleto{<}{3.5pt}\,r}p$ \vspace{.5ex}
\item[] $\Di_{\scaleto{<} {3.5pt}\,r}\Di_{\scaleto{<} {3.5pt}\,s}\,p\to\,\Di_{\scaleto{<}{3.5pt}\,r+s}\, p$
\end{itemize}
\vspace{1ex}

\noindent The first axiom says that for a subset $P$ of a metric space, we have $P$ is contained in the set of points of distance less than $r$ from $P$; the second expresses the symmetry of the metric; the third comes from the triangle inequality. The validity of these axioms is obvious, but some effort is required for their completeness. A related direction that we do not follow here considers binary modality reflecting where $x$ is closer to $z$ than $y$, see, e.g., \cite{Sher-Wolt-Zakh2010,AutomatedReasoningTableaux2009}. 

In this paper we consider Euclidean spaces $\R^n$ with the standard metric and equipped with a single modal operator $\Di_{>r}, \Di_{<r}$ or $\Di_{=r}$. By the scale invariance of $\R^n$ it is enough to consider the case when $r=1$. By the \emph{farness logic} of $\R^n$ we mean the set of modal formulas valid in $\R^n$ with the $\Dg$ modality, the \emph{nearness logic} of $\R^n$ is the set of formulas valid in the $\Dl$ modality, and the \emph{constant distance} logic is the set of formulas valid in the $\De$ modality. We denote these as 
\begin{align*}
\mbox{$\Logg(\R^n)$}\,\,&=\,\,\mbox{ the formulas valid in $\R^n$ in the $\Dg$ modality}\\[.5ex]
\mbox{$\Logl(\R^n)$}\,\,&=\,\,\mbox{ the formulas valid in $\R^n$ in the $\Dl$ modality}\\[.5ex]
\mbox{$\Loge(\R^n)$}\,\,&=\,\,\mbox{ the formulas valid in $\R^n$ in the $\De$ modality}
\end{align*}

Under the topological closure modality, the logic of $\R^n$ for $n\geq 1$ is S4 since each is a dense in itself metric space. The situation for farness, nearness, and constant distance logics is much different. Each modality is sufficiently expressive to capture geometric features of $\R^n$ specific to its dimension. These features range from properties of convex sets, to bodies of constant width, to sphere packing problems, and chromatic numbers of unit distance graphs.

Using these techniques, we show that the farness logics $\Logg(\R^n)$ are pairwise distinct; that the nearness logics $\Logl(\R^n)$ are pairwise distinct and form a strictly decreasing chain; and that the constant distance logics $\Loge(\R^n)$ are pairwise distinct and the poset of these logics contains an infinite antichain. 

We compare the farness, nearness, and constant distance logics of $\R$ to those of the rationals $\Q$. In each case the logic of $\R$ is shown to contain the corresponding logic of $\Q$. For the constant distance modality we have equality $\Loge(\R)=\Loge(\Q)$. 
It is perhaps surprising that the farness and nearness modalities are each sufficiently expressive to allow one to formulate versions of connectivity to 
%
show the nearness and farness logics of $\R$ are distinct from those of $\Q$. 
The farness logic of $\R$ is analyzed in more detail. It is shown that it cannot be axiomatized using only finitely many variables, and that it does not have the finite model property.

The paper is arranged in the following way. The second section is preliminaries. In the third section Theorem~\ref{thm: one non-containment} provides that the farness logics of $\R^n$ for $n\in\N$ are distinct; Proposition~\ref{prp: max in ball} provides the same for the nearness logics; and Corollary~\ref{cor:equidist-antichain} does the same for the constant distance logics. In the fourth section we compare the farness, nearness, and constant distance logics of $\R$ and $\Q$;
these results are given in 
Proposition \ref{prop:dim1:r=1}, Theorems \ref{thm:combined} and  \ref{thm:less:Qincluded-in-R}. In the fifth section we discuss axiomatizability and the finite model property for $\R$; the main results of this section are Proposition~\ref{prp:nonFA} and Theorem~\ref{thm:noFMP}. In the sixth section we give a summary of the results and several open problems as well as suggested directions for further study.

\section{Preliminaries}\label{sec:prel} 

In this section, we provide the basic notation and terminology used throughout the paper. It is assumed that the reader is familiar with the basics of modal logic, as described in \cite{BDV}.

\subsection{Modal syntax} 
We consider uni-modal logics with a single modality denoted by $\Dg$, $\Dl$, $\De$, depending on context, and usually written in a given context simply $\Di$. 
The set of {\em modal formulas} is built from a countable set of {\em variables} $\PV=\{p_0,p_1,\ldots\}$ using Boolean connectives $\bot,\imp$ and the unary connective $\Di$. Other Boolean connectives are defined as abbreviations in the standard way, and $\Box\vf$ denotes $\neg \Di \neg \vf$.

\subsection{Relational semantics} 
A {\em frame} $F=(X,R)$ consists of a set $X$ and a binary relation $R$ on $X$. We write $x\,R\,y$ to indicate that $x$ and $y$ are related and for $x\in X$ and $Y\subseteq X$, we put $R(x)=\{y\mid xR y\}$ and we write $R[Y]$ for the relational image $\{x\mid y\, R\, x$ for some $y\in Y\}$.

A {\em valuation} in a frame $F$ is a map $v:\PV\to \clP(X)$ from the set of propositional variables to the powerset of $X$. A {\em model on} $F$ is a pair $(F,\val)$, where $\val$ is a valuation. \emph{Truth} at a point $x$ of a model is defined in the standard way, in particular $F,x\mo_v\Di \vf$ if  ${F,y\mo_v\vf}$  for some $x\,R\,y$,  and thus $F,x\models_v\Box\vf$  if $F,y\models_v\vf$ for all $y$ with $x\,R\, y$. Set $$\vext(\vf)\,=\,\{x\mid F,x\mo_v\vf\}.$$
A  formula $\vf$ is {\em true} in a model $(F,v)$, in symbols $F \mo_v\vf$,  if $F,x\mo_v\vf$ for all $x$ in $X$, that is, if $\ol{v}(\vf)=X$. A formula $\vf$ is {\em valid} in a frame $F$, in symbols $F\mo\vf$, if $\vf$ is true for every valuation in $F$. A formula $\vf$ is \emph{satisfiable} in $F$ if $\neg\vf$ is not valid in $F$, i.e., if there is a valuation $v$ in $F$ and a point $x$ in $X$ with $F,x\models_v\vf$.

\subsection{Algebraic  semantics} 
A {\em modal algebra} $B$ is a Boolean algebra endowed with a unary operation $\Di$ 
that distributes over finite joins, so in particular satisfies $\Di(0)=0$. An \emph{interpretation} in a modal algebra is a mapping $v:\PV\to B$ of the propositional variables into its underlying Boolean algebra. An interpretation extends in the usual way to a mapping $\ol{v}$ from the collection of all modal formalas to $B$. A modal formula $\vf$ is \emph{true} under the interpretation $v$ if $\ol{v}(\vf)=1$ and is \emph{valid} in $B$ if it is true for all interpretations, i.e., if~$B$ satisfies the equation $\vf=1$. 


The {\em algebra of a frame}  $F=(X,R)$, written $\Alg{F}$, is the powerset Boolean algebra of $X$ with the unary operation $\Di$ given by  $\Di(Y)=R^{-1}[Y]$ for each subset $Y\subseteq X$. It is immediate that  the algebra of a frame is a modal algebra, that valuations $v$ in $F$ correspond to interpretations in $\Alg{F}$, and that the value of $\ol{v}$ is the same in either context. See \cite[Section 5.2]{BDV} for details.


A formula is {\em valid in a class} $\clC$ of (relational or algebraic) structures if it is  valid in every structure in $\clC$. Validity of a set of formulas means validity of every formula in this set.

\subsection{Normal logics} 
A ({\em propositional normal modal}) {\em logic} is a set $L$ of formulas that contains all classical tautologies, the axioms $\neg\Di \bot$ and $\Di  (p \vee q) \imp \Di  p \vee \Di  q$, and is closed under the rules of modus ponens, substitution, and the following rule of {\em monotonicity}: $\vf\imp\psi\in L$ implies $\Di \vf\imp\Di \psi\in L$. The smallest logic is denoted $\NLog$. For a logic $L$ and a set $\Phi$ of modal formulas, $L+\Phi$ is the smallest normal logic that contains $L\cup\Phi$.

An {\em $L$-structure}, a frame or algebra, is a structure where each formula in $L$ is valid. 
The set of all formulas that are valid in a class $\clC$ of relational or algebraic structures is a propositional normal modal logic called the {\em logic of} $\clC$ and denoted $\Log{\clC}$. When $\clC$ consists of a single frame $F$ or modal algebra $B$, this is written $\Log{F}$ or $\Log{B}$. Any propositional normal modal logic $L$ is the logic of the class of all $L$-algebras that satisfy all equations $\vf=1$ for $\vf\in L$, see, e.g., \cite{CZ}. A logic $L$ is {\em Kripke complete} if $L$ is the logic of a class of frames. 
A logic has the {\em finite frame property}, if it is the logic of a class of finite frames, meaning frames whose underlying sets are finite. 
In our setting, this is equivalent to the more common notion of the {\em finite model property}, see, e.g., \cite[Section 3.4]{BDV}.

\section{Distinguishing logics of $\mR^n$}

In this section we consider the logics of the Euclidean spaces $\R^n$ for $n\in\N$ in languages with a single modality $\Dg$, $\Dl$, or $\De$. In each signature we show that the language is sufficiently expressive to distinguish each of the logics $\Log{\R^n}$. We additionally show that for the $\De$ signature we show there is an infinite anti-chain among the logics for the $\R^n$ and for the $\Dl$ signature the logics form a strictly decreasing chain. 

To provide an informal overview, for the farness logics considered in Section~3.1 we employ Helly's theorem from convex geometry to produce a formula valid in small dimension but not in large dimension. We also provide another approach to the same end using volumes of unit bodies of constant width. 

In Section~3.2 we consider nearness logics. There is an obvious $p$-morphism from larger to smaller dimension providing a containment. To show this containment is strict we use a packing argument to provide a formula valid in small dimension but not in large dimension, roughly, that a certain number of balls of radius 1/2 in a ball of radius 1 must overlap. 

In Section~3.3 we show that the logics of fixed distance contain an infinite anti-chain. We first note that in dimension $>2$ we have $\De\circ\De = \Di_{\scaleto{\leq}{3.5pt}\,2}$, giving access to results obtained for nearness logics and formulas valid in small dimension but not in large dimension. To obtain formulas valid in large dimension and not in small dimension we use coloring properties of unit distance graphs.

\subsection{Farness logics}
The \emph{farness logic} has the single modality~$\Dg$. Since this sub-section will treat only this modality, we write it simply as $\Di$, and for a metric space $X$ write $x\,R\,y$ iff $d(x,y)>1$. When $x\,R\,y$ we say that $x,y$ are \emph{far}. We write $\Logg(X)$ for the modal formulas with this modality that are valid in $X$.  

A first observation is that in any unbounded metric space, and in particular in every Euclidean space, $R\circ R$ is the universal relation. So if $v$ is a valuation, then writing $\Di^2$ for $\Di\Di$ and $\Box^2$ for $\Box\Box$, we have
\begin{equation}
\ol{v}(\Di^2p)\,\,=\,\,\begin{cases}
\R^n&\mbox{if }v(p)\neq\emptyset\\ \emptyset&\mbox{otherwise}    
\end{cases}
\quad \mbox{ and }\quad
\ol{v}(\Box^2p)\,\,=\,\,\begin{cases}
\R^n&\mbox{if }v(p)=\R^n\\ \emptyset&\mbox{otherwise}    
\end{cases}
\label{eqn: universal in Dg}
\end{equation}

Another basic observation that we will use involves an \emph{anti-clique}, a set of points with no pair of them far, i.e. each pair $x,y$ of distance at most 1 apart. A maximal anti-clique is then an anti-clique that is not properly contained in any other. Observe that 
\begin{equation}
\R^n\models_v \Box^2(p\leftrightarrow \neg\Di p) \,\mbox{ iff }\, v(p)\mbox{ is a maximal anti-clique}
\label{eqn: max anti-clique}
\end{equation}
It is easily seen that a closed ball of radius $\frac{1}{2}$ is a maximal anti-clique in any  $\R^n$. In dimension 1, it is not difficult to see that the closed intervals of length 1 are exactly the maximal anti-cliques. In higher dimensions, there are others, for instance, the Reuleaux triangle shown 
in Figure \ref{fig:rel} 
is a maximal anti-clique in $\R^2$. In fact, the maximal anti-cliques in the farness logic are the closed convex bodies of constant width 1. We will establish this fact later in this sub-section when it is needed. 
\vspace{1ex}

\begin{figure}[h]
\centering
\begin{tikzpicture}
\draw (2,0) arc[start angle=0, end angle=60,radius=2cm];
\draw (1,1.733) arc [start angle=120, end angle=180, radius=2cm];
\draw (0,0) arc [start angle=240, end angle=300, radius=2cm];
\end{tikzpicture}
\captionsetup{justification=centering}
\caption{Reuleaux triangle}
\label{fig:rel}
\end{figure}

\begin{prp}
\label{prp: R>1 not contained in R^n}
$\Logg(\R)$ is not contained in $\Logg(\R^n)$ for any $n>1$. 
\end{prp}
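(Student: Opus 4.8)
The plan is to exhibit a single modal formula that is valid in $\R$ but fails in every $\R^n$ with $n>1$, and the natural candidate is a modal encoding of Helly's theorem specialized to dimension one. Recall from \eqref{eqn: universal in Dg} that $\Di^2$ and $\Box^2$ behave as the "somewhere/everywhere" quantifiers, so that for a valuation $v$ the formula $\Di^2\psi$ is true (everywhere) precisely when $\ol v(\psi)\neq\emptyset$; and recall from \eqref{eqn: max anti-clique} that $\Box^2(p\lra\neg\Di p)$ forces $v(p)$ to be a maximal anti-clique. Using three variables $p_1,p_2,p_3$ I would take the formula
\[
\Big(\bigwedge_{i=1}^3\Box^2(p_i\lra\neg\Di p_i)\;\wedge\;\bigwedge_{1\le i<j\le 3}\Di^2(p_i\wedge p_j)\Big)\;\imp\;\Di^2(p_1\wedge p_2\wedge p_3),
\]
whose antecedent says that $v(p_1),v(p_2),v(p_3)$ are pairwise intersecting maximal anti-cliques and whose consequent says that the three sets share a common point. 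Since every subformula here is prefixed by $\Di^2$ or $\Box^2$, its truth value is constant across the space, so validity of the implication reduces to a purely set-theoretic statement about maximal anti-cliques.

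For validity in $\R$, I would invoke the description (recorded informally earlier in this subsection) that the maximal anti-cliques of $\R$ are exactly the closed intervals of length $1$, which are in particular convex and compact. Helly's theorem in dimension one (Helly number $2$) then says that any finite family of convex subsets of $\R$ that is pairwise intersecting has a common point; applied to three pairwise intersecting closed unit intervals this yields the consequent, so the implication holds under every valuation and the formula lies in $\Logg(\R)$.

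To refute the formula in $\R^n$ for $n>1$ I would realize the classical obstruction to Helly's theorem one dimension up. Fix a $2$-plane in $\R^n$ and three points $c_1,c_2,c_3$ forming an equilateral triangle of side $s$ with $\tfrac{\sqrt3}{2}<s\le 1$, and let $v(p_i)$ be the closed ball of radius $\tfrac12$ centered at $c_i$. Each such ball is a maximal anti-clique, so the first conjunct holds. Two of these balls meet iff the distance between their centers is at most $1$, which holds as $s\le 1$; so the second conjunct holds. But the three balls have a common point iff there is a point within $\tfrac12$ of all three centers, i.e. iff the circumradius $s/\sqrt3$ of the triangle is at most $\tfrac12$; since $s>\tfrac{\sqrt3}{2}$ this fails, and moving off the $2$-plane only increases all three distances, so the consequent fails in every dimension $n\ge 2$. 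The formula is therefore refuted, giving $\Logg(\R)\not\subseteq\Logg(\R^n)$.

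The distance computations are routine; the two points requiring care are the characterization of the maximal anti-cliques of $\R$ as closed unit intervals, so that convexity and hence Helly's theorem apply, and the observation that the circumradius criterion for a common intersection of the three balls is insensitive to the ambient dimension. I expect the main conceptual step to be this packaging — expressing the one-dimensional Helly phenomenon modally and checking that a single configuration violates it simultaneously in all higher dimensions — while the underlying geometry is elementary.
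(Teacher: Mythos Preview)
Your proposal is correct and follows essentially the same approach as the paper: the identical formula is used, with maximal anti-cliques in $\R$ identified as closed unit intervals so that three pairwise-intersecting ones must share a point, while in $\R^n$ for $n>1$ three suitably placed radius-$\tfrac12$ balls violate this. Your write-up is in fact more detailed than the paper's, which simply asserts the higher-dimensional failure without spelling out the equilateral-triangle configuration or the circumradius computation.
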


\begin{proof}
The idea is simple. In $\R$, if $A, B$ and $C$ are closed balls of radius $\frac{1}{2}$, i.e. closed unit intervals, such that any two intersect non-trivially, then the three intersect non-trivially. However, the corresponding statement is not true in $\R^n$ for $n>1$. Thus 
\[
\bigwedge_{i=1}^3\Box^2 (p_i\leftrightarrow \neg \Di p_i)\wedge \bigwedge_{i=1}^3 \Di^2 (\bigwedge_{i\neq j}p_j)\,\,\to\,\, \Di^2(p_1\wedge p_2\wedge p_3)
\]
\vspace{.5ex}

\noindent is valid in $\R$ but not in $\R^n$ for $n>1$. 
\end{proof}

We next provide an extension of Proposition~\ref{prp: R>1 not contained in R^n}. For this, we need to extend the geometric reasoning used in that proposition. Recall than a regular unit $n$-simplex consists of $n+1$ points in $\R^k$ for some $k\geq n$ such that any two are distance 1 apart. It is well known that the unit $n$-simplex can be realized in $\R^{n+1}$ by the scaled standard basis vectors $\frac{1}{\sqrt{2}}e_1,\ldots,\frac{1}{\sqrt{2}}e_{n+1}$. Using this, one can determine that the height $h_n$ of a unit $n$-simplex, 
i.e., the distance from a vertex to an opposing face, 
and distance $d_n$ from a vertex to its centroid are given by 
%
\[
h_n\,=\,\sqrt{\frac{n+1}{2n}}\quad\mbox{and}\quad
d_n\,=\,\sqrt{\frac{n}{2(n+1)}}
\]

\noindent Any $n$ vertices of a regular unit $n$-simplex lie in a regular unit $(n-1)$-simplex. So there is exactly one point within distance $d_{n-1}$ of each set of $n$ vertices. However, the remaining vertex is height $h_n$ above the hyperplane of the others, so there are no points within distance $d_{n-1}$ of all of the vertices. Since the regular unit $n$-simplex can be embedded into any $\R^k$ for $k\geq n$, we have the following
\Sp

\begin{lemma}
\label{lem: exist balls}
For each $n,k\in\N$, with $n\leq k$, there are $n+1$ closed balls of equal radius in $\R^k$ such that any $n$ of these balls contain a common point of intersection and the intersection of all $n+1$ of the balls is empty.    \end{lemma}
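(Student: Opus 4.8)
The construction is already prefigured by the simplex computation above, so the plan is to turn those numbers into the required balls. First I would realize a regular unit $n$-simplex with vertices $v_0,\dots,v_n$ inside $\R^k$; this is possible precisely because $n\le k$, using the standard realization by the vectors $\frac{1}{\sqrt 2}e_1,\dots,\frac{1}{\sqrt 2}e_{n+1}$ in $\R^{n+1}$ and then isometrically embedding its $n$-dimensional affine hull into $\R^k$. I would then take all $n+1$ balls to be the closed balls $B_0,\dots,B_n$ of a single common radius $r=d_{n-1}$ centered at the vertices $v_0,\dots,v_n$.

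For the ``any $n$ intersect'' half, fix any $n$ of the vertices. By the remark preceding the lemma these are the vertices of a regular unit $(n-1)$-simplex, whose circumcenter $c$ satisfies $|c-v_i|=d_{n-1}=r$ for each of these $n$ indices. Hence $c$ lies in (indeed on the boundary of) each of the corresponding $n$ closed balls, so any $n$ of the $B_i$ share the point $c$.

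For the ``all $n+1$ have empty intersection'' half, a point $x\in\bigcap_{i=0}^{n}B_i$ would have to be within distance $r=d_{n-1}$ of every vertex. But the smallest radius for which some center lies within that distance of all $n+1$ vertices is the circumradius of the full simplex, namely $d_n$; by symmetry the minimal enclosing ball of a regular simplex is its circumscribed ball, centered at the centroid. Since the explicit formulas give $d_{n-1}=\sqrt{\tfrac{n-1}{2n}}<\sqrt{\tfrac{n}{2(n+1)}}=d_n$, no such $x$ exists and the full intersection is empty.

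The one real point requiring care is this emptiness claim, and I expect justifying that no point lies within distance $d_{n-1}$ of all $n+1$ vertices to be the crux. The cleanest route is the circumradius comparison $d_{n-1}<d_n$ above, which rests on the symmetry fact that the minimal enclosing ball of the regular simplex has radius $d_n$. Alternatively, following the computation just before the lemma, one fixes any $n$ vertices and checks that the intersection of their $n$ balls of radius $d_{n-1}$ is the single point $c$; a short Pythagorean argument shows any candidate off the hyperplane of those vertices projects to a point already at distance at least $d_{n-1}$ from some vertex, forcing it to equal $c$. Since $c$ is the foot of the altitude from the remaining vertex, it sits at distance $h_n=\sqrt{\tfrac{n+1}{2n}}>d_{n-1}$ from that vertex, again excluding $x$. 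Either way the whole argument reduces to a single elementary inequality between the simplex constants.
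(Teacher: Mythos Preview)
Your proposal is correct and follows essentially the same construction as the paper: closed balls of radius $d_{n-1}$ centered at the $n+1$ vertices of a regular unit $n$-simplex embedded in $\R^k$, with the ``any $n$ intersect'' part witnessed by the circumcenter of a face and the emptiness part obtained either from $d_{n-1}<d_n$ or, as the paper does, from the fact that the unique common point of any $n$ balls lies at distance $h_n>d_{n-1}$ from the remaining vertex. Your second emptiness argument is exactly the paper's; the circumradius comparison is a clean alternative.
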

\Sp

On a related note, Helly's theorem, see e.g. \cite[p.~48]{Lay-convex}, says that if, in $\R^n$, we have a finite collection of $k\geq n+1$ convex sets with any $n+1$ of them containing a common point of intersection, then the intersection of all of them is non-empty. 
\Sp

\begin{theorem} 
\label{thm: one non-containment}
If $m>n$ then $\Logg(\R^n)$ is not contained in $\Logg(\R^m)$.  
Consequently, the logics $\Logg(\R^n)$ are all distinct.
\end{theorem}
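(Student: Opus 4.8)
The plan is to generalize the formula and the geometric argument of Proposition~\ref{prp: R>1 not contained in R^n}, replacing the three pairwise-intersecting unit intervals by $n+2$ maximal anti-cliques and the one-dimensional Helly property by Helly's theorem in $\R^n$. Concretely, for each $n$ I would consider the formula
\[
\alpha_n\;=\;\bigwedge_{i=1}^{n+2}\Box^2 (p_i\leftrightarrow \neg \Di p_i)\;\wedge\; \bigwedge_{i=1}^{n+2} \Di^2 \Big(\bigwedge_{j\neq i}p_j\Big)\;\to\; \Di^2\Big(\bigwedge_{i=1}^{n+2} p_i\Big),
\]
and show $\alpha_n\in\Logg(\R^n)\setminus\Logg(\R^m)$ whenever $m>n$. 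Here the first conjunct of the antecedent forces each $v(p_i)$ to be a (nonempty) maximal anti-clique by \eqref{eqn: max anti-clique}; the second conjunct says that for each $i$ the remaining $n+1$ sets have a common point, and as $i$ ranges over $1,\dots,n+2$ these are exactly all the $(n+1)$-element subsets; while the consequent asserts that all $n+2$ sets meet. All these intersection statements are read through the universal modality $\Di^2$ of \eqref{eqn: universal in Dg}.

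Validity in $\R^n$ would follow from Helly's theorem. Given a valuation satisfying the antecedent, each $v(p_i)$ is a maximal anti-clique, hence a closed convex body; this convexity is precisely the content of the characterization of maximal anti-cliques as closed convex bodies of constant width~$1$. Having $n+2\ge n+1$ convex sets in $\R^n$, every $(n+1)$ of which intersect, Helly's theorem yields a point common to all of them, so the consequent holds and $\alpha_n$ is valid in $\R^n$.

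For the failure in $\R^m$ with $m>n$, note that $m\ge n+1$, so I would invoke Lemma~\ref{lem: exist balls} with $n+1$ in place of its $n$ and $m$ in place of its $k$: it produces $n+2$ closed balls of equal radius in $\R^m$, any $n+1$ of which meet while all $n+2$ are disjoint. Rescaling this configuration by a similarity preserves the intersection pattern (nonemptiness of an intersection of balls is invariant under simultaneous scaling of centers and radii) and lets me normalize the common radius to $\tfrac12$; since a closed ball of radius $\tfrac12$ is a maximal anti-clique, setting $v(p_i)$ equal to the $i$-th ball satisfies both conjuncts of the antecedent while refuting the consequent. Thus $\alpha_n\notin\Logg(\R^m)$, giving $\Logg(\R^n)\not\subseteq\Logg(\R^m)$. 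The final assertion is then immediate: if $m\ne n$, say $n<m$, the non-containment just established gives $\Logg(\R^n)\ne\Logg(\R^m)$.

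The main obstacle is the validity half, and specifically the appeal to convexity of \emph{arbitrary} maximal anti-cliques: Helly applies only once we know that every set $v(p_i)$ forced by \eqref{eqn: max anti-clique} is convex, and this is exactly the nontrivial identification of maximal anti-cliques with convex bodies of constant width~$1$, which must be secured before the argument is complete. The failure half is comparatively routine, reducing to Lemma~\ref{lem: exist balls} together with the elementary scale-invariance of the intersection pattern and the fact that balls of radius~$\tfrac12$ are maximal anti-cliques.
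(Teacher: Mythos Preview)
Your proposal is correct and matches the paper's proof essentially line for line: same formula $\alpha_n$, same appeal to Helly's theorem for validity in $\R^n$, same use of Lemma~\ref{lem: exist balls} (rescaled to radius $\tfrac12$) for failure in $\R^m$. Your one stated obstacle is overblown: you do not need the full constant-width characterization for the validity half, since the convex hull of an anti-clique is trivially again an anti-clique (distances between points of the hull are bounded by the diameter of the original set, which is at most~$1$), so convexity of maximal anti-cliques is a one-line observation---and this is exactly how the paper handles it.
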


\begin{proof}
As we observed earlier, for any dimension $k\in\N$, a closed ball of radius $\frac{1}{2}$ is a maximal anti-clique. If $k>1$, not every maximal anti-clique is a closed ball, but no matter, each maximal anti-clique is a convex set since the convex closure of an anti-clique is again such. So for any $n\in \N$, Helly's theorem says that a collection of $n+2$ maximal anti-cliques with the property that any $n+1$ of them have non-empty intersection has the property that all $n+2$ have non-empty intersection. Thus the following formula is valid in $\R^n$.
\[
\bigwedge_{i=1}^{n+2}\Box^2 (p_i\leftrightarrow \neg \Di p_i)\wedge \bigwedge_{i=1}^{n+2} \Di^2 (\bigwedge_{i\neq j}p_j)\,\,\to\,\, \Di^2(\bigwedge_{i=1}^{n+2}p_i)
\]

\vspace{-.5ex}

\noindent However, by Lemma~\ref{lem: exist balls}, for each $k\geq n+1$ there are $n+2$ closed balls of equal radius, which can obviously be chosen to be $\frac{1}{2}$, such that any $n+1$ of then contain a common point of intersection, but the intersection of all $n+2$ balls is empty. Since closed balls of radius $\frac{1}{2}$ are maximal anti-cliques, this formula fails in $\R^k$ for each $k>n$.  
\end{proof}

 
This theorem illustrates the expressive power of the language of farness in allowing us to make contact with Helly's theorem. We next illustrate an additional instance of the expressive power of this language related to certain subsets of $\R^n$ known as ``bodies of constant width'' that we now describe. 

A \emph{hyperplane} $H$ in $\R^n$ is a translation $v+U$ of a linear subspace $U$ of codimension 1. In this case, a unit vector that is a orthogonal to $U$  is called the \emph{direction} of $H$. 
Each hyperplane separates the space into two closed half-spaces. See e.g. \cite{Lay-convex}.
Suppose $K$ is a compact, convex set, or a \emph{body}. A hyperplane $H$ is a \emph{supporting hyperplane} of $K$ if $K$ lies entirely in one closed half-plane determined by $H$ and $H$ contains at least one boundary point of $K$. If $K$ has non-empty interior, then for each unit vector $u$, there are two distinct supporting hyperplanes of $K$ that have direction $u$, and the distance between these hyperplanes is called the \emph{width} of $K$ in direction~$u$. A body has constant width if the width in each direction is the same. By a \emph{unit body of constant width}, or \ts{ubcw}, we mean a compact convext set of constant width 1.  Clearly any closed  $n$-ball in $\R^n$ of radius $\frac{1}{2}$ is a \ts{ubcw}, but there are many others, such as the Reuleaux triangle in $\R^2$. For details see e.g. \cite{Martini-BCW}.
\Sp

\begin{proposition}
Maximal anti-cliques in $\R^n$ are exactly the \ts{ubcw}s.     
\end{proposition}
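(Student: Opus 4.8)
The plan is to recognize a maximal anti-clique as a \emph{complete} (diametrically maximal) set and then invoke the classical identification of such sets with bodies of constant width. Since ``far'' means distance exceeding~$1$, an anti-clique is exactly a set of diameter at most~$1$, and a point $p$ may be adjoined to an anti-clique $A$ precisely when $\|p-a\|\le 1$ for every $a\in A$; hence $A$ is a maximal anti-clique if and only if $A=\bigcap_{a\in A}\ol B(a,1)$, where $\ol B(x,r)$ denotes the closed ball. I would prove the two inclusions separately. For the direction that every \ts{ubcw} is a maximal anti-clique, first note that a body $K$ of constant width~$1$ has diameter~$1$: the diameter equals the maximum of the widths over all directions (if $x,y$ realize the diameter then $u=(x-y)/\|x-y\|$ has width at least $\|x-y\|$, while each width is at most the diameter), so constant width~$1$ makes $K$ an anti-clique. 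For maximality, let $p\notin K$ and let $q$ be the nearest point of $K$ to $p$; then $u=(p-q)/\|p-q\|$ is an outer normal at $q$, the opposite supporting hyperplane in direction $u$ lies at distance $1$, and any point $q'$ of $K$ on it satisfies $\|p-q'\|\ge\|p-q\|+1>1$, so $p$ is far from $q'$ and cannot be adjoined.

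For the converse I would begin with three reductions showing a maximal anti-clique $A$ is a full-dimensional compact convex body of diameter~$1$. It is closed, since passing to the closure preserves diameter, so $\ol A$ is an anti-clique containing $A$; it is bounded, hence compact; it is convex, since the convex hull preserves diameter, so $\mathrm{conv}(A)$ is an anti-clique containing $A$; and it is full-dimensional, for if $A$ lay in a proper affine subspace then its circumradius $r$ inside that subspace would be $<1$ (for instance by Jung's theorem), and with $c$ the circumcenter and $e$ a unit normal to the subspace the point $c+\sqrt{1-r^{2}}\,e$ could be adjoined, contradicting maximality. As above, $A$ then has diameter~$1$ and width at most~$1$ in every direction, so all that remains is to prove that each width equals~$1$.

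The key new ingredient is a \emph{partner lemma}: every boundary point $x$ of $A$ has a point $y\in A$ with $\|x-y\|=1$. Indeed, if every point of $A$ were within distance $\rho<1$ of $x$, then choosing an outer normal $u$ at $x$ and $\delta<1-\rho$, the point $x+\delta u\notin A$ would still satisfy $\|(x+\delta u)-a\|\le\rho+\delta<1$ for all $a\in A$ and could be adjoined, contradicting maximality. A short computation shows such a chord $[x,y]$ is a \emph{double normal}: if $x-y$ failed to be an outer normal at $x$, some $a\in A$ would satisfy $\|a-y\|>\|x-y\|=1$, exceeding the diameter. Consequently, at a boundary point $x$ whose outer normal $u$ is unique, the partner is $y=x-u$, so $\min_{a\in A}\langle a,u\rangle\le\langle x,u\rangle-1$ and the width in direction $u$ equals~$1$.

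The main obstacle is precisely the passage from ``width~$1$ in the normal direction of each boundary point'' to ``width~$1$ in every direction.'' The outer normals occurring at corners of $A$ — such as the vertices of a Reuleaux triangle, where the normal cone is a full sector — need not arise as normals at any smooth boundary point, so a naive density-plus-continuity argument does not reach these directions, even though the width there must still be~$1$. Resolving this requires genuine convex-geometric input: one must show that for a corner $q$ and each unit $u$ in its normal cone the specific partner $q-u$ still lies in $A$, equivalently $A\subseteq\ol B(q-u,1)$. This is exactly the classical equivalence between diametric completeness and constant width, and I would either carry out the corner analysis directly or invoke it from the theory of complete bodies \cite{Martini-BCW}.
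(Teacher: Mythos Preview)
Your proof is correct but takes a longer route than the paper's. Both arguments handle ``\ts{ubcw} $\Rightarrow$ maximal anti-clique'' in essentially the same way: constant width~$1$ forces diameter~$1$, and for $p\notin K$ a supporting hyperplane together with width~$1$ produces a point of $K$ at distance greater than~$1$ from $p$.

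For the converse, the paper sidesteps your corner obstacle entirely by invoking the \emph{completion theorem}: every bounded set of diameter $d$ is contained in a body of constant width~$d$. Once the first direction is established, this finishes the argument in one line --- a maximal anti-clique $M$ has diameter $\le 1$, hence sits inside some \ts{ubcw} $K$; but $K$ is itself a maximal anti-clique, so $M=K$. Your approach instead tries to verify constant width directly via the partner lemma, which succeeds at smooth boundary points but, as you correctly identify, does not immediately cover directions lying in the normal cone of a corner; you are then forced to cite the completeness--constant-width equivalence, which is essentially the proposition itself once one unpacks the definitions. The paper's device trades all of that analysis for a single application of the completion theorem and never needs full-dimensionality. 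Your preliminary reductions and the partner lemma are correct and instructive, but the completion-theorem shortcut is the cleaner path here.
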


\begin{proof}
Note that if $M$ is a maximal anti-clique, then it is easily seen that $M$ is convex. It is also closed and bounded, hence compact. We make use of two facts. First,  a compact convex set $K$ is a \ts{ubcw} iff $K-K=\{x-y\mid x,y\in K\}$ is a closed ball of radius 1 \cite[Thm.~1]{Chakerian}. 
For the second fact, recall that the diameter of a bounded set is the supremum of the distances between points in it. Thus, the diameter of a body of constant width is its width. The second fact that we use is that any bounded set of diameter $d$ is a subset of a body of constant width with diameter $d$, and hence also of a body of constant width with any chosen diameter $\geq d$
\cite[Thm.~11.4]{conv-discr}.
Surely a maximal anti-clique has diameter at most 1. So, our result follows if we show that every \ts{ubcw} $K$ is a maximal anti-clique. Since $K-K$ is a ball of radius 1, we have that $\|x-y\|\leq 1$ for each $x,y\in K$, so $K$ is an anti-clique. Suppose $z\not\in K$. A basic fact of convex geometry is that there is a supporting hyperplane of $K$ with $K$ in one of its half-planes and $z$ in the other \cite{Lay-convex}. 
It follows that $z$ is distance $>1$ from some point in $K$, thus $K$ is a maximal anti-clique. 
\end{proof}

Call a family $\mathcal{M}=M,M_1,\ldots,M_k$ of \ts{ubcw}s in $\R^n$ a special family if $M_1,\ldots,M_k$ are pairwise disjoint and all intersect $M$ non-trivially, and call $k$ the size of the special family $\mathcal{M}$. The union of a special family has diameter at most 3 and it is known that there is a lower bound on the volume of a \ts{ubcw} in $\R^n$. Thus, there is a maximum value $\sigma(n)$ of the size of a special family in $\R^n$. 

Consider for each $k\in\N$ the following formulas in the $\Dg$ modality

\[
\psi_k\,=\, (p\leftrightarrow \neg\Di p)\, \wedge\bigwedge_{i=1}^k (p_i\leftrightarrow\neg\Di p_i)
\quad\mbox{and}\quad
\chi_k\,=\, \bigwedge_{i=1}^k\Di^2(p\wedge p_i)
\]
\vspace{1ex}

\noindent Suppose $v$ is a valuation for $\R^n$ and let $v(p)=M$ and $v(p_i)=M_i$ for $i\leq k$. By item~(\ref{eqn: universal in Dg}) on page~\pageref{eqn: universal in Dg} we have that $\ol{v}(\Box^2\psi_n)$ and $\ol{v}(\chi_n)$ are either empty or all of $\R^n$. By item~(\ref{eqn: max anti-clique}) on page~\pageref{eqn: max anti-clique} we have $\ol{v}(\Box^2\psi_k)=\R^n$ iff $M$ and each $M_1,\ldots,M_k$ are \ts{ubcw}s, and $\ol{v}(\chi_k)=\R^n$ iff each $M_i$ intersects $M$ non-trivially. So, if $k>\sigma(n)$, the \ts{ubcw} $M_1,\ldots,M_k$ cannot be pairwise disjoint, so the following formula $\phi_k$ is valid in $\R^n$

\[
\phi_k\,=\, \Box^2\psi_k\wedge\chi_k\,\to \bigvee_{i\neq j}\Di(p_i\wedge p_j)
\]
\vspace{1ex}

\noindent Conversely, if $k\leq\sigma(n)$, there is a special family $M,M_1,\ldots,M_k$ in $\R^n$, and setting $v(p)=M$ and $v(p_i)=M_i$ for $i\leq k$ provides a valuation for $\R^n$ for which the premise of $\phi_k$ holds but the conclusion does not. Thus, we have the following
\Sp

\begin{prp}
\label{prp: stugg}
$\R^n\models\phi_k$ iff $\sigma(n)<k$.    
\end{prp}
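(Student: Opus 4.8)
The plan is to reduce validity of $\phi_k$ to a statement about configurations of \ts{ubcw}s and then read it off the definition of $\sigma(n)$. The key feature I would exploit is that, under any valuation $v$, both the antecedent $\Box^2\psi_k\wedge\chi_k$ and the consequent are \emph{universal}: by (\ref{eqn: universal in Dg}) every subformula of the form $\Box^2(\cdot)$ or $\Di^2(\cdot)$ takes only the values $\emptyset$ and $\R^n$. Writing $v(p)=M$ and $v(p_i)=M_i$, I would first assemble the dictionary already prepared above: by (\ref{eqn: max anti-clique}) together with the characterization of maximal anti-cliques as \ts{ubcw}s, $\ol v(\Box^2\psi_k)=\R^n$ holds exactly when $M,M_1,\dots,M_k$ are all \ts{ubcw}s, while $\ol v(\chi_k)=\R^n$ holds exactly when each $M_i$ meets $M$; and (reading the consequent at modal depth two, so that (\ref{eqn: universal in Dg}) applies) the consequent equals $\R^n$ precisely when $M_i\cap M_j\neq\emptyset$ for some $i\neq j$, and equals $\emptyset$ otherwise.

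With this dictionary the argument becomes a single biconditional. Indeed $\R^n\not\models\phi_k$ holds iff some valuation makes the antecedent $\R^n$ and the consequent $\emptyset$, i.e.\ iff there exist \ts{ubcw}s $M,M_1,\dots,M_k$ with every $M_i$ meeting $M$ and with $M_1,\dots,M_k$ pairwise disjoint --- that is, iff a special family of size $k$ exists. Since a special family of size $\sigma(n)$ restricts (by keeping $M$ and any $k$ of the $M_i$) to one of every smaller size, a special family of size $k$ exists iff $k\leq\sigma(n)$. Hence $\R^n\not\models\phi_k$ iff $k\leq\sigma(n)$, which is exactly $\R^n\models\phi_k$ iff $\sigma(n)<k$. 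The two ``endpoints'' of this chain are the constructions already sketched: for $\sigma(n)<k$ the antecedent can never be satisfied by pairwise disjoint bodies, forcing the consequent to hold; for $\sigma(n)\geq k$ a special family yields an explicit falsifying valuation.

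I expect the only real work to be the semantic translation of the first paragraph, since the rest is bookkeeping. Two points deserve care. First, the consequent must be understood as the universal (depth-two) assertion that \emph{some} pair $M_i,M_j$ overlaps; this is what makes it all-or-nothing via (\ref{eqn: universal in Dg}) and lets it record the single global fact needed, rather than point-dependent information about how far a witness lies from a given $x$. Second, the statement presupposes that $\sigma(n)$ is a well-defined finite maximum, so that the case split on $k$ is exhaustive; this is guaranteed by the observations already made, namely that the union of a special family has diameter at most $3$ and that a \ts{ubcw} in $\R^n$ has volume bounded below, so only boundedly many pairwise disjoint \ts{ubcw}s can meet $M$.
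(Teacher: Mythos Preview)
Your argument is the same as the paper's, which gives the proof in the paragraph immediately preceding the proposition rather than as a separate proof environment.

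One point is worth making explicit, though: your parenthetical ``reading the consequent at modal depth two'' is doing essential work, not merely recording a convenience. As $\phi_k$ is literally displayed in the paper, its consequent is $\bigvee_{i\neq j}\Di(p_i\wedge p_j)$ with a \emph{single} $\Di$, and with that reading the proposition is false. For instance, set $M=M_1=\cdots=M_k$ all equal to the same closed ball $B$ of radius $\tfrac12$ (any $k\ge 2$, any $n$); then $\neg\Di B=B$, so $\Box^2\psi_k$ and $\chi_k$ hold everywhere, while the consequent evaluates to $\Di B=\R^n\setminus B$ and hence fails at the centre of $B$ --- regardless of whether $k>\sigma(n)$. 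The paper's own argument (``the $M_i$ cannot be pairwise disjoint, so $\phi_k$ is valid'') tacitly treats the consequent as universal too, so both your proof and the paper's require the consequent to be $\bigvee_{i\neq j}\Di^2(p_i\wedge p_j)$. You are right to read it that way; just be aware that this is a correction to the displayed formula, not an interpretation of it.
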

\Sp

A particular instance of a special family is the case of a unit sphere $M$ in $\R^n$ and pairwise disjoint unit spheres $M_1,\ldots,M_k$ that touch $M$ but do not overlap it. The maximum number of such spheres touching $M$ is known as the \emph{kissing number} $\kappa(n)$ for spheres in $\R^n$. Clearly $\kappa(n)\leq\sigma(n)$. Thus, our language allows us to provide a bound on the kissing number. 
It is known that  $\kappa(n)$ is an increasing sequence 
converging to $\inf$ \cite{Shannon} (but it is unknown if it is strictly increasing). This is another geometric route to show
that there are infinitely many distinct logics $\Logg(\R^n)$, a weaker version of Theorem \ref{thm: one non-containment}.

\subsection{Nearness logics} The \emph{nearness logic} has the single modality $\Dl$. Since this sub-section will treat only this modality, we write it simply as $\Di$, and write $x\,R\,y$ iff $d(x,y)<1$. We say $x,y$ are \emph{close} if $x\,R\,y$. For a metric space $X$ we write $\Logl(X)$ for the modal formulas with this modality that are valid in $X$. 

A \emph{weak anti-clique} in $\R^n$ is a set of points such that no distinct pair of them is close, thus any two distinct elements are distance at least 1 apart. Weak anti-cliques can be infinite, but by a simple volume argument, any weak anti-clique that is contained in an open ball of radius 1 in $\R^n$ must be finite. 
\Sp

\begin{definition}
Let $M_n$ be the maximal cardinality of a weak anti-clique that is contained in an open ball of radius 1 in $\R^n$.   
\end{definition}
\Sp

Let $X$ be a weak anti-clique of cardinality $M_n$ that is contained in the open ball of radius 1 centered at the origin in $\R^n$. Then the origin does not belong to~$X$ since that would imply that the origin is its only element and there are such weak anti-cliques with 2 elements. So there is an $\epsilon$-neighborhood of the origin that does not contain any points of $X$. Embedding $X$ into the hyperplane of $\R^{n+1}$ spanned by the first $n$ standard basis vectors gives a weak anti-clique of $\R^{n+1}$ that is contained in the unit ball centered at the origin. We can add a point $(0,\ldots,0,1-\lambda)$, for some $\lambda>0$, and get a strictly larger weak anti-clique that is contained in the open unit ball centered at the origin of $\R^{n+1}$. Thus $1<M_n<M_{n+1}$ for each $n\geq 1$. 
\Sp

\begin{proposition}
\label{prp: max in ball}
$\Logl(\R^n)$ strictly contains $\Logl(\R^{n+1})$ for each $n\in\N$.    
\end{proposition}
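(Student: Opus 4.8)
The plan is to establish the two inclusions separately: first that $\Logl(\R^{n+1})\subseteq\Logl(\R^n)$, and then that this inclusion is strict by exhibiting a formula valid in $\R^n$ but refutable in $\R^{n+1}$.

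For the inclusion I would use the coordinate projection $\pi\colon\R^{n+1}\to\R^n$ deleting the last coordinate, and check that it is a surjective p-morphism of the nearness frames. The \emph{forth} condition is immediate because projections do not increase distance: if $d(x,y)<1$ then $d(\pi x,\pi y)\le d(x,y)<1$. For the \emph{back} condition, given $x\in\R^{n+1}$ and $z\in\R^n$ with $d(\pi x,z)<1$, the point $y=(z,x_{n+1})$ obtained by appending the last coordinate of $x$ to $z$ satisfies $\pi y=z$ and $d(x,y)=d(\pi x,z)<1$, so $x\,R\,y$. Since $\pi$ is also onto and validity is preserved by surjective p-morphisms, this gives $\Logl(\R^{n+1})\subseteq\Logl(\R^n)$.

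For strictness, the key observations are that the nearness relation is reflexive, since $d(x,x)=0<1$, and that the open ball of radius $1$ about a point $c$ is exactly $R(c)$. For each $k\in\N$ I would consider the formula
\[
\alpha_k \,=\, \bigwedge_{i=1}^{k}\Di\Bigl(p_i\wedge\bigwedge_{j\neq i}\Box\neg p_j\Bigr),
\]
and claim that $\alpha_k$ is satisfiable in $\R^m$ if and only if some open ball of radius $1$ in $\R^m$ contains a weak anti-clique of size $k$, i.e.\ iff $k\le M_m$. Indeed, if $\alpha_k$ holds at a point $c$ under a valuation, then for each $i$ there is $x_i\in R(c)$ satisfying $p_i$ at which $\Box\neg p_j$ holds for every $j\ne i$; reflexivity of $R$ forces $x_i$ itself to falsify each such $p_j$, so the $x_i$ are pairwise distinct, and $\Box\neg p_j$ at $x_i$ forbids any point close to $x_i$ from satisfying $p_j$, so in particular $x_i$ is not close to $x_j$. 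Thus $\{x_1,\dots,x_k\}$ is a weak anti-clique of size $k$ inside the open unit ball $R(c)$. Conversely, any weak anti-clique $\{x_1,\dots,x_k\}\subseteq R(c)$ yields a satisfying valuation by setting $v(p_i)=\{x_i\}$.

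Finally, taking $k=M_n+1$ and $\varphi=\neg\alpha_{M_n+1}$, the definition of $M_n$ makes $\alpha_{M_n+1}$ unsatisfiable in $\R^n$, so $\R^n\models\varphi$; while the inequality $M_n<M_{n+1}$ noted above provides a weak anti-clique of size $M_n+1$ in an open unit ball of $\R^{n+1}$, so $\alpha_{M_n+1}$ is satisfiable there and $\R^{n+1}\not\models\varphi$. Hence $\varphi\in\Logl(\R^n)\setminus\Logl(\R^{n+1})$, and the inclusion is strict. I expect the only delicate point to be the bookkeeping in the equivalence between satisfiability of $\alpha_k$ and the existence of a weak anti-clique; in particular, the use of reflexivity of $R$ to guarantee that the witnesses $x_1,\dots,x_k$ are genuinely distinct is what lets the purely modal formula faithfully encode the counting argument, after which everything reduces to the already-noted geometric fact $M_n<M_{n+1}$.
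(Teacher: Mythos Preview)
Your proof is correct and follows essentially the same route as the paper: the same coordinate projection serves as the surjective p-morphism for the inclusion, and your separating formula $\neg\alpha_k$ coincides with the paper's (since $\Box\neg p_j$ is just $\neg\Di p_j$). The only cosmetic difference is that you take $k=M_n+1$ while the paper takes $k=M_{n+1}$; both lie in the required range $M_n<k\le M_{n+1}$, so either choice works.
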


\begin{proof}
Let $f:\R^{n+1}\to \R^n$ take $(x_1,\ldots,x_{n+1})$ to $(x_1,\ldots,x_n)$. It is easily seen that this is a surjective p-morphism, giving the containment desired. To show that this containment is strict, consider the following formulas in the variables $p_1,\ldots,p_k$ where $k=M_{n+1}$. 
\[
\psi_i\, = \, p_i\wedge\bigwedge_{j\neq i}\neg\Di\,p_j\quad\mbox{ and }\quad \phi\,=\, \neg\bigwedge_{i=1}^k\Di\psi_i
\]

We first show that $\phi$ is not valid in $\R^{n+1}$. Let $x_1,\ldots,x_k$ be a weak anti-clique in the open unit ball of $\R^{n+1}$ centered at the origin $o$. Choose a valuation $v$ so that for each $i\leq k$ we have  $v(p_i)=\{x_i\}$. Since $x_i$ is distance at least 1 from $x_j$ for $j\neq i$, we have that $x_i$ belongs to $\ol{v}(\psi_i)$, and as $x_i$ is in the open unit ball centered at the origin, we have that $o$ belongs to $\ol{v}(\Di\psi_i)$ for each $i\leq k$. Thus $\psi$ is not valid. 

To see that $\phi$ is valid in $\R^n$, we must show that for any valuation $v$, the set $\bigcap\{\,\ol{v}(\Di\psi_i)\mid i\leq k\}$ is empty. Suppose to the contrary that some $x$ belongs to this set. Then for each $i\leq k$ there is an element $x_i$ in $\ol{v}(\psi_i)$ that is close to $x$. Then for $i\neq j$ we have $x_i\in\ol{v}(p_i)$, $x_j\in\ol{v}(p_j)$, and $x_i\not\in\ol{v}(\Di p_j)$. So $x_i$ is distance at least 1 from everything in $\ol{v}(p_j)$, and in particular, $x_i$ is distance at least 1 from $x_j$. Thus $x_1,\ldots,x_k$ is a weak anti-clique in the open ball of radius 1 centered at $x$. But $k=M_{n+1}>M_n$, contrary to the definition of $M_n$. Thus $\phi$ is valid in $\R^n$. 
\end{proof}

\begin{remark}
\label{rmk: <=1}
Before concluding this sub-section, we make an observation that will be used in the following subsection. Suppose that for the preceding proof, we were considering the modality $\Dle$ rather than $\Dl$. We could let $\ol{M}_n$ be the maximal cardinality of a weak anti-clique that is contained in a closed unit ball in $\R^n$. Here a weak anti-clique is a collection of points $x_1,\ldots,x_n$ such that $d(x_i,x_j)>1$ for $i\neq j$. The argument given above shows that each $\ol{M}_n$ is finite and $1<\ol{M}_n<\ol{M}_{n+1}$. Let $k=\ol{M}_{n+1}$ and consider the same formulas $\psi_i$ and $\phi$ as in the proof of Proposition~\ref{prp: max in ball}. Then, with obvious modifications, the proof of Proposition~\ref{prp: max in ball} shows that $\phi$ is valid in $\Logle(\R^n)$ and fails in $\Logle(\R^{m})$ for each $m>n$.  
\end{remark}

\subsection{Logics of fixed distance} The \emph{fixed distance logic} has $\De$ as its single modality. Since this sub-section will treat exclusively this modality, we write it simply as $\Di$, and write $x\,R\,y$ iff $d(x,y)=1$. For a metric space $X$ we write $\Loge(X)$ for the modal formulas with this modality that are valid in $X$. We begin with the following observation.
\Sp

\begin{lemma}
\label{lem: dist 2}
In $\R^n$ for $n>1$ we have that $\Di^2$ is the operator $\Di_{\scaleto{\leq}{3.5pt}\,2}$. 
\end{lemma}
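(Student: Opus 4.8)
The plan is to recast the statement as an identity between binary relations and then verify that identity directly. Write $R$ for the distance-$1$ relation $\{(x,y)\mid d(x,y)=1\}$ underlying $\Di$. As operators on $\clP(\R^n)$, $\Di^2$ is the diamond of the composite relation $R\circ R$, while $\Di_{\le 2}$ is the diamond of $R_{\le 2}=\{(x,z)\mid d(x,z)\le 2\}$. A diamond determines its relation: evaluating it at a singleton $\{z\}$ returns the set of $R$-predecessors of $z$, so two diamonds agree exactly when their relations agree. Hence it suffices to prove $R\circ R=R_{\le 2}$, and since $R$ is symmetric the order of composition is immaterial.

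One inclusion, $R\circ R\subseteq R_{\le 2}$, is immediate from the triangle inequality: if $d(x,y)=1$ and $d(y,z)=1$ then $d(x,z)\le d(x,y)+d(y,z)=2$. The reverse inclusion $R_{\le 2}\subseteq R\circ R$ is the substance. Given $x,z$ with $t:=d(x,z)\le 2$, I must produce a $y$ at distance exactly $1$ from each, that is, a common point of the unit spheres centered at $x$ and $z$. If $x=z$, take $y=x+u$ for any unit vector $u$. Otherwise choose coordinates so that $x$ is the origin and $z=t\,e_1$, and, using that $n\ge 2$, set
\[
y\,=\,\tfrac{t}{2}\,e_1+\sqrt{1-\tfrac{t^2}{4}}\;e_2.
\]
A direct computation gives $\|y\|=1$ and $\|y-z\|=1$, where the square root is real precisely because $t\le 2$.

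I do not expect a genuine obstacle: the only nontrivial content is the existence of an intersection point of two unit spheres whose centers lie at distance at most $2$, and the displayed formula settles it. The one delicate point worth flagging is exactly where the hypothesis $n>1$ is used: the witness $y$ is placed off the line through $x$ and $z$, which requires a second coordinate direction $e_2$. In $\R$ that direction is unavailable, the second coordinate is forced to vanish, and the defining equations are solvable only when $t=2$; thus in dimension $1$ the relation $R\circ R$ collapses to the distance-exactly-$2$ relation rather than $R_{\le 2}$, which is why the lemma is stated for $n>1$. The remaining work — the relation-to-operator reduction and the triangle-inequality inclusion — is routine.
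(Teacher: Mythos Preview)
Your proof is correct and follows essentially the same approach as the paper: both reduce the operator identity to the relation identity $R\circ R=R_{\le 2}$, with the substantive step being the existence of a point at distance exactly $1$ from each of two points at distance at most $2$. Your version is simply more explicit, giving a coordinate construction of the witness and spelling out where the hypothesis $n>1$ enters, whereas the paper just asserts that the two unit spheres meet.
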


\begin{proof}
If $x,y\in\R^n$ with $d(x,y)\leq 2$, the the balls of radius 1 centered at $x$ and $y$ have a point $z$ in common. It follows that $R^2=\{(x,y)\mid d(x,y)\leq 2\}$.      
\end{proof}

\begin{prp}
$\Loge(\R)$ is incomparable to each $\Loge(\R^n)$ for $n>1$.     
\end{prp}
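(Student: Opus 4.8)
The plan is to prove incomparability by exhibiting two formulas: one in $\Loge(\R^n)\setminus\Loge(\R)$ and one in $\Loge(\R)\setminus\Loge(\R^n)$. Everything hinges on the contrast between the squared relation $R^2$ in the two settings. By Lemma~\ref{lem: dist 2}, in $\R^n$ with $n>1$ we have $R^2=\{(x,y)\mid d(x,y)\le 2\}$, so $R\subseteq R^2$ and $R^2$ is a fat reflexive ``distance $\le 2$'' relation. In $\R$, by contrast, $R(x)=\{x-1,x+1\}$ and $R^2(x)=\{x-2,x,x+2\}$; thus $R\cap R^2=\emptyset$ (edges have odd offset, $R^2$-edges even offset) and every $R^2$-ball has only three elements.

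For $\Loge(\R^n)\not\subseteq\Loge(\R)$ I would use the formula $\Di p\to\Di^2 p$. It is valid in $\R^n$ for $n>1$: since $R\subseteq R^2$, any $R$-successor witnessing $\Di p$ already witnesses $\Di^2 p$. (Concretely, for $d(x,y)=1$ the two unit spheres about $x$ and $y$ meet, giving a common neighbour, which is exactly $R\subseteq R^2$.) It fails in $\R$: take $v(p)=\{1\}$; then $\Di p$ holds at $0$ since $1\in R(0)$, while $\Di^2 p$ fails at $0$ since $R^2(0)=\{-2,0,2\}$ omits $1$. Hence $\Di p\to\Di^2 p\in\Loge(\R^n)\setminus\Loge(\R)$.

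For the reverse inclusion $\Loge(\R)\not\subseteq\Loge(\R^n)$ I would transport a packing formula through $\Di^2$. Put $\psi_i=p_i\wedge\bigwedge_{j\ne i}\neg\Di^2 p_j$ and $\Theta=\neg\bigwedge_{i=1}^{3}\Di^2\psi_i$, a formula in the $\De$-modality. The premise $\bigwedge_i\Di^2\psi_i$ can hold at a point $c$ only if $R^2(c)$ contains three points that are pairwise non-$R^2$-adjacent, and the reflexivity of $R^2$ forces these three witnesses to be distinct: if two witnesses coincided, that point would carry some $p_j$ while being its own $R^2$-successor, contradicting the conjunct $\neg\Di^2 p_j$. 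In $\R$ this is impossible, since $R^2(c)=\{c-2,c,c+2\}$ has maximal $R^2$-anti-clique $\{c-2,c+2\}$ of size $2$; so $\Theta$ is valid in $\R$. In $\R^n$ with $n>1$ it fails: by Lemma~\ref{lem: dist 2} together with scaling, $\Di^2$ is the ``distance $\le 2$'' modality, and the strict monotonicity $\ol M_1<\ol M_2$ of Remark~\ref{rmk: <=1} gives $\ol M_n\ge 3$, i.e.\ three points pairwise at distance $>2$ within distance $2$ of a common centre $c$ — explicitly, the vertices of an equilateral triangle of circumradius $r$ with $2/\sqrt3<r<2$. Valuing the $p_i$ as the corresponding singletons refutes $\Theta$ at $c$, and embedding this planar configuration into $\R^n$ handles all $n>1$.

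The main obstacle is this reverse inclusion. The two delicate points are: (i) verifying that $\Theta$ is genuinely valid in $\R$, which needs the reflexivity-of-$R^2$ trick to guarantee three \emph{distinct} witnesses rather than the possibly coinciding values permitted by a three-element $R^2$-ball; and (ii) ensuring $\Theta$ fails in every $\R^n$ with $n>1$, not merely in $\R^2$, which follows from the monotonicity of the packing numbers $\ol M_n$ supplied by Remark~\ref{rmk: <=1}. By comparison the forward inclusion is immediate once one notes $R\subseteq R^2$ in dimension $>1$ against $R\cap R^2=\emptyset$ in dimension $1$.
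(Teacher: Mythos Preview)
Your proof is correct. For the direction $\Loge(\R^n)\not\subseteq\Loge(\R)$ you use exactly the paper's formula $\Di p\to\Di\Di p$, so there is nothing to compare there.

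For $\Loge(\R)\not\subseteq\Loge(\R^n)$ you take a genuinely different route. The paper exploits directly that in $\R$ the set $R(x)=\{x-1,x+1\}$ has only two elements, so the pigeonhole formula
\[
\bigwedge_{i=1}^{3}\Di p_i\;\to\;\bigvee_{i\neq j}\Di(p_i\wedge p_j)
\]
is valid in $\R$ and refuted in $\R^n$ by any three distinct points on a unit sphere. This is shorter and entirely self-contained: no appeal to Remark~\ref{rmk: <=1}, no analysis of $R^2$-anti-cliques, and no need for the reflexivity trick to force distinct witnesses. Your formula $\Theta$ instead works one level up, through $\Di^2$, and recycles the packing argument from the nearness section; this is heavier but has the virtue of showing explicitly how the $\Dle$ machinery transfers to the $\De$ setting via Lemma~\ref{lem: dist 2}. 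Both arguments are sound; the paper's is the more economical one here.
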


\begin{proof}
We first show that $\R^n$ satisfies the following formula iff $n=1$.  
\[
\bigwedge^3_{i=1} \Di p_i\,\to\, \bigvee_{i\neq j}\Di(p_i\wedge p_j)
\]
Suppose $v$ is a valuation for $\R$ and $x$ belongs to $\ol{v}(\Di\, p_i)$ for $i=1,2,3$. Then one of $x+1,x-1$ must belong to each $v(p_i)$, hence one of these belongs to $v(p_i),v(p_j)$ for some distinct $i,j$. It follows that this formula is valid in $\R$. In $\R^n$ for $n\geq 2$ we can find distinct points $x_1,x_2,x_3$ all distance 1 from the origin $o$. Choosing a valuation with $v(p_i)=\{x_i\}$ for $i=1,2,3$ yields that the origin belongs to $\ol{v}(\Di\,p_i)$ for each $i$, but  $\ol{v}(\Di(p_i\wedge p_j))$ is empty for $i\neq j$. So this formula is not valid in $\R^n$ for $n>1$. 

Next, consider the formula 
\[
\Di p\to\Di\Di p
\]

\noindent By Lemma~\ref{lem: dist 2}, this is valid in $\R^n$ for $n>1$, and is clearly not valid in $\R$. 
\end{proof}

In Remark~\ref{rmk: <=1} we observed that there is a formula in the $\Di_{\scaleto{\leq}{3.5pt}\,1}$ modality that is valid in $\R^n$ but not in any $\R^m$ for $m>n$. Clearly the same is true if we replace the modality $\Di_{\scaleto{\leq}{3.5pt}\,1}$ with $\Di_{\scaleto{\leq}{3.5pt}\,2}$. Thus, by Lemma~\ref{lem: dist 2}, we have the following:
\Sp

\begin{prp}
$\Loge(\R^n)$ is not contained in $\Loge(\R^m)$ for $2\leq n<m$.     
\end{prp}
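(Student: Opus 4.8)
The plan is to transfer the separating formula of Remark~\ref{rmk: <=1} from the $\Di_{\scaleto{\leq}{3.5pt}\,1}$ signature into the fixed-distance signature by way of Lemma~\ref{lem: dist 2}. The key observation is that Lemma~\ref{lem: dist 2} identifies the composite $\Di^2 = \De\De$ with the operator $\Di_{\scaleto{\leq}{3.5pt}\,2}$ in every $\R^d$ with $d > 1$; so a formula built from $\Di_{\scaleto{\leq}{3.5pt}\,2}$ can be rewritten syntactically as a formula in the $\De$ modality with identical truth values, provided we stay in dimension at least $2$.

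First I would record the $\Di_{\scaleto{\leq}{3.5pt}\,2}$ version of the separating formula. By scale invariance of $\R^d$, the maximal cardinality $\ol{M}_n$ of a weak anti-clique (points pairwise more than distance $2$ apart) inside a closed ball of radius $2$ equals the constant appearing in Remark~\ref{rmk: <=1}, and the argument there, applied to radius-$2$ balls, yields a formula
\[
\phi \,=\, \neg\bigwedge_{i=1}^{k}\Di_{\scaleto{\leq}{3.5pt}\,2}\psi_i,
\qquad
\psi_i \,=\, p_i\wedge\bigwedge_{j\neq i}\neg\Di_{\scaleto{\leq}{3.5pt}\,2}\,p_j,
\]
with $k = \ol{M}_{n+1}$, that is valid in $\R^n$ under the $\Di_{\scaleto{\leq}{3.5pt}\,2}$ modality but fails in every $\R^m$ with $m > n$.

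Next I would perform the substitution: replace each occurrence of $\Di_{\scaleto{\leq}{3.5pt}\,2}$ in $\phi$ by $\Di^2$, obtaining a formula $\phi'$ in the $\De$ modality. By Lemma~\ref{lem: dist 2}, for every $d \geq 2$ the operator $\Di^2$ computed with $\De$ coincides with $\Di_{\scaleto{\leq}{3.5pt}\,2}$, so $\R^d \models \phi'$ in the fixed-distance semantics if and only if $\R^d$ validates the $\Di_{\scaleto{\leq}{3.5pt}\,2}$ formula $\phi$. Since the hypothesis $2 \leq n < m$ forces both dimensions to exceed $1$, this gives $\R^n \models \phi'$ while $\R^m \not\models \phi'$, whence $\Loge(\R^n) \not\subseteq \Loge(\R^m)$.

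The argument is essentially bookkeeping once the two cited results are in hand, so I do not expect a genuine obstacle. The one point requiring care is the dimension restriction: Lemma~\ref{lem: dist 2} fails in $\R$, so the equivalence between $\Di^2$ and $\Di_{\scaleto{\leq}{3.5pt}\,2}$ --- and hence the legitimacy of the whole translation --- holds only when both $n$ and $m$ are at least $2$, which is exactly why the statement excludes dimension~$1$.
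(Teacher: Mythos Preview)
Your proposal is correct and follows essentially the same approach as the paper: invoke Remark~\ref{rmk: <=1} to obtain a separating formula in the $\Di_{\scaleto{\leq}{3.5pt}\,1}$ modality, rescale to $\Di_{\scaleto{\leq}{3.5pt}\,2}$, and then use Lemma~\ref{lem: dist 2} to rewrite $\Di_{\scaleto{\leq}{3.5pt}\,2}$ as $\De\De$ in dimensions at least~$2$. The paper states this more tersely, but the content is identical.
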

\Sp

The previous propositions show that the logics $\Loge(\R^n)$ are all distinct. We suspect that they form an antichain. For this, we need to show $\Loge(\R^m)$ is not contained in $\Loge(\R^n)$ for $2\leq n<m$. We provide the result for $n=2$.
\Sp

\begin{prp}
$\Loge(\R^m)$ is not contained in $\Loge(\R^2)$ for $2<m$.  
\end{prp}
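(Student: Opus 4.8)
The plan is to produce a single formula $\phi_m$ that is valid in $\R^m$ but refutable in $\R^2$, thereby witnessing $\Loge(\R^m)\not\subseteq\Loge(\R^2)$. The idea is to encode, in the $\Di$ (distance exactly $1$) language, a proper coloring of the \emph{link} of a point, i.e.\ of the set of points lying at distance exactly $1$ from a fixed point. Since $\Box\alpha$ holds at $x$ precisely when $\alpha$ holds at every point of the unit sphere centered at $x$, the formula
\[
\phi_m \;=\; \Box\Big(\bigvee_{i=1}^{m-1} p_i\Big)\;\wedge\;\bigwedge_{i=1}^{m-1}\Box\big(p_i\to\neg\Di p_i\big)\;\longrightarrow\;\bot
\]
asserts, at a point $x$, that the link of $x$ cannot be partitioned into $m-1$ sets $P_1,\dots,P_{m-1}$, each an independent set of the unit‑distance graph: the first conjunct says that every point at distance $1$ from $x$ satisfies some $p_i$ (the classes cover the link), and each clause $\Box(p_i\to\neg\Di p_i)$ says that no $P_i$ contains two points at distance $1$ (each class is independent).

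First I would establish validity in $\R^m$. As recalled earlier, a regular unit $m$-simplex embeds in $\R^m$, and taking $x$ to be one of its vertices exhibits $m$ points on the unit sphere about $x$ that are pairwise at distance $1$; that is, the link of $x$ contains an $m$-clique of the unit‑distance graph. Under any valuation, if the premise of $\phi_m$ held at $x$, then these $m$ clique points would be colored by only $m-1$ of the $p_i$, so by the pigeonhole principle two of them, say $y$ and $y'$, would satisfy the same $p_i$; but $d(y,y')=1$, so $\Di p_i$ holds at $y$ while $p_i$ also holds at $y$, contradicting $\Box(p_i\to\neg\Di p_i)$ at $x$. Hence the premise is unsatisfiable and $\phi_m$ is valid in $\R^m$.

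Next I would refute $\phi_m$ in $\R^2$, and this is where the special geometry of the plane enters. The link of a point $x\in\R^2$ is the unit circle about $x$, and two of its points are at distance $1$ exactly when the central angle between them is $60^\circ$. Consequently the unit‑distance graph induced on this circle is a disjoint union of $6$-cycles (the orbits of rotation by $60^\circ$), and in particular it is bipartite, hence properly $2$-colorable. I would fix such a $2$-coloring of the circle, set $v(p_1),v(p_2)$ equal to its two color classes and $v(p_i)=\emptyset$ for $i>2$, and leave every point off the circle (including $x$) out of all the $p_i$. Since $m-1\ge 2$, the two classes cover the link; and because all colored points lie on the circle, the only distance‑$1$ pairs among them are the $60^\circ$ pairs, which receive different colors, so each $P_i$ is independent even as a subset of all of $\R^2$. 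Therefore the premise of $\phi_m$ holds at $x$ under $v$, so $\phi_m$ fails in $\R^2$, giving $\phi_m\in\Loge(\R^m)\setminus\Loge(\R^2)$.

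The heart of the argument, and the step I expect to require the most care, is the planar computation: recognizing that the distance‑$1$ graph on a unit circle decomposes into even ($6$-)cycles and is therefore $2$-colorable is exactly what singles out dimension $2$ and makes the separation uniform over all $m>2$, sidestepping any need for the (unknown) sharp values of the chromatic numbers $\chi(\R^m)$. A secondary point to handle carefully is the interaction between the modality $\Di$ and the local coloring: because $\Di p_i$ ranges over all of $\R^2$, one must confine the color classes to the circle so that the independence clauses $\Box(p_i\to\neg\Di p_i)$ are not accidentally violated by stray colored points elsewhere, which is precisely what justifies taking $v(p_i)=\emptyset$ off the circle.
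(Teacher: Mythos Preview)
Your proof is correct, but it takes a different route from the paper's proof of this particular proposition. The paper uses the two--variable formula
\[
p_1 \wedge \Di p_2 \;\to\; \Di\bigl(\Di p_1 \wedge \Di p_2 \wedge \Di(\Di p_1 \wedge \Di p_2)\bigr),
\]
whose failure in $\R^2$ comes from the fact that the intersection of two unit circles is a \emph{two--point set} neither of whose points is at distance~$1$ from the other, whereas in $\R^m$ with $m\ge 3$ that intersection is an $(m-2)$--sphere of radius $\sqrt{3}/2$, on which every point has a distance--$1$ neighbour. Your argument instead encodes a proper $(m-1)$--colouring of the link sphere and exploits that the unit circle, being a disjoint union of $6$--cycles in the distance--$1$ graph, is $2$--chromatic, while the unit sphere in $\R^m$ contains an $m$--clique coming from a unit $m$--simplex. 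This is essentially a sharpened, self--contained instance of the chromatic--number technique the paper develops \emph{after} this proposition (Proposition~\ref{prp: chromatic}): you are using the explicit bounds $\chi(S^2)=2$ and $\chi(S^m)\ge m$ rather than the asymptotic fact $\chi(S^n)\to\infty$. The paper's formula has the advantage of using only two variables independent of $m$; your approach uses $m-1$ variables but fits naturally into the later framework and makes the role of dimension~$2$ (bipartiteness of the link) completely transparent. One small point worth making explicit in your write--up: the validity argument in $\R^m$ must apply at \emph{every} point $x$, which it does because any $x$ can be taken as a vertex of a translated unit $m$--simplex.
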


\begin{proof}
Consider the formula
\[
p_1 \wedge \Di p_2 \,\,\to\,\, \Di(\Di p_1 \wedge \Di p_2\wedge \Di(\Di p_1 \wedge \Di p_2))
\]
\vspace{-2ex}

\noindent This fails in $\R^2$. To see this, take $x,y$ distance 1 apart and let $v$ be a valuation with $v(p_1)=\{x\}$ and $v(p_2)=\{y\}$. Suppose \ts{lhs} is true at $x$. To have \ts{rhs} true at $x$ we need $x$ distance 1 from some $w$ in $\ol{v}(\Di p_1\wedge \Di p_2\wedge \Di(\Di p_1\wedge \Di p_2))$. Note that $\ol{v}(\Di p_1\wedge \Di p_2) = \{z_1,z_2\}$, the intersection points of the two unit circles centered at $x,y$. So we need $w$ distance 1 from $x$ with $w\in\{z_1,z_2\}$ 
and $w\in\Di(\{z_1,z_2\})$. But there is no such $w$. We now show that this formula is valid in $\R^m$ for $m\geq 3$. Suppose $v$ is some valuation on $\R^m$. If under this valuation \ts{lhs} is true at $x$, then there is $y\in\ol{v}(\Di p_2)$ with $d(x,y)=1$. Now $\ol{v}(\Di p_1\wedge\Di p_2) \supseteq C$ where $C$ is the intersection of the unit balls centered at $x,y$. To show that \ts{rhs} is true at $x$ it is enough to show that $x$ is distance 1 from some $w$ belonging to $C\cap\Di C$. Since $m\geq 3$ we have $C\cap\Di C = C$ since every point of $C$ is of distance 1 from another point of $C$.    
\end{proof}

For our next result, consider $\R^n$ as a graph with $R$ as its edge relation, so with an edge between $x$ and $y$ iff $d(x,y)=1$. There is an extensive literature on these graphs, and in particular it is known that their chromatic numbers $\chi(\R^n)$ are finite. Let $S^n$ be the unit sphere of $\R^n$ considered as a subgraph. Then the chromatic number $\chi(S^n)$ is finite for each $n$ and it is known that $\chi(S^n)\to \infty$, see \cite{chromatic-spheres}. Now for given $k\in\N$, consider the following formula 

\[\phi_k\,\,=\,\,\Box \bigvee_{i=1}^k (p_i\wedge \bigwedge_{i\neq j} \neg p_j) \,\,\imp\,\, \Di \bigvee_{i=1}^k (p_i\wedge \Di p_i).\]
\vspace{.5ex}

\begin{proposition}
\label{prp: chromatic}
$\R^n\models\phi_k$ iff $k<\chi(S^n)$.    
\end{proposition}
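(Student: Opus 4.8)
The plan is to translate $\phi_k$ into a statement about $k$-colorings of unit spheres. Fix a valuation $v$ and read $v(p_1),\dots,v(p_k)$ as (possibly overlapping) color classes. Abbreviate the inner disjunctions as $\alpha=\bigvee_{i=1}^k\bigl(p_i\wedge\bigwedge_{i\neq j}\neg p_j\bigr)$ and $\beta=\bigvee_{i=1}^k(p_i\wedge\Di p_i)$, so that $\phi_k=\Box\alpha\to\Di\beta$. I would first record the two semantic readings that drive everything: $\alpha$ holds at a point iff that point carries exactly one color, and $\beta$ holds at a point $y$ iff $y$ carries some color $i$ that is also carried by an $R$-successor of $y$, i.e.\ iff $y$ is an endpoint of a monochromatic unit-distance edge. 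Since $R(x)=\{y\mid d(x,y)=1\}$ is the unit sphere centered at $x$, a translate $x+S^n$ of $S^n$ and hence isometric to it with $\chi(R(x))=\chi(S^n)$, the antecedent $\Box\alpha$ asserts that the coloring restricts to a \emph{total} map $R(x)\to\{1,\dots,k\}$ on that sphere, while the consequent $\Di\beta$ asserts that some point of $R(x)$ is an endpoint of a monochromatic unit-distance edge.

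For the implication $k<\chi(S^n)\Rightarrow\R^n\models\phi_k$, I would take an arbitrary $x$ at which $\Box\alpha$ holds (where $\Box\alpha$ fails the implication is vacuous) and show $\Di\beta$ holds. By the reading above, $\Box\alpha$ furnishes a total $k$-coloring of the sphere $R(x)$; since $R(x)$ is isometric to $S^n$ and $k<\chi(S^n)$, no $k$-coloring of it can be proper, so there exist $y,y'\in R(x)$ with $d(y,y')=1$ lying in the same class $v(p_i)$. Then $p_i$ and $\Di p_i$ both hold at $y$, so $\beta$ holds at $y$, and $y\in R(x)$ yields $\Di\beta$ at $x$, as required.

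For the converse I would argue the contrapositive: assuming $k\geq\chi(S^n)$, exhibit a valuation refuting $\phi_k$ at the origin $o$, whose successors $R(o)$ are exactly $S^n$. Take a proper coloring of the graph $S^n$ with independent classes $C_1,\dots,C_k$ (padding with $\emptyset$ if $\chi(S^n)<k$), set $v(p_i)=C_i$, and leave every point off $S^n$ uncolored. Since the $C_i$ partition $S^n$, every successor of $o$ carries exactly one color and $\Box\alpha$ holds at $o$. For any $y\in S^n$, say $y\in C_i$, the only color that could witness $\beta$ at $y$ is $i$, and $\Di p_i$ at $y$ would require some $z$ with $d(y,z)=1$ and $z\in C_i$; but the coloring lives entirely on $S^n$ and $C_i$ is independent there, so no such $z$ exists. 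Hence $\beta$ fails throughout $R(o)=S^n$, $\Di\beta$ fails at $o$, and $\phi_k$ is refuted.

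Most of this is a bookkeeping dictionary between modal truth and graph colorings, and I expect no single hard computation. The step that needs the most care is the converse: it is essential that the refuting valuation colors \emph{only} the sphere $S^n$, so that the only unit-distance neighbors of a sphere point that could create a monochromatic edge again lie on $S^n$, where independence of the color classes excludes them; coloring ambient points carelessly could introduce spurious monochromatic edges and destroy the refutation. The other point worth stating explicitly is that translation by $x$ is an isometry carrying $S^n$ onto $R(x)=x+S^n$, which is what transfers $\chi(S^n)$ to every sphere and makes the ``$k$ colors cannot properly color $S^n$'' argument available at each point $x$.
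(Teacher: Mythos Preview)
Your proof is correct and follows essentially the same approach as the paper's: both directions translate $\phi_k$ into the existence or nonexistence of a proper $k$-coloring of the unit sphere, refuting $\phi_k$ at the origin via a proper coloring supported on $S^n$ when $k\geq\chi(S^n)$, and deriving a monochromatic edge on $R(x)$ from $\Box\alpha$ when $k<\chi(S^n)$. Your write-up is slightly more explicit about why the refuting valuation must live only on $S^n$ and about the isometry $R(x)\cong S^n$, but these are exactly the points the paper uses implicitly.
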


\begin{proof}
Write $\phi_k=\Box\psi_k\to\Di \mu_k$. First, suppose $\chi(S^n)\leq k$, so that there is a $k$-coloring of $S^n$. Then $S^n$ can be partitioned into sets $P_1,\ldots,P_k$, possibly with some empty, such that no element in a set $P_i$ is adjacent to another element in $P_i$. Let $v$ be a valuation for $\R^n$ so that $v(p_i)=P_i$ for each $i\leq k$. Then $\ol{v}(\psi_k)$ contains $S^n$. So everything distance~1 from the origin $o$ is contained in $\ol{v}(\psi_k)$, giving $o\in\ol{v}(\Box\psi_k)$. However, the $P_i$ give a coloring, so $P_i\cap\Di P_i=\emptyset$ for each $i\leq k$, hence $\ol{v}(\Di\mu_k)=\emptyset$. So $\phi_k$ is not valid in $\R^n$. 

Suppose $k<\chi(S^n)$. Let $v$ be a valuation and set $P_i=v(p_i)$ for $i\leq k$. If $x\in\ol{v}(\Box\psi_k)$, then the sphere of radius 1 centered at $x$ is contained in $\ol{v}(\psi_k)$. So the restriction of the $P_i$'s to this sphere partition it. But there is no $k$-coloring of this sphere since the assumption $k<\chi(S^n)$ says that there is none of the unit sphere centered at the 
origin. So there are two adjacent vertices $y,z$ belonging to some $P_i$ and lying on the unit sphere centered at $x$. This implies that $y\in P_i\cap\Di P_i\neq\emptyset$ and hence that $x\in\ol{v}(\Di \mu_k)$. It follows that $\phi_k$ is valid in $\R^n$. 
\end{proof}

Since $S^n$ can be embedded into $S^{n+1}$ we have that the chromatic numbers $\chi(S^n)$ are increasing. As we noted, it is known that this sequence converges to $\infty$. Thus there are infinitely many values of $n$ where $\chi(S^n)<\chi(S^m)$ for all $n<m$. Thus, we have the following
\Sp

\begin{corollary}\label{cor:equidist-antichain}
For each $n$ there is $n'$ so that $\Loge(\R^m)\not\subseteq\Loge(\R^n)$ for all $n'<m$, hence the set of logics $\Loge(\R^n)$ for $n\in\N$ contains an infinite anti-chain. 
\end{corollary}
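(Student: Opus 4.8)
The plan is to combine Proposition~\ref{prp: chromatic}, which gives $\R^n \models \phi_k$ iff $k < \chi(S^n)$, with the earlier proposition establishing $\Loge(\R^n) \not\subseteq \Loge(\R^m)$ for $2 \le n < m$, and with the two facts recalled just above the corollary: that $\chi(S^n)$ is nondecreasing (since $S^n$ embeds in $S^{n+1}$) and that $\chi(S^n) \to \infty$. The point is that the formulas $\phi_k$ detect non-containments toward \emph{larger} dimension, while the earlier proposition gives non-containments toward \emph{smaller} dimension, and an anti-chain needs both directions simultaneously.

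For the first assertion I would fix $n$ and take $k = \chi(S^n)$. Since the strict inequality $k < \chi(S^n)$ fails, Proposition~\ref{prp: chromatic} gives $\phi_k \notin \Loge(\R^n)$. Using $\chi(S^m) \to \infty$ I would choose $n'$ with $\chi(S^{n'}) > k$; monotonicity then gives $\chi(S^m) \ge \chi(S^{n'}) > k$, hence $\phi_k \in \Loge(\R^m)$, for every $m \ge n'$. Thus $\phi_k$ witnesses $\Loge(\R^m) \not\subseteq \Loge(\R^n)$ for all $m > n'$, which is the first claim.

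To produce the anti-chain I would use $\chi(S^n) \to \infty$ to select inductively dimensions $2 \le n_1 < n_2 < \cdots$ along which the chromatic numbers strictly increase, $\chi(S^{n_1}) < \chi(S^{n_2}) < \cdots$. Then for any $i < j$ both non-containments hold: the earlier proposition (valid since $2 \le n_i < n_j$) gives $\Loge(\R^{n_i}) \not\subseteq \Loge(\R^{n_j})$, while taking $k = \chi(S^{n_i}) < \chi(S^{n_j})$ makes $\phi_k$ valid in $\R^{n_j}$ but not in $\R^{n_i}$, giving $\Loge(\R^{n_j}) \not\subseteq \Loge(\R^{n_i})$. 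Hence the logics $\Loge(\R^{n_i})$ are pairwise incomparable, so $\{\Loge(\R^{n_i}) : i \in \N\}$ is an infinite anti-chain.

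I do not expect a genuine obstacle here, since the corollary is an assembly of results already in hand; the only external ingredient is the cited divergence $\chi(S^n) \to \infty$, with monotonicity of $\chi(S^n)$ used to locate the index $n'$ and the strictly increasing subsequence of dimensions. The one caveat to watch is that the smaller-dimension non-containment is only available for dimension at least $2$, so the anti-chain must be chosen among dimensions $n_i \ge 2$.
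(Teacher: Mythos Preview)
Your proof is correct and follows essentially the same approach as the paper: the paper simply observes, immediately before the corollary, that monotonicity of $\chi(S^n)$ together with $\chi(S^n)\to\infty$ yields infinitely many dimensions $n$ with $\chi(S^n)<\chi(S^m)$ for all $m>n$, and lets the corollary follow from Proposition~\ref{prp: chromatic} combined with the earlier non-containment toward higher dimension. Your argument is a careful unpacking of exactly this, including the correct caveat that the anti-chain must be chosen among dimensions $\geq 2$.
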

\Sp

\begin{remark}
The technique used in Proposition~\ref{prp: chromatic} could be applied directly to the better known setting of the full unit distance graphs $\R^n$. By the De Bruijn–Erdős theorem, the chromatic number of $\R^n$ is equal to that of a bounded subgraph and $\Di^{r}(x)$ produces a ball of radius $r$ around $x$ for each $r>1$. Then modifying the formula $\phi_k$ used in Proposition~\ref{prp: chromatic} by replacing $\Di$ and $\Box$ with appropriate $\Di^r$ and $\Box^r$ yields a corresponding result. If it were known that the chromatic numbers of the spheres $S^n$, or of the full distance graphs $R^n$, were strictly increasing, we would have that the collection of all logics $\Loge(\R^n)$ for $n\in\N$ is an anti-chain.   
\end{remark}

\section{Comparing the logics for $\R$ and $\Q$}

In this section we compare the logics for $\R$ and $\Q$ with the $\Dg,\Dl$ and $\De$ modalities. The case for the $\De$ modality is the simplest.
\Sp

\begin{prp}\label{prop:dim1:r=1}
$\Loge(\R)=\Loge(\Q)$.
\end{prp}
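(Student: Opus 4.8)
The plan is to identify both frames, under the fixed-distance relation $R$, as disjoint unions of copies of one fixed frame, and then to use that the logic of a disjoint union is the intersection of the logics of its summands.

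First I would unwind the relation. In $(\R,R)$ we have $x\,R\,y$ iff $d(x,y)=1$, i.e. $y=x\pm 1$, so $R(x)=\{x-1,x+1\}$. The decisive observation is that any $R$-edge changes the point by $\pm 1$, hence $y-x\in\Z$; therefore no edge ever leaves a coset $c+\Z$ of the subgroup $\Z\leq\R$, while inside a coset consecutive translates $c+n$ and $c+(n+1)$ are joined. Thus the connected components of $(\R,R)$ are exactly the cosets $c+\Z$ for $c\in[0,1)$, and $c+n\mapsto n$ is an isomorphism of each component onto the fixed frame $F=(\Z,\sim)$, where $m\sim n$ iff $|m-n|=1$ (the two-way infinite path). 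Since there are no edges between distinct cosets, $(\R,R)$ is the disjoint union $\bigsqcup_{c\in[0,1)}F$.

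The same computation applies verbatim to $(\Q,R)$: if $x\in\Q$ and $|x-y|=1$ then $y\in\Q$, so again the relation stays inside the cosets $c+\Z$, now with $c\in\Q\cap[0,1)$, each isomorphic to $F$; hence $(\Q,R)=\bigsqcup_{c\in\Q\cap[0,1)}F$. Finally I would invoke the standard fact $\Log\big(\bigsqcup_i G_i\big)=\bigcap_i\Log G_i$ (see \cite{BDV}). Because every summand of either frame is isomorphic to $F$, both intersections collapse to $\Log F$, giving $\Loge(\R)=\Log F=\Loge(\Q)$.

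I expect no genuine obstacle here: the whole content sits in recognizing the component decomposition, which is immediate once one notes that an edge shifts the coordinate by exactly $\pm 1$. The only thing needing care is checking that the two frames really are disjoint unions---that each coset is closed under $R$ and that there are no cross-coset edges---since this is exactly what licenses the intersection-of-logics formula. The cardinality gap between $\R$ and $\Q$ is irrelevant, as the disjoint-union logic depends only on the isomorphism types of the summands and not on how many copies occur.
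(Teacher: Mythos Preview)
Your proposal is correct and is essentially the same argument as the paper's: the paper's one-line proof observes that the subframe generated by any $x\in\R$ is isomorphic to $\Z$, which is exactly your decomposition into cosets $c+\Z$, each a copy of the bi-infinite path. You have simply made explicit the disjoint-union structure and the standard fact $\Log\bigl(\bigsqcup_i G_i\bigr)=\bigcap_i\Log G_i$ that the paper leaves implicit.
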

\begin{proof}
For any $x\in\R$, the subframe generated by $x$ is isomorphic to $\Z$.
\end{proof}

\subsection{The $\Dg$ modality}
This subsection deals solely with the $\Dg$ modality which we write $\Di$, and we write the relation $R_{>1}$ as $R$. We show that $\Logg(\Q)$ is strictly contained in $\Logg(\R)$. That there is a formula valid in $\R$ and not in $\Q$ is given in the following. 
\Sp

\begin{prp}\label{prp:less-RisnotincludedinQ}
$\Logg(\R)$ is not contained in $\Logg(\Q)$. \end{prp}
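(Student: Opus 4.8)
The plan is to produce a single formula in the $\Di=\Dg$ signature that is valid in $\R$ but refuted by some valuation on $\Q$. The feature that separates the two spaces is Dedekind completeness: $\R$ is connected and order-complete, whereas $\Q$ has a ``gap'' at every irrational. I would try to turn such a gap into a concrete finite configuration of sets that a valuation on $\Q$ can realize but that completeness of $\R$ forbids. The geometric heart is visible through maximal anti-cliques, i.e.\ unit intervals, which are definable by $\Box^2(p\leftrightarrow\neg\Di p)$ as in~(\ref{eqn: max anti-clique}). Over $\Q$, at an irrational $\alpha$ the two half-intervals $(\alpha-1,\alpha)\cap\Q$ and $(\alpha,\alpha+1)\cap\Q$ abut \emph{without meeting} --- their union is gapless of diameter $2$ --- while over $\R$ two unit intervals whose union is gapless of diameter $2$ are forced to share the junction point. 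The conclusion of the separating formula will assert this forced overlap, and its premise will have to pin down the abutting configuration.

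\textbf{Building the formula and the two verifications.} Concretely I would take variables $p,q$ for the two halves and aim at a formula of the shape
\[
\Box^2(p\leftrightarrow\neg\Di p)\wedge\Box^2(q\leftrightarrow\neg\Di q)\wedge\Box^2\neg(p\wedge q)\wedge(\text{adjacency})\ \to\ \Di^2(p\wedge q),
\]
where the first three conjuncts force $v(p),v(q)$ to be disjoint maximal anti-cliques by~(\ref{eqn: max anti-clique}), the ``adjacency'' clause expresses that their union has no gap, and the consequent $\Di^2(p\wedge q)$ asserts a common point via~(\ref{eqn: universal in Dg}). \emph{Validity in $\R$} would follow from completeness: an abutting pair of closed unit intervals $[a,a+1],[a+1,a+2]$ necessarily shares the endpoint $a+1$, since the supremum of the left half is attained and equals the infimum of the right half, so the consequent holds. \emph{Failure in $\Q$} would be witnessed by an irrational $\alpha$ with $v(p)=(\alpha-1,\alpha)\cap\Q$ and $v(q)=(\alpha,\alpha+1)\cap\Q$: these are disjoint maximal anti-cliques whose union is gapless of diameter $2$, yet $v(p)\cap v(q)=\emptyset$ because $\alpha\notin\Q$, so the formula fails.

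\textbf{Main obstacle.} The delicate part is expressing ``adjacency / no gap'' with the farness modality alone. This modality is symmetric, rigid at the single scale $1$, and --- because $\Di^2$ is the universal relation --- collapses all longer far-paths; the only quantifiers it supplies are ``far from some point'' ($\Di$) and ``far from all points'' ($\Box$). In particular it cannot directly speak about distance $2$, about order (``between''), or about closeness to a prescribed coloured set, and a naive ``diameter $2$'' premise is not available. The crux is therefore to find a faithful modal surrogate for adjacency: I expect to encode the order/endpoint information indirectly, using one or two auxiliary variables that colour the two mutually-far ``rays'' of the complement $\neg(p\vee q)$ and record which half is far from which ray, thereby forcing the right end of the left half and the left end of the right half to coincide. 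Verifying that this encoding behaves correctly under \emph{arbitrary} valuations, not merely the intended interval ones, is the step I expect to be hardest, and it is precisely there that the completeness of $\R$ must be made to do the work.
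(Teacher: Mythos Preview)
Your overall strategy is precisely the paper's: exploit completeness of $\R$ via maximal anti-cliques that abut at an irrational point. You also correctly isolate the crux as the ``adjacency'' clause, and this is indeed the gap in your proposal as it stands. The paper fills it with a device much simpler than your ray-colouring: instead of describing adjacency of two intervals directly, introduce a \emph{third} maximal anti-clique $p_1$ and require it to be covered by the other two, via $\Box^2(p_1\to p_2\vee p_3)$ together with pairwise distinctness $\bigwedge_{i\neq j}\Di^2(p_i\wedge\neg p_j)$. In $\R$, since $p_1$ is then a closed unit interval and $p_1\cap p_2$, $p_1\cap p_3$ are proper closed subintervals covering it, connectedness forces them to overlap, so $p_2\cap p_3\neq\emptyset$ and the consequent $\Di^2(p_2\wedge p_3)$ holds. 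In $\Q$, take $p_1=[3,4]\cap\Q$, $p_2=(\pi-1,\pi)\cap\Q$, $p_3=(\pi,\pi+1)\cap\Q$: all three are maximal anti-cliques in $\Q$, pairwise distinct, $p_1\subseteq p_2\cup p_3$, yet $p_2\cap p_3=\emptyset$.

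The third interval acts as a connected ``ruler'' certifying the abutment of $p_2$ and $p_3$ without any reference to the complement, to order, or to distance~$2$; and the verification for arbitrary valuations is immediate, because the premise already forces all three sets to be maximal anti-cliques, hence closed unit intervals in $\R$. Your auxiliary-variable scheme for the complement rays may be workable, but note the complement of two disjoint maximal anti-cliques need not have just two components (take $[0,1]$ and $[3,4]$), so you would also need to modally rule out that configuration; the covering trick bypasses all of this.
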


\begin{proof}
In $\R$, if two maximal anti-cliques, i.e. closed unit intervals, cover a third that is distinct from either one, then they must intersect. This is not the case in $\Q$ since $(\pi-1,\pi)\cup(\pi,\pi+1)$ are disjoint but cover the interval [3,4]. The formula 
\[\bigwedge_{i=1}^3\Box^2 (p_i\leftrightarrow \neg \Di p_i) \wedge \bigwedge_{i\neq j} \Di^2 (p_i \wedge \neg p_j)\wedge \Box^2 (p_1\rightarrow p_2\vee p_3)
\,\,\to\,\, \Di^2 (p_2\wedge p_3)
\]
says that if the valuations of $p_1,p_2,p_3$ are maximal anti-cliques that are pairwise distinct and the valuation of $p_1$ is contained in the union of those of $p_2$ and $p_3$, then the valuations of $p_2$ and $p_3$ intersect non-trivially. So this formula is valid in $\R$ but not in $\Q$.
\end{proof}

Showing that the logic of $\Q$ is contained in that of $\R$ is more involved. For this, we must show that any formula valid in $\Q$ is valid in $\R$, or, using the contrapositive, that any formula $\phi$ that is satisfiable in $\R$ under some valuation is satisfiable in $\Q$. 
The informal idea is that given a valuation in $\R$ and a formula $\phi$, we associate a finite number of ``pivot points'', and then slightly perturbing these points we obtain a valuation in $\Q$ preserving satisfiability of $\phi$.
Throughout the remainder of this subsection, we assume that $\phi$ is a fixed formula, and that $\Psi$ is the set of sub-formulas of $\phi$. 
\Sp

\begin{definition}
For a valuation $v$ on $\R$, set $\ol{v}(\Psi)=\{\ol{v}(\psi)\mid\psi\in\Psi\}$. Let $B_v$ be the Boolean subalgebra of the powerset of $\R$ generated by $\ol{v}(\Psi)$ and let $X_v$ be the set of all real numbers that arise as infima or suprema of members of $B_v$. We say the valuation $v$ is \emph{special} if $X_v\subseteq\Q$.   
\end{definition}
\Sp

\begin{lemma}\label{lem:RvsQ:>1:rationalVal}
If $\phi$ is satisfiable in $\R$ under some valuation $v$, then it is satisfiable under a special valuation.    
\end{lemma}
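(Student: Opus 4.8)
The plan is to transport a satisfying valuation along an increasing homeomorphism of $\R$ that rationalises the finitely many critical values in $X_v$ while leaving intact exactly the metric data that the farness modality can detect.

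First I would isolate the one structural fact that drives the argument: for the farness relation on $\R$ and any $Y\subseteq\R$,
\[
R^{-1}[Y]\;=\;(-\infty,\ \sup Y-1)\ \cup\ (\inf Y+1,\ +\infty),
\]
with the conventions $\sup\emptyset=-\infty$ and $\inf\emptyset=+\infty$, since $x$ is far from some $y\in Y$ precisely when $x+1<\sup Y$ or $x-1>\inf Y$. Hence $\ol v(\Di\psi)$ depends only on $\sup\ol v(\psi)$ and $\inf\ol v(\psi)$, both of which are members of $X_v$; all of the modal behaviour of $v$ on $\Psi$ is thus encoded in the finite set $X_v$ (finite because $\Psi$ is finite, so $B_v$ is a finite Boolean algebra).

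Given this, I would look for an increasing homeomorphism $f\colon\R\to\R$ with (i) $f(a)\in\Q$ for all $a\in X_v$ and (ii) $f(a\pm1)=f(a)\pm1$ for all $a\in X_v$, and then define the pushforward valuation $v'$ by $\ol{v'}(p)=f[\ol v(p)]$. An induction on $\psi\in\Psi$ would show $\ol{v'}(\psi)=f[\ol v(\psi)]$: the Boolean cases hold because $f$ is a bijection commuting with complement and union, and the modal case uses that $f$, being an increasing homeomorphism, satisfies $\sup f[\ol v(\psi)]=f(\sup\ol v(\psi))$ and likewise for infima, while (ii) at the critical values $\sup\ol v(\psi),\inf\ol v(\psi)\in X_v$ gives $f(\sup\ol v(\psi)-1)=f(\sup\ol v(\psi))-1$, matching the two ray-descriptions of $\ol{v'}(\Di\psi)$ and $f[\ol v(\Di\psi)]$. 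From $x_0\in\ol v(\phi)$ this yields $f(x_0)\in\ol{v'}(\phi)$, so $\phi$ is satisfiable under $v'$; and since $B_{v'}=f[B_v]$ with $f$ preserving suprema and infima, $X_{v'}=f[X_v]\subseteq\Q$ by (i), so $v'$ is special.

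The crux, and the step I expect to be the main obstacle, is constructing such an $f$. Condition (ii) only constrains $f$ on the finite set $Y=X_v\cup(X_v-1)\cup(X_v+1)$, so I would build $f$ there first. Forming the graph on $Y$ that joins $a$ to $a\pm1$ for each $a\in X_v$, condition (ii) forces $f$ to increase by the exact point-difference along every edge; hence on each connected component (a ``thread'') $f$ is rigidly a translation $y\mapsto y+c_K$, and (i) reduces to choosing the single offset $c_K$ in the coset $\Q-a$ for a reference point $a$ of the thread. Between distinct threads there are no equality constraints, only the strict inequalities needed to keep $f$ order-preserving on $Y$; these define a nonempty open region of offset-vectors (it contains the all-zero vector, coming from the identity). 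As each admissible coset is dense in $\R$, the product of admissible cosets meets this open region, and any such choice gives $f$ on $Y$ satisfying (i) and (ii) while respecting the order of $Y$. I would then extend $f$ to all of $\R$ by linear interpolation between consecutive points of $Y$ and by a translation beyond the extremes, which is harmless since (ii) is only ever invoked at points of $Y$. The places demanding care are the consistency of the thread constraints (guaranteed because the original reals already realise them) and the density argument that locates a rational-compatible, order-respecting offset-vector.
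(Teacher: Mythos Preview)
Your argument is correct, but the paper takes a shorter and more global route. Rather than analysing the explicit form of $R^{-1}[Y]$ and arranging $f(a\pm1)=f(a)\pm1$ only at the finitely many critical points, the paper observes that for $x\leq y$ one has $|y-x|\leq 1$ if and only if $\lfloor x\rfloor=\lfloor y\rfloor$, or $\lfloor y\rfloor=\lfloor x\rfloor+1$ and $r(y)\leq r(x)$, where $r(x)=x-\lfloor x\rfloor$. Consequently any increasing bijection $\beta$ of $[0,1)$ lifts to $\alpha(x)=\lfloor x\rfloor+\beta(r(x))$, an order-automorphism of $\R$ such that both $\alpha$ and $\alpha^{-1}$ preserve farness \emph{for all pairs of points}. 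Since the finite set of remainders $\{r(a):a\in X_v\}$ can be sent to rationals by choosing $\beta$ appropriately, the pushforward valuation is special, and no subformula induction is needed: $\alpha$ is an isomorphism of the farness frame, so satisfaction transfers immediately.

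Your approach trades this single observation for a finer structural analysis: you isolate that $\ol v(\Di\psi)$ depends only on $\sup\ol v(\psi)$ and $\inf\ol v(\psi)$, and then demand only that $f$ commute with the shifts $\pm1$ at those finitely many values. This costs you the thread/offset construction, the density argument for the offset vector, and an induction over $\Psi$, and your $f$ is not a frame automorphism. What you gain is an explicit account of precisely which metric data the farness modality reads. The paper's construction, by contrast, is essentially one line once the floor/remainder characterisation of $|y-x|\leq1$ is noticed, and it has the side benefit of simultaneously handling the nearness relation $R_{<1}$ (Remark~\ref{rem: auto <1}), which the paper uses later.
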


\begin{proof}
Suppose $\alpha$ is an order-automorphism of $\R$ so that it and its inverse preserve the property of being distance $>1$ apart, and with $\alpha(x)$ rational for each $x\in X_v$. Consider the valuation $u$ given by $u(p)=\alpha[v(p)]$. 
For $\psi\in\Psi$ we have $\ol{u}(\psi)$ is the image $\alpha[\ol{v}(\psi)]$. Since $\alpha$ is an order-automorphism, it preserves existing infima and suprema, and it follows that $u$ is a special valuation. That $\phi$ is satisfiable for $v$ means that $\ol{v}(\phi)$ is non-empty, so $\ol{u}(\phi)=\alpha[\ol{v}(\phi)]$ is non-empty, giving that $\phi$ is satisfiable for $u$. 

To construct such an $\alpha$, for any real number $x$, let $\lfloor x\rfloor$ be the largest integer beneath $x$ and the $r(x)$ be the remainder $x-\lfloor x\rfloor$. Let $R$ be the set of remainders of elements of $X_v$. Since $\Psi$ is finite, so is $B_v$, and thus $R$ is finite. So there is an increasing bijection $\beta:[0,1)\to[0,1)$ with $\beta(x)$ rational for each $x\in R$. Set $\alpha(x)=\lfloor x\rfloor+\beta(r(x))$. Then $\alpha$ is an order-automorphism and $\alpha(x)$ is rational for each $x\in X_v$. It remains to see that $\alpha$ and its inverse preserve the property of being distance $>1$ apart. This follows since for $x\leq y$ we have $|y-x|\leq 1$ iff $\lfloor x\rfloor = \lfloor y\rfloor$ or $\lfloor y\rfloor=\lfloor x\rfloor+1$ and $r(y)\leq r(x)$.
\end{proof}
 
\begin{remark}
\label{rem: auto <1}
We will use this same construction in a later result. Specifically, for a finite set $X$ of real numbers, we will require an order-automorphism $\alpha$ of $\R$ with $\alpha(x)$ rational for each $x\in X$ and such that $\alpha$ and its inverse preserve the property of being distance $<1$ apart. The reader may wish to verify that the construction described above also has this property for distance $<1$.
\end{remark} 
\Sp

From this point onward, we assume that $\phi$ is satisfiable in $\R$ under the special valuation $v$. We denote $X_v$ simply by $X$ and note that $v$ being special means that each member of $X$ is rational. Let $\mathcal{I}$ be the collection of open intervals that have members of $X$ or $\pm\infty$ as their endpoints and do not contain any members of $X$. Note that the members of $\mathcal{I}$ together with $X$ form a partition of $\R$.
\Sp

\begin{lemma} 
\label{lem: I subset}
If $I\in\mathcal{I}$ and $\Di\psi\in\Psi$, either $I$ is disjoint from $\ol{v}(\Di\psi)$ or $I\subseteq\ol{v}(\Di\psi)$. 
\end{lemma}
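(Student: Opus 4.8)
The plan is to exploit the special geometry of the farness modality on the line: although $S=\ol{v}(\psi)$ may be an arbitrary subset of $\R$, the set of points that are far from $S$ depends only on $\sup S$ and $\inf S$. Concretely, a point $x$ lies in $\ol{v}(\Di\psi)$ iff there is $y\in S$ with $|x-y|>1$, i.e. iff $y>x+1$ for some $y\in S$ or $y<x-1$ for some $y\in S$. The first disjunct holds iff $\sup S>x+1$ and the second iff $\inf S<x-1$, so I would first establish the identity
\[
\ol{v}(\Di\psi)=(-\infty,\,\sup S-1)\,\cup\,(\inf S+1,\,\infty),
\]
with the conventions $\sup\emptyset=-\infty$ and $\inf\emptyset=+\infty$. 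Write $a=\sup S-1$ and $b=\inf S+1$.

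Next I would dispose of the degenerate cases. If $S=\emptyset$ then $\ol{v}(\Di\psi)=\emptyset$ and every $I$ is disjoint from it; if $S$ is nonempty and unbounded above or below, or if $S$ is bounded but $a>b$ (equivalently $\sup S-\inf S>2$), then the two rays cover all of $\R$ and every $I$ is contained in it. This leaves the principal case in which $a$ and $b$ are both finite and $a\le b$, so that $\ol{v}(\Di\psi)$ is precisely the complement of the genuine gap $[a,b]$.

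The crux is to show that the two transition points $a$ and $b$ belong to $X$. The infimum and supremum of $\ol{v}(\Di\psi)$ itself are $\mp\infty$ and reveal nothing, so the key move is to pass to the complement. Since $\Di\psi\in\Psi$ we have $\ol{v}(\Di\psi)\in B_v$, and $B_v$ is closed under complementation, so $[a,b]=\R\setminus\ol{v}(\Di\psi)\in B_v$. Its infimum is $a$ and its supremum is $b$, whence $a,b\in X$ by the definition of $X$. This complement trick is the step I expect to carry the real content of the argument; everything else is bookkeeping.

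Finally I would conclude. Fix $I\in\mathcal{I}$. Because $I$ is an open interval containing no point of $X$ while $a,b\in X$, the connected set $I$ is disjoint from $\{a,b\}$ and hence lies entirely within one of the three open regions $(-\infty,a)$, $(a,b)$, $(b,\infty)$. In the first and third cases $I\subseteq\ol{v}(\Di\psi)$, and in the middle case $I\cap\ol{v}(\Di\psi)=\emptyset$, which is exactly the dichotomy asserted by the lemma.
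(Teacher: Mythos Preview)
Your proof is correct and follows essentially the same approach as the paper. The paper phrases the computation via the dual modality, observing that $\ol{v}(\neg\Di\psi)=\ol{v}(\Box\neg\psi)$ is an intersection of closed length-2 intervals and hence a closed interval $[u,v]$ (or all of $\R$); it then uses, just as you do, that this complement lies in $B_v$ so that its endpoints land in $X$, forcing $I$ to fall on one side of them. Your explicit identification of the endpoints as $\sup S-1$ and $\inf S+1$ and your separate handling of the degenerate cases ($S$ empty, $S$ unbounded, $a>b$) make the argument a bit more self-contained, but the substance is the same.
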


\begin{proof}
For a formula $\phi$, $\ol{v}(\Box\phi)=\{x\mid d(x,y)\leq 1\mbox{ for each }y\not\in \ol{v}(\phi)\}$ is the intersection of closed intervals of length 2, so is either a closed interval $[u,v]$ or all of $\R$ when the intersection is of the empty family. Since $\ol{v}(\neg\Di\psi)=\ol{v}(\Box\neg\psi)$, we have $\ol{v}(\neg\Di\psi)$ is either $\R$ or is a closed interval $[u,v]$. If $\ol{v}(\neg\Di\psi)=\R$, then $\ol{v}(\Di\psi)$ is empty, hence is disjoint from $I$. Suppose then that $\ol{v}(\neg\Di\psi)$ is the closed interval $[u,v]$. Clearly $u,v$ are the infimum and supremum of the set $\ol{v}(\neg\Di\psi)$, so they belong to $X$. 
Since $I$ is an open interval that does not contain any members of $X$, it is either contained in $[u,v]$ or disjoint from $[u,v]$, hence is either disjoint from $\ol{v}(\Di\psi)$ or contained in it. 
\end{proof}


\begin{prp}\label{prp:less-Qis-includedinQ}
$\Logg(\Q)\subseteq\Logg(\R)$.    
\end{prp}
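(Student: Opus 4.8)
The plan is to prove the contrapositive in its semantic form: every formula $\phi$ that is satisfiable in $\R$ is satisfiable in $\Q$. By Lemma~\ref{lem:RvsQ:>1:rationalVal} we may assume $\phi$ is satisfiable under a special valuation $v$, so that the finite set $X$ of infima and suprema of members of $B_v$ consists of rationals and the intervals in $\mathcal{I}$ have rational or infinite endpoints. The candidate valuation on $\Q$ is simply the restriction $w(p)=v(p)\cap\Q$, and the goal is the pointwise agreement
\[
\ol{w}(\psi)=\ol{v}(\psi)\cap\Q \qquad\text{for every }\psi\in\Psi,
\]
proved by induction on $\psi$. Granting this, $\ol{w}(\phi)=\ol{v}(\phi)\cap\Q$, and since $\ol{v}(\phi)$ is nonempty with its infimum and supremum in $X\subseteq\Q$, it meets $\Q$, so $\phi$ is satisfiable in $\Q$.

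The propositional and Boolean cases of the induction are immediate for a restriction, since intersection with $\Q$ commutes with complement, union, and $\bot$. The modal case is the heart of the matter. In dimension one the farness modality is global: a direct computation (the content of Lemma~\ref{lem: I subset}) gives $\ol{v}(\Di\psi)=(-\infty,\sup\ol{v}(\psi)-1)\cup(\inf\ol{v}(\psi)+1,\infty)$, an open set whose two endpoints again lie in $X$. The ``forth'' inclusion $\ol{w}(\Di\psi)\subseteq\ol{v}(\Di\psi)\cap\Q$ is trivial: a rational witness in $\Q$ is a witness in $\R$. The difficulty is the ``back'' inclusion: given a rational $q$ with $q\in\ol{v}(\Di\psi)$, we must produce a \emph{rational} point of $\ol{v}(\psi)$ at distance $>1$ from $q$, so that $q\in\ol{w}(\Di\psi)$ via the induction hypothesis $\ol{w}(\psi)=\ol{v}(\psi)\cap\Q$.

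This back inclusion is the main obstacle, and it can fail for the naive restriction: although $\sup\ol{v}(\psi)\in X$ is rational, the set $\ol{v}(\psi)$ may approach its supremum only through irrational points, leaving no rational witness in the required half-line. (For instance $\{1\}\cup\{2-\sqrt{2}/n:n\geq 2\}$ arises as the value set of a special valuation whose only rational point is $1$.) To repair this I would interpose a regularization step: before restricting, replace $v$ by a special valuation whose value sets are all \emph{regular}, meaning each $\ol{v}(\psi)$ is a finite union of points of $X$ and open intervals with endpoints in $X$. Because the one-dimensional farness modality sees only the infimum and supremum of its argument, one can reorganize the finitely many atoms of $B_v$ inside each interval of $\mathcal{I}$ into rational subintervals, preserving for every atom its infimum, its supremum, and its intersection with the finite set $X$; this leaves the value of every modal subformula unchanged as a subset of $\R$ and hence preserves satisfiability of $\phi$, while making the rationals cofinal and coinitial in every $\ol{v}(\psi)$.

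Once the valuation is regular the back inclusion follows at once, since the rationals are dense in each interval reaching the supremum or infimum, so $\sup(\ol{v}(\psi)\cap\Q)=\sup\ol{v}(\psi)$ and $\inf(\ol{v}(\psi)\cap\Q)=\inf\ol{v}(\psi)$; the displayed agreement then propagates through the modality, completing the induction and the proof. I expect the regularization to be the only delicate point: one must carry out the atom-by-atom reorganization so that atoms sharing a boundary point of $X$ remain simultaneously cofinal there, by interleaving rational subintervals, and verify that preserving infima, suprema, and incidences with $X$ indeed preserves, atom for atom, the value sets of all modal subformulas.
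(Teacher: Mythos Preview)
Your strategy—reorganize the valuation inside each interval $I\in\mathcal{I}$ so that the restriction to $\Q$ inherits the right infima and suprema—is exactly the idea behind the paper's proof, and you have correctly isolated the only real obstacle. But the regularization step, as you have specified it, cannot be carried out.

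You ask for each $\ol{v'}(\psi)$ to be a \emph{finite} union of points of $X$ and open intervals \emph{with endpoints in $X$}. Since $X$ is the fixed finite set of extrema of $B_v$, such intervals are exactly unions of members of $\mathcal{I}$, so this forces each $v'(p_i)$ to be constant on every $I$; that collapses all types within $I$ to one and is plainly incompatible with ``preserving for every atom its infimum and supremum.'' Even if you relax ``endpoints in $X$'' to ``rational endpoints'' and allow finitely many subintervals inside each $I$, the invariant still fails: if two disjoint type-classes, say those of $p_1\wedge\neg p_2$ and $\neg p_1\wedge p_2$, are both cofinal at the left endpoint $a$ of some $I$, then a finite partition of $I$ into subintervals can hand the leftmost subinterval to only one of them, so one of $\inf\ol{v'}(p_1),\inf\ol{v'}(p_2)$ is pushed strictly above $a$ and the modal step of your induction breaks. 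Your closing remark about ``interleaving rational subintervals'' is the correct repair, but interleaving cofinally at an endpoint requires infinitely many pieces, contradicting the finiteness you have built into the definition of ``regular.''

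The paper avoids this tension by skipping the intermediate valuation on $\R$ altogether. It partitions each $I\cap\Q$ into \emph{dense} pieces $V_{I,\tau}$, one for every type $\tau$ occurring in $I$, defines $w(p)=Z[v(p)]$ via the relation $a\,Z\,b$ that links $a\in I$ to every $b\in V_{I,\tau(a)}$, and proves inductively that $\R,a\models_v\psi\Leftrightarrow\Q,b\models_w\psi$ whenever $a\,Z\,b$. Density of the pieces is precisely what lets every type be simultaneously cofinal at both endpoints of $I$, which is the property your finite-interval picture cannot deliver. Your argument becomes correct if you replace ``finite union of intervals with endpoints in $X$'' by ``countable union of rational subintervals, each type cofinal at both endpoints of each $I$ it meets''; the rest of your induction then goes through unchanged, and the result is essentially a rephrasing of the paper's bisimulation argument.
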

\setcounter{claim}{0}
\begin{proof}
Assume that $\phi$ is satisfiable in $\R$ under the special valuation $v$. Let $\Psi$, $B$, $X$ and $\mathcal{I}$ be as described above. We will construct a valuation $w$ on $\Q$ and show that $\phi$ is satisfiable in $\Q$ under this valuation.

Suppose that $\phi$ has $k$ variables. For each $a\in \mR$, let $\tau(a)=\{i\mid a\in v(p_i)\}$, there there are $2^k$ possible sets of the form $\tau(a)$.  For each $I\in\mathcal{I}$, partition $I\cap\Q$ into dense pieces $V_{I,\tau}$ where $\tau$ ranges over the sets $\tau(a)$ for $a\in I$ and let $Z$ be the relation from $\R$ to $\Q$ where $a\,Z\,b$ iff either $a\in X$ and $a=b$ or $a\in I$ and $b\in V_{I,\tau(a)}$. We then define a valuation $w$ on $\Q$ by setting $w(p)=Z[v(p)]$.
\vspace{1ex}

\begin{claim}\label{cl:th1:1}
If $a\,Z\,b$, then $a\in v(p)$ iff $b\in w(p)$.    
\end{claim}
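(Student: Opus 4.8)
The plan is to verify the two directions of the biconditional separately, with the forward direction being immediate and the backward direction resting on a single observation about the relation $Z$. Throughout, $p$ denotes one of the variables $p_1,\ldots,p_k$ of $\phi$.

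For the forward direction, suppose $a\,Z\,b$ and $a\in v(p)$. Then by the very definition $w(p)=Z[v(p)]$, the point $b$ lies in the relational image of $v(p)$ under $Z$, so $b\in w(p)$. Nothing further is needed here.

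The backward direction is the substantive part. Suppose $a\,Z\,b$ and $b\in w(p)$. Unwinding $w(p)=Z[v(p)]$, there is some $a'\in v(p)$ with $a'\,Z\,b$. The key claim I would isolate is that \emph{a single rational $b$ can only be $Z$-related to real points of one and the same type}, that is, if $a\,Z\,b$ and $a'\,Z\,b$ then $\tau(a)=\tau(a')$. Granting this, $\tau(a)=\tau(a')$ means $a$ and $a'$ lie in exactly the same members of $v(p_1),\ldots,v(p_k)$, and since $a'\in v(p)$ we conclude $a\in v(p)$.

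To establish that observation I would argue by cases on where $b$ sits in the partition of $\R$ into $X$ and the intervals in $\mathcal{I}$. If $b\in X$, then the definition of $Z$ forces $a=b$ and $a'=b$, so trivially $\tau(a)=\tau(a')$. Otherwise $b$ is a rational lying outside $X$, hence in a unique interval $I\in\mathcal{I}$; here $a\,Z\,b$ forces $a\in I$ and $b\in V_{I,\tau(a)}$, and likewise $a'\,Z\,b$ forces $b\in V_{I,\tau(a')}$. Since the pieces $\{V_{I,\tau}\}$ were chosen to partition $I\cap\Q$, and in particular are pairwise disjoint, the point $b$ lies in exactly one of them, whence $\tau(a)=\tau(a')$. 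The only real content is this disjointness-of-pieces observation, and there is no genuine obstacle: the definitions of $Z$ and of the $V_{I,\tau}$ were set up precisely so that each rational carries a well-defined type. The density of the pieces $V_{I,\tau}$ plays no role in this claim and will only be needed for the later claims matching up the modal structure.
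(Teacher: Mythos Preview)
Your proof is correct and essentially the same as the paper's: both hinge on the observation that the sets $V_{I,\tau}$ partition $I\cap\Q$, so any $b$ is $Z$-related only to reals of a single type $\tau$. The only cosmetic difference is that the paper cases on whether $a\in X$ or $a\in I$, while you case on whether $b\in X$ or $b\in I$; these are equivalent since $a\,Z\,b$ forces $a$ and $b$ to lie in the same piece of the partition.
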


\begin{proof}[Proof of claim]
If $a\in v(p)$, then since $a\,Z\,b$ we have $b\in w(p)$. Conversely, suppose $b\in w(p)$. If $a\in X$, then since $a\,Z\,b$ we have $a=b$. Since $b\in w(p)$ we have $c\,Z\,b$ for some $c\in v(p)$. But $b=a$, so we must have $c=a$, hence $a\in v(p)$. Suppose $a\in I$ for some $I\in\mathcal{I}$. Since $a\,Z\,b$ and $a\not\in X$ we have $b\in V_{I,\tau(a)}$. As we have $b\in w(p)$, there is $c\in v(p)$ with $c\,Z\,b$. Since $b\in I$, we must have $b\in V_{I,\tau(c)}$. Since the $V_{I,\tau}$ are a partition, we have $\tau(c)=\tau(a)$. But $\tau(a)=\{p\mid a\in v(p)\}$, and since $c\in v(p)$, it follows that $a\in v(p)$.
\end{proof}

\begin{claim}\label{cl:th1:2}
For $a\,Z\,b$ and $\psi\in\Psi$, we have $\R,a\models_v\psi$ iff $\Q,b\models_w\psi$.
\end{claim}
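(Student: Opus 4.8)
The plan is to prove Claim~\ref{cl:th1:2} by induction on the structure of $\psi$, with the atomic and Boolean cases being routine and the case $\psi=\Di\chi$ carrying all of the content. For $\psi=p_i$ the statement is exactly Claim~\ref{cl:th1:1}, and $\psi=\bot$ is vacuous. For $\psi=\chi_1\imp\chi_2$ both immediate subformulas lie in $\Psi$, so applying the induction hypothesis at the pair $a\,Z\,b$ and using that the truth of an implication at a point is a Boolean function of the truth of its subformulas gives the equivalence at once.

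The heart of the argument is $\psi=\Di\chi$. First I would record that $a\,Z\,b$ forces $a$ and $b$ to lie in the same block of the partition of $\R$ given by $X$ and $\mathcal{I}$: if $a\in X$ then $b=a$, and if $a\in I$ for some $I\in\mathcal{I}$ then $b\in V_{I,\tau(a)}\subseteq I$. Since $\Di\chi\in\Psi$, Lemma~\ref{lem: I subset} says each $I\in\mathcal{I}$ is either contained in or disjoint from $\ol{v}(\Di\chi)$, so $a\in\ol{v}(\Di\chi)$ iff $b\in\ol{v}(\Di\chi)$ (reading $b$ as a real number). Since $\R,a\models_v\Di\chi$ means $a\in\ol{v}(\Di\chi)$, it therefore suffices to prove the equivalence $b\in\ol{v}(\Di\chi)\iff\Q,b\models_w\Di\chi$.

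For the forward implication, suppose $b\in\ol{v}(\Di\chi)$, so there is a real $x\in\ol{v}(\chi)$ with $|b-x|>1$. If $x\in X$, then $x$ is rational with $x\,Z\,x$, and the induction hypothesis gives $\Q,x\models_w\chi$, so $x$ itself is a rational witness at distance $>1$ from $b$. If instead $x\in J$ for some $J\in\mathcal{I}$, then $\{y\in J:|b-y|>1\}$ is a nonempty relatively open subset of $J$ containing $x$; since $V_{J,\tau(x)}$ is dense in $J$, I can pick $b'\in V_{J,\tau(x)}$ with $|b-b'|>1$, whence $x\,Z\,b'$ and the induction hypothesis yields $\Q,b'\models_w\chi$. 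In either case $\Q,b\models_w\Di\chi$. This is precisely where the density of the pieces $V_{J,\tau}$ is used.

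The reverse implication is the step I expect to be the main obstacle, since the dense pieces $V_{J,\tau}$ seem capable of producing a rational witness far from $b$ even when the type of that witness occurs in $\ol{v}(\chi)$ only near $b$. I would argue by contradiction: assume $\Q,b\models_w\Di\chi$, witnessed by $b'\in\Q$ with $|b-b'|>1$ and $\Q,b'\models_w\chi$, and assume $b\notin\ol{v}(\Di\chi)$, i.e.\ $\ol{v}(\chi)\subseteq[b-1,b+1]$. Choosing any $a'$ with $a'\,Z\,b'$ (one exists, as every rational is $Z$-related to some real), the induction hypothesis gives $a'\in\ol{v}(\chi)$, so $a'\in[b-1,b+1]$ while $|b-b'|>1$. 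The decisive point is that $\inf\ol{v}(\chi)$ and $\sup\ol{v}(\chi)$ lie in $X$, since $\ol{v}(\chi)$ is a bounded nonempty member of $B$ and $X$ collects the infima and suprema of members of $B$. Now whichever of $b'>b+1\geq\sup\ol{v}(\chi)$ or $b'<b-1\leq\inf\ol{v}(\chi)$ holds places $b'$ strictly beyond the corresponding endpoint of $\ol{v}(\chi)$, while $a'$ lies on the other side of it; since $a'$ and $b'$ share a block, that endpoint — a member of $X$ — would sit in the interior of the block, which is impossible. This contradiction yields $b\in\ol{v}(\Di\chi)$, completing the modal case and hence the induction.
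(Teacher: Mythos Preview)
Your proof is correct and follows essentially the same approach as the paper's: induction on $\psi$, reduction via Lemma~\ref{lem: I subset} to the point $b$, density of the $V_{J,\tau}$ for the forward modal direction, and the observation that $\inf\ol v(\chi),\sup\ol v(\chi)\in X$ for the reverse direction. The only difference is cosmetic: in the reverse direction the paper argues directly (showing $I\subseteq(\ell,u)$, hence $\ell<b'<u$, hence $d(b,\ell)>1$ or $d(b,u)>1$, and then picking a witness in $\ol v(\chi)$ near $\ell$ or $u$), while you argue by contradiction (assuming $\ol v(\chi)\subseteq[b-1,b+1]$ and trapping an element of $X$ inside the interval containing $a'$ and $b'$). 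Both arguments pivot on the same fact and are of comparable length.
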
 

\begin{proof}[Proof of claim] The case when $\psi$ is a variable is given by 
Claim \ref{cl:th1:1}.
The cases when the outermost connective of $\psi$ is boolean are trivial. It remains to consider $\psi=\Di\gamma$. Since $a\,Z\,b$ implies that $a=b$ or that $a,b$ belong to the same $I\in\mathcal{I}$, by 
Lemma~\ref{lem: I subset} we have

\[
\R,a\models_v\Di\gamma\,\,\Leftrightarrow\,\,\R,b\models_v\Di\gamma
\]
\vspace{-1ex}

``$\Rightarrow$'' Assume $\R,a\models_v\Di\gamma$, so by the above, $\R,b\models_v\Di\gamma$. Then there is $c\in\R$ with $d(b,c)>1$ and  $\R,c\models_v\gamma$. If $c\in X$, then $c\,Z\,c$, so $\Q,c\models_w\gamma$ by the inductive hypothesis, hence $\Q,b\models_w\Di\gamma$. If $c\not\in X$, then $c\in I$ for some $I\in\mathcal{I}$. Since $V_{I,\tau(c)}$ is dense in $I$, and $d(b,c)>1$, there is $e\in V_{I,\tau(c)}$ with $d(b,e)>1$. Then $c\,Z\,e$ and the inductive hypothesis gives $\Q,e\models_w\gamma$. Since $d(b,e)>1$ we have $\Q,b\models_w\Di\gamma$.

``$\Leftarrow$'' Suppose $\Q,b\models_w\Di\gamma$. From above, it is enough to show that $\R,b\models_v\Di\gamma$. Our assumption gives $e\in \Q$ with $d(b,e)>1$ and $\Q,e\models_w\gamma$.
If $e\in X$, then $e\,Z\,e$. Then the inductive hypothesis gives $\R,e\models_v\gamma$, and hence $\R,b\models_v\Di\gamma$. 
If $e\not\in X$, then $e\in I\cap\Q$ for some $I\in\mathcal{I}$. Since the sets $V_{I,\tau}$ partition $I\cap\Q$ there is $c\in I$ with $e\in V_{I,\tau(c)}$, hence with $c\,Z\,e$. Then, by the inductive hypothesis, $\R,c\models_v\gamma$, so $c\in\ol{v}(\gamma)$. If $\ol{v}(\gamma)$ is unbounded, then trivially $\R,b\models_v\Di\gamma$. Otherwise let $\ell$ and $u$ be the infimum and supremum of $\ol{v}(\gamma)$ and note that $\ell,u$ belong to $X$. Since $c\in\ol{v}(\gamma)$ and $c\in I$ we have $\ell<c<u$, and since $I$ is an interval that does not contain $\ell,u$ we have $I\subseteq (\ell,u)$. In particular, since $e\in I$ we have $\ell<e<u$. Since $d(b,e)>1$, either $d(b,\ell)>1$ or $d(b,u)>1$. Since there are elements of $\ol{v}(\gamma)$ arbitrarily close to $\ell$ and to $u$, there is $f\in\ol{v}(\gamma)$ with $d(b,f)>1$. Thus $\R,b\models_v\Di\gamma$.
\end{proof}

Returning to the proof of the proposition, we assumed that $\phi$ was satisfiable in $\R$ under the valuation $v$. So there is $a\in\R$ with $\R,a\models_v\phi$. There is $b\in\Q$ with $a\,Z\,b$, indeed, if $a\in X$ take $b=a$, and if $a\in I$ take any $b\in V_{I,\tau(a)}$. Then by 
Claim \ref{cl:th1:2}
we have $\Q,b\models_w\phi$. So $\phi$ is satisfiable in $\Q$ under the valuation $w$. 
\end{proof}
 
Combining Propositions~\ref{prp:less-RisnotincludedinQ} and~\ref{prp:less-Qis-includedinQ}, we have 
\Sp

\begin{theorem} \label{thm:combined}
The logic $\Logg(\Q)$ is strictly contained in $\Logg(\R)$. 
\end{theorem}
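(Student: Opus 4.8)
The plan is to assemble the theorem directly from the two preceding propositions, which between them supply the containment and its strictness. First I would invoke Proposition~\ref{prp:less-Qis-includedinQ} for the inclusion $\Logg(\Q)\subseteq\Logg(\R)$: every modal formula in the $\Di_{>1}$ signature that is valid on $\Q$ is also valid on $\R$. Then I would invoke Proposition~\ref{prp:less-RisnotincludedinQ}, which exhibits an explicit separating formula that is valid in $\R$ but fails in $\Q$ — the covering formula built from three maximal anti-cliques, using that in $\R$ two closed unit intervals covering a third must overlap, whereas in $\Q$ the intervals $(\pi-1,\pi)$ and $(\pi,\pi+1)$ cover $[3,4]$ while remaining disjoint. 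This formula lies in $\Logg(\R)\setminus\Logg(\Q)$, so the inclusion cannot be an equality.

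Putting the two together, $\Logg(\Q)\subseteq\Logg(\R)$ with the inclusion proper, which is precisely the strict containment asserted. Because both ingredients are already in hand, there is no residual obstacle at this stage: the combination is immediate, and I would simply cite the two propositions and record that their conjunction is the statement of the theorem.

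If I reflect on where the genuine difficulty of this result actually sits, it is entirely inside Proposition~\ref{prp:less-Qis-includedinQ}, the containment direction. Establishing $\Logg(\R)\not\subseteq\Logg(\Q)$ is the easy half, requiring only one well-chosen valuation with irrational interval endpoints. The containment, by contrast, rests on the pivot-point/perturbation machinery: reducing to a \emph{special} valuation via an order-automorphism of $\R$ sending the relevant infima and suprema to rationals (Lemma~\ref{lem:RvsQ:>1:rationalVal}), the interval-homogeneity of the sets $\ol{v}(\Di\psi)$ on each $I\in\mathcal{I}$ (Lemma~\ref{lem: I subset}), and the back-and-forth-style relation $Z$ matching each real point to a dense rational piece $V_{I,\tau}$ so that truth of every subformula transfers (the two claims within that proof). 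Thus the substantive work of the theorem has already been spent; at the level of Theorem~\ref{thm:combined} itself, the proof is a one-line appeal to Propositions~\ref{prp:less-RisnotincludedinQ} and~\ref{prp:less-Qis-includedinQ}.
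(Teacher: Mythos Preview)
Your proposal is correct and matches the paper's approach exactly: the paper also derives Theorem~\ref{thm:combined} as an immediate combination of Propositions~\ref{prp:less-RisnotincludedinQ} and~\ref{prp:less-Qis-includedinQ}, with no additional argument. Your reflection on where the substantive work lies is accurate as well.
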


\subsection{The $\Dl$ modality}

This subsection deals solely with the $\Dl$ modality which we write $\Di$. We show that in this modality, the logic of $\Q$ is strictly contained in that of $\R$. The first step in the proof uses a formula that expresses a form of connectivity. Let $\phi$ be the formula

\begin{equation*}
   \Box(\Di \Box p \vee \Di \Box \neg p) \,\,\imp\,\, \Box p \vee \Box \neg p 
\end{equation*}
\vspace{-1ex}

\begin{prp}
$\Logl(\R)\not\subseteq\Logl(\Q)$.     
\end{prp}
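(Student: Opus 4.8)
The plan is to prove the two directions separately: that the formula $\phi$ is refuted in $\Q$ but valid in $\R$. The formula is a modal encoding of connectedness, and the whole point is that it should separate the connected line from the totally disconnected rationals. Throughout I would write $P=v(p)$ and abbreviate $S=\ol v(\Box p)$ and $T=\ol v(\Box\neg p)$, so that $S=\{x\mid (x-1,x+1)\subseteq P\}$ and $T=\{x\mid (x-1,x+1)\cap P=\emp\}$ (intervals taken in the ambient space). Note that $A:=\ol v(\Di\Box p)$ is the open $1$-neighbourhood of $S$ and $B:=\ol v(\Di\Box\neg p)$ is the open $1$-neighbourhood of $T$. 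In these terms the premise of $\phi$ holds at $w$ iff $(w-1,w+1)\subseteq A\cup B$, and the conclusion holds at $w$ iff $w\in S\cup T$.

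For the refutation in $\Q$, I would fix an irrational $\alpha$ and take $v(p)=\{x\in\Q\mid x<\alpha\}$. A short density computation (using that $\alpha$ is irrational) gives $S=\{x\in\Q\mid x<\alpha-1\}$ and $T=\{x\in\Q\mid x>\alpha+1\}$, whence $A=\{x\in\Q\mid x<\alpha\}$ and $B=\{x\in\Q\mid x>\alpha\}$. Since $\alpha\notin\Q$ these cover everything, $A\cup B=\Q$, so the premise of $\phi$ holds at every point. The conclusion, however, fails at any rational $w$ in the length-$2$ interval $(\alpha-1,\alpha+1)$, which certainly exists. Hence $\phi$ is not valid in $\Q$.

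For validity in $\R$, the decisive geometric observation is that $S$ and $T$ are far apart: if $s\in S$ and $t\in T$, then the open intervals $(s-1,s+1)\subseteq P$ and $(t-1,t+1)\subseteq P^c$ are disjoint, forcing $|s-t|\ge 2$. Consequently their open $1$-neighbourhoods $A$ and $B$ are disjoint, since a common point would lie within $1$ of some $s\in S$ and of some $t\in T$, giving $|s-t|<2$. Now suppose $\phi$ failed at some $w$. The premise gives $(w-1,w+1)\subseteq A\cup B$; since $(w-1,w+1)$ is connected and $A,B$ are disjoint open sets, the interval lies entirely in $A$ or entirely in $B$. But the conclusion failing at $w$ means $(w-1,w+1)$ meets both $P$ and $P^c$. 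If $(w-1,w+1)\subseteq A$, pick $q\in(w-1,w+1)$ with $q\notin P$; being in $A$, $q$ lies within $1$ of some $s\in S$, so $q\in(s-1,s+1)\subseteq P$, a contradiction. The case $(w-1,w+1)\subseteq B$ is symmetric, using a point of $(w-1,w+1)\cap P$. Hence no such $w$ exists and $\phi$ is valid in $\R$.

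I expect the main obstacle to be the validity half, and within it the choice of the right invariant: recognizing that the relevant objects are the strong interiors $S,T$ rather than $P,P^c$, establishing the separation $|s-t|\ge 2$, and converting it into disjointness of the neighbourhoods $A,B$ so that connectedness of the line can be invoked. By comparison the $\Q$ side is routine once an irrational cut is chosen, the essential phenomenon being that the missing irrational lets $A\cup B$ exhaust $\Q$ even though no single point of $\Q$ witnesses membership in both halves.
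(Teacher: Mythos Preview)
Your proof is correct and follows essentially the same approach as the paper: the same formula $\phi$, an irrational cut for the refutation in $\Q$, and the connectedness of $(w-1,w+1)$ together with the disjointness and openness of $A,B$ for validity in $\R$. The only cosmetic difference is that the paper obtains $A\cap B=\emp$ from the general inclusion $\ol v(\Di\Box\psi)\subseteq\ol v(\psi)$, whereas you derive it from the metric observation $|s-t|\ge 2$; both routes are equally short.
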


\begin{proof}
We show that $\phi$ is valid in $\R$ but not in $\Q$. To show $\phi$ is not valid in $\Q$ consider a valuation $v$ for $\Q$ with $v(p)=(x,\infty)$ for some irrational $x$. Then  

\[
\begin{array}{ccccc}
\ol{v}(p)=(x,\infty)&&\ol{v}(\Box p)=(x+1,\infty)&&\ol{v}(\Di\Box p)=(x,\infty)\vspace{1ex}\\
\ol{v}(\neg p)=(-\infty,x)&&\ol{v}(\Box \neg p)=(-\infty,x-1)&&\ol{v}(\Di\Box \neg p)=(-\infty,x)
\end{array}
\]
\Sp

\noindent 
Then $\ol{v}$ of the premise of $\phi$ is all of $\Q$, but $\ol{v}$ of the conclusion of $\phi$ is missing all elements within distance $1$ of $x$. 
Thus $\phi$ is not valid in $\Q$. We next show that $\phi$ is valid in $\R$. Assume that $x$
\[
\mR,x\mo_\val\Box(\Di\Box p \vee \Di\Box \neg p).    
\]
Observe that the value of any formula of form $\Di\psi$ is an open set: indeed, $\vext(\Di\psi)$ is the union of open intervals. Let $I$ be the interval $(x-1,x+1)$. Then $I$ is contained in the union of two open sets $\vext(\Di\Box p)$ and $\vext(\Di\Box\neg p)$. These sets are disjoint, since for any formula $\psi$,   $\vext(\Di\Box \psi)\subseteq \vext(\psi)$. Since $I$ is connected, $I\cap \vext(\Di\Box p)$ is empty or $I\cap \vext(\Di\Box \neg p)$ is empty. In the first case, $I\subseteq \vext(\Di\Box \neg p)\subseteq \val(\neg p)$, and we have $\mR,x\mo_\val\Box\neg p$; the latter case implies $\mR,x\mo_\val\Box p$. Hence $\phi$ is valid in $\mR$.
\end{proof}

The formula $\phi$ has a more general interpretation that is perhaps of interest, and we discuss this in the following remark. 
\Sp

\begin{remark}
By a \emph{graph} $G=(X,R)$ we mean here a set with a reflexive, symmetric relation. We say a set of the form $R(x)$ is \emph{basic open} and that a set is \emph{open} if it is the union of basic open sets. Continuing the topological metaphor, we say that a subset $S$ of $X$ is \emph{connected} (in this new sense) if $S$ being contained in the union of two disjoint open sets implies that it is contained in one of the sets. It is not difficult to show that in a graph $G$, a set $S$ is open iff $S=\Di\Box S$ and that each set of the form $\Di\Box S$ for some $S$ is open. It follows that $G\models\phi$ iff every basic open set is connected. The proposition above reflects that basic open sets in $\R$ are connected, but not so in $\Q$.
\end{remark}






\subsubsection{$\Logl(\mQ)$ is included in $\Logl(\mR)$}

We mainly following the same strategy as for the farness logic, but the proof requires more methods from modal and classical logic. To state our first step, we introduce the following standard terminology.
For a formula $\vf$, let $\md(\vf)$ denote its {\em modal depth}, which is recursively defined with falsity $\bot$ and variables having modal depth 0, $\psi\to\chi$ having the maximum of the modal depth of $\psi,\chi$ and $\Di\psi$ having modal depth one greater than that of $\psi$. The following fact is well known. 

\Sp
\begin{proposition}\label{prop:modal-depth}
A formula $\vf$ is satisfiable at $x$ in $(X,R)$ iff it is satisfiable at $x$ in the subframe $Y=\bigcup\{R^n(x)\mid n\leq \md{(\phi)}\}$. 
\end{proposition}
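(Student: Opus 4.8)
The statement is the standard \emph{locality} property of modal satisfaction: the truth value of $\vf$ at $x$ is determined by the points reachable from $x$ in at most $\md(\vf)$ many $R$-steps. The plan is to prove a parametrized strengthening by induction on the structure of the subformula, where the parameter records how far from $x$ the point of evaluation is permitted to lie, and to arrange that this budget drops in lockstep with the modal depth as modalities are stripped away.

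First I would fix the truncated cones $Y_m=\bigcup\{R^n(x)\mid n\le m\}$, so that $Y=Y_{\md(\vf)}$, and let $u$ be the restriction of $v$ to $Y$, that is $u(p)=v(p)\cap Y$. The claim to prove by induction on $\psi$ is: for every subformula $\psi$ of $\vf$ and every $y\in Y_{\md(\vf)-\md(\psi)}$,
\[
(X,R),y\models_v\psi \quad\Longleftrightarrow\quad (Y,R\restr Y),y\models_u\psi .
\]
Since $\md(\psi)\le\md(\vf)$ for subformulas, the index $\md(\vf)-\md(\psi)$ is nonnegative, and $Y_{\md(\vf)-\md(\psi)}\subseteq Y$, so both sides make sense.

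The base case ($\psi$ a variable or $\bot$) holds because $y\in Y$ gives $y\in v(p)$ iff $y\in u(p)$, and $\bot$ fails in both. The Boolean cases are immediate: $\md$ of a Boolean combination is the maximum of its components' depths, so $y$ lies in the (larger) cones required to apply the inductive hypothesis to each immediate subformula. The heart is the modal case $\psi=\Di\gamma$, where $\md(\psi)=\md(\gamma)+1$. The single observation driving everything is that one $R$-step raises the cone index by exactly one: if $y\in Y_{\md(\vf)-\md(\gamma)-1}$ and $y\,R\,z$, then $z\in Y_{\md(\vf)-\md(\gamma)}\subseteq Y$, which is precisely the cone on which the inductive hypothesis for $\gamma$ applies. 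Hence in the ``$\Rightarrow$'' direction any witness $z$ to $\Di\gamma$ in the full frame lands in $Y$ and, having both endpoints in $Y$, gives $y\,(R\restr Y)\,z$; the inductive hypothesis transfers truth of $\gamma$ to $(Y,R\restr Y)$. The ``$\Leftarrow$'' direction is symmetric: a witness $z\in Y$ with $y\,(R\restr Y)\,z$ satisfies $y\,R\,z$ in $X$ and still lies in $Y_{\md(\vf)-\md(\gamma)}$, so the inductive hypothesis returns truth of $\gamma$ in $(X,R)$.

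Instantiating the claim with $\psi=\vf$ and $y=x$ (legitimate since $x\in R^0(x)=Y_0\subseteq Y$ and $\md(\vf)-\md(\vf)=0$) yields agreement of $\vf$ at $x$ between $(X,R,v)$ and $(Y,R\restr Y,u)$. This gives one direction of satisfiability at once; for the converse, a valuation $w$ on $Y$ satisfying $\vf$ at $x$ is extended arbitrarily to a valuation on $X$ (say $v(p)=w(p)$), whose restriction to $Y$ is again $w$, so the same instance of the claim applies. I do not expect a genuine obstacle here; the only real care is in choosing the induction invariant so that the budget $\md(\vf)-\md(\psi)$ tracks the modal depth, and in checking that $R\restr Y$ retains exactly the successor steps the evaluation ever uses — which it does, because we step out only from points that still carry at least one unit of budget.
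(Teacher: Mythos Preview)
Your argument is correct and is the standard proof of this locality fact. The paper does not supply a proof of this proposition at all; it is introduced with the phrase ``the following fact is well known'' and left without demonstration. So there is no approach to compare against---your induction on subformulas with the budget $\md(\vf)-\md(\psi)$ tracking distance from the root is exactly the textbook argument one would expect here, and every step checks out.
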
 
\Sp
\setcounter{claim}{0}

Our aim is to show that 
$\Logl(\mQ) \subseteq     \Logl(\mR)$ by showing that any formula $\vf$ that is satisfiable in $\R$ at some point $x$ is satisfiable in $\Q$. We assume without loss of generality that $x=0$ and let $\Psi$ be the set of subformulas of $\phi$.

\Sp

\begin{claim}\label{cl:th2:1}
There is an interval $V=(-d,d)$ and a valuation $v$ on $V$ with $V,0\models_v\phi$.
\end{claim}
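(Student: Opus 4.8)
The plan is to reduce to a bounded interval by a direct appeal to Proposition~\ref{prop:modal-depth}, for which the only real work is to identify the set of points reachable from $0$ within $\md(\phi)$ steps of the nearness relation. Write $m = \md(\phi)$. I would first establish the reachability formula $R^n(0) = (-n,n)$ for every $n \geq 1$. The inclusion $R^n(0) \subseteq (-n,n)$ is immediate from the triangle inequality, since any chain $0\,R\,y_1\,R\cdots R\,y_{n-1}\,R\,z$ traverses a total distance strictly less than $n$. For the reverse inclusion, given $z$ with $|z| < n$ I would exhibit the explicit chain $0,\,z/n,\,2z/n,\,\dots,\,z$, whose consecutive terms differ by $|z|/n < 1$ and so are close; this witnesses $0\,R^n\,z$. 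Since $R$ is reflexive ($d(x,x)=0<1$), the sets $R^n(0)$ are nested, so $Y = \bigcup\{R^n(0) \mid n \leq m\} = (-m,m)$ for $m \geq 1$.

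With $Y = (-m,m)$ in hand, the claim is immediate. By hypothesis $\phi$ is satisfiable at $0$ in $\R$, so Proposition~\ref{prop:modal-depth} gives that $\phi$ is satisfiable at $0$ in the subframe on $Y$; here it is worth noting only that the relation induced on the subinterval $(-m,m)$ is exactly its nearness relation as a subspace of $\R$, so nothing is lost in passing to the subframe. Taking $d = m$, the interval $V = (-d,d)$ coincides with $Y$, and a witnessing valuation on $Y$ is the required $v$ with $V,0 \models_v \phi$.

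The single degenerate case is $m = 0$, where $\phi$ is propositional and its truth at $0$ depends only on the variables holding at $0$; I would then take any $d > 0$, say $d = 1$, and let $v$ agree at $0$ with a valuation witnessing satisfiability in $\R$, extended arbitrarily on the rest of $V$. I do not expect a serious obstacle in this claim: its entire content is the elementary reachability computation $R^n(0) = (-n,n)$, after which the result follows directly from Proposition~\ref{prop:modal-depth}. The symmetric, bounded form $V=(-d,d)$ is exactly what sets up the subsequent step, where (in analogy with the farness case and Remark~\ref{rem: auto <1}) one perturbs $V$ to an interval with rational endpoints and transfers the valuation to $\Q$.
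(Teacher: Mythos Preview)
Your proposal is correct and follows exactly the paper's approach: apply Proposition~\ref{prop:modal-depth} with $d=\md(\phi)$ (and handle $\md(\phi)=0$ separately with any $d>0$). You simply make explicit the reachability computation $\bigcup_{n\le m} R^n(0)=(-m,m)$ that the paper leaves implicit.
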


\begin{proof}[Proof of claim]
If the modal depth of $\vf$ is 0, then any $d>0$ will suffice. Otherwise, by Proposition~\ref{prop:modal-depth} we can choose $d$ to be the modal depth of $\phi$. 
\end{proof}


\begin{claim}\label{cl:th2:2}
$\ol{v}(\Di\psi)$ is the disjoint union of finitely many open intervals $\clI_\psi$. 
\end{claim}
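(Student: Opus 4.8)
The plan is to recognize $\ol{v}(\Di\psi)$ as the open $1$-neighborhood of the set $S:=\ol{v}(\psi)$ restricted to $V$, and then to bound its number of connected components by a purely geometric counting argument. First I would note that, since $S\sub V$ and the relation is $x\,R\,y\iff|x-y|<1$, truth of $\Di\psi$ at a point of $V$ unwinds to
\[
\ol{v}(\Di\psi)\;=\;\{x\in V: |x-y|<1\text{ for some }y\in S\}\;=\;N\cap V,\quad\text{where }N:=\bigcup_{y\in S}(y-1,\,y+1).
\]
This already exhibits $\ol{v}(\Di\psi)$ as an intersection of $V$ with a union of open intervals, hence as an open set, and every open subset of $\R$ is a disjoint union of open intervals (its connected components). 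So the only genuine content of the claim is that this union is \emph{finite}.

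For finiteness, the key observation I would establish is that every connected component of $N$ has length at least $2$. Indeed, any point $x\in N$ lies in some interval $(y-1,y+1)$ with $y\in S$; this interval is connected and contained in $N$, so it lies entirely inside the component of $N$ that contains $x$, which therefore has length $\ge 2$. Since $S\sub V=(-d,d)$, the set $N$ is contained in the bounded interval $(-d-1,\,d+1)$ of length $2d+2$, and its components are pairwise disjoint. A length count then gives at most $d+1$ components, so $N$ is a \emph{finite} disjoint union of open intervals.

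To finish, I would intersect with $V$: the intersection of an open interval with the open interval $V$ is again an open interval or empty, and intersecting pairwise-disjoint sets with the single interval $V$ cannot merge or split them. Hence $\ol{v}(\Di\psi)=N\cap V$ is a finite disjoint union of open intervals, and I would let $\clI_\psi$ denote this finite family.

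I do not anticipate a serious obstacle; the one point worth flagging is that no induction on the structure of $\psi$ is required — the argument is entirely insensitive to the internal form of $S=\ol{v}(\psi)$ and applies verbatim to an arbitrary bounded subset of $\R$, even a pathological one such as a Cantor-type set. The tempting worry that such an $S$ might force infinitely many components is exactly what the ``length $\ge 2$ per component'' bound rules out, so that bound is the crux of the proof.
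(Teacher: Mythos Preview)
Your argument is correct and follows the same underlying route as the paper: recognize $\ol{v}(\Di\psi)$ as a union of open intervals of length~$2$ intersected with the bounded interval $V$, decompose into connected components, and use a length count to bound their number. Your execution is actually cleaner than the paper's: you first form the unrestricted neighborhood $N=\bigcup_{y\in S}(y-1,y+1)$ in $\R$, observe directly that every component of $N$ has length $\ge 2$, count against the total length $2d+2$, and only then intersect with $V$. The paper works inside $V$ from the start and finishes with the assertion that ``at most two of these intervals have points outside of $\ol{v}(\psi)$,'' which is at best opaque and on its face false (take $\ol{v}(\psi)$ to be three singletons spaced more than $2$ apart). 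Your separation of $N$ from $V$ sidesteps this and makes the ``length $\ge 2$ per component'' bound --- which you rightly identify as the crux --- completely transparent.
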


\begin{proof}[Proof of claim]
If $x\in\ol{v}(\Di\psi)$ then there is $y\in\ol{v}(\psi)$ with $x\in(y-1,y+1)$ and with the intersection of this interval with $V$ contained in $\ol{v}(\Di\psi)$. In particular, $\ol{v}(\Di\psi)$ is open in $\R$ and so can be uniquely expressed as the union of a family $\clI_\psi$ of at most countably many pairwise disjoint non-empty open intervals. At most two of these intervals have points outside of $\ol{v}(\psi)$ and it follows that $\clI_\psi$ is finite.    
\end{proof}



Recall our usage of ``close'' to mean two points distance strictly less than 1 from each other. 
For a formula $\psi$ we define the set $D_\psi$ of $\psi$-sandwiched points by declaring $x\in D_\psi$ if there are points in $\ol{v}(\psi)$ strictly smaller and larger than $x$ but close to $x$. 
\Sp

\begin{claim}\label{cl:th2:3}
$D_\psi$ is the disjoint union of finitely many open intervals $\clJ_\psi$. 
\end{claim}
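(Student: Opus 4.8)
The plan is to split the sandwiching condition into its two one-sided halves and treat each separately. Writing $A=\ol{v}(\psi)$, a point $x$ lies in $D_\psi$ exactly when $A\cap(x-1,x)\neq\emptyset$ and $A\cap(x,x+1)\neq\emptyset$; so $D_\psi=L\cap U$, where $L=\{x\mid A\cap(x-1,x)\neq\emptyset\}$ records a close point of $A$ below $x$ and $U=\{x\mid A\cap(x,x+1)\neq\emptyset\}$ records one above. Each of $L$ and $U$ is open, being a union of open unit intervals: $L=\bigcup_{a\in A}(a,a+1)$ and $U=\bigcup_{a\in A}(a-1,a)$. So the whole task reduces to showing that $L$, and symmetrically $U$, is a union of \emph{finitely many} open intervals; the intersection $D_\psi=L\cap U$ of two such sets is then again a finite union of open intervals, and these pieces can be taken pairwise disjoint, as desired.

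First I would pin down where $L$ can fail to be connected. Since $L$ is open and bounded --- bounded because $A\subseteq V=(-d,d)$ by Claim~\ref{cl:th2:1} --- it is a disjoint union of open intervals, and the only question is whether there are finitely many. A point $x$ lies outside $L$ precisely when the unit interval $(x-1,x)$ misses $A$ entirely, i.e. when $(x-1,x)$ is contained in a maximal gap of $A$ (a maximal open interval disjoint from $A$). As $(x-1,x)$ has length $1$, this can happen only inside a gap of length at least $1$; conversely every gap $(g_1,g_2)$ of $A$ with $g_2-g_1\geq 1$ deletes from $L$ exactly those $x$ with $(x-1,x)\subseteq(g_1,g_2)$. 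Thus the bounded complementary components of $L$ correspond exactly to the maximal gaps of $A$ of length at least $1$.

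The crux --- and the only real obstacle --- is the finiteness count, which is precisely where the boundedness from Claim~\ref{cl:th2:1} is used: the maximal gaps of $A$ are pairwise disjoint and all lie in $[\inf A,\sup A]\subseteq[-d,d]$, so at most $2d$ of them can have length $\geq 1$. Hence $L$ has finitely many complementary gaps, so finitely many components. The subtlety worth flagging is that $A=\ol{v}(\psi)$ may be a genuinely wild subset of $V$ --- it is an arbitrary Boolean combination of valuation sets, with only the outermost $\Di$'s controlled by Claim~\ref{cl:th2:2} --- so one cannot simply read off the interval structure of $D_\psi$ from that of $A$; the mechanism that rescues us is that the length-$1$ windows $(x\mp 1,x)$ automatically bridge every gap of $A$ shorter than $1$, leaving only the finitely many long gaps to survive. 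Running the identical argument for $U$ and intersecting then yields that $D_\psi$ is the disjoint union of finitely many open intervals $\clJ_\psi$.
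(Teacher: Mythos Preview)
Your proof is correct and takes a genuinely different route from the paper's. The paper works directly with $D_\psi$: it shows that for any two distinct components $(a,b),(c,d)\in\clJ_\psi$ with $b\le c$ one has $d-a>1$, by a case split on whether the boundary point $b\notin D_\psi$ fails the left-witness or the right-witness condition; this spacing then forces $\clJ_\psi$ to be finite inside the bounded interval $V$. Your decomposition $D_\psi=L\cap U$ into the two one-sided conditions is a cleaner organizing idea: once one sees that $L=\bigcup_{a\in A}(a,a+1)$, the components of $L$ are cut only by the (finitely many) gaps of $\overline{A}$ of length $\ge 1$, and the same packing-in-$[-d,d]$ argument that the paper needs implicitly now becomes the entire content. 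What you gain is the elimination of the somewhat delicate two-case analysis at the boundary point; what the paper's approach gains is that it never needs to talk about gaps of $A$ or $\overline{A}$ at all. One small remark: your ``maximal open interval disjoint from $A$'' is exactly a component of $\R\setminus\overline{A}$, and it is that identification (endpoints lie in $\overline{A}$) which justifies saying that the bounded gaps lie inside $[\inf A,\sup A]$; this is routine but worth one sentence when you write it up.
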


\begin{proof}[Proof of claim]
Notice that $D_\psi$ is open, so it is the union of a unique family $\clJ_\psi$ of disjoint non-empty open intervals. We claim that if $(a,b)$, $(c,d)\in\clJ_\psi$ and $b\leq c$, then $d(a,d)> 1$. Assume not, so  $d(a,d)\leq 1$, and in particular $d(a,b)<1$. We have $b\notin D_\psi$. Consider two cases. First, assume that there are no points in $\vext(\psi)$ close to $b$ from the left, so $(b-1,b)\cap \vext(\psi)=\emp$.  
Choose $x\in(a,b)$. Since $x\in D_\psi$ there is $y\in\ol{v}(\psi)$ with $x-1<y<x$. Then $a-1<x-1<y\leq b-1<a$. So $y$ is in $\ol{v}(\psi)$ and is close to $a$ from the left. Since $a$ is not $\psi$-deep, there are no points in $\vext(\psi)$ close to $a$ from the right. So  $(a,a+1)\cap \ol{v}(\psi)$ is empty, and consequently $(b-1,a+1)\cap \ol{v}(\psi)$ is empty. Consider a point $z$ in $(c,d)$. We have $b-1<z-1$ and since $d(a,d)\leq 1$ also $z<a+1$. So $z$ in not $\psi$-deep, which is a contradiction. 

Now assume that there are no points in $\vext(\psi)$ close to $b$ from the right. Let $a<x<b$. Since $x$ is $\psi$-deep, there is $y\in \vext(\psi)$ with $a<x<y\leq b$. We have $d-y<1$, and since $d$ in not $\psi$-deep, there are no points in $\vext(\psi)$ close to $d$ from the right. Choose $z\in (c,d)$. Since $z\in D_\psi$, there is $w\in\ol{v}(\psi)$ that is close to $z$ from the right. This $w$ must be in $(z,d]$, so is close to $c$ from the right since $d(a,d)\leq 1$. Thus there are elements of $\ol{v}(\psi)$ on either side of $c$, giving $c\in D_\psi$, a contradiction. 
\end{proof} 

For each formula $\psi$ we have that $\ol{v}(\Di\psi)$  and $D_\psi$ are the the union of finitely many pairwise disjoint intervals $\clI_\psi$ and $\clJ_\psi$ respectively. Let $X$ be the set of all points that occur as an endpoint of some interval occurring in $\clI_\psi$ or $\clJ_\psi$ for some $\psi$ with $\Di\psi$ among the subformulas $\Psi$  of $\phi$. Clearly, $X$ is finite and in view of Lemma \ref{lem:RvsQ:>1:rationalVal}  and Remark~\ref{rem: auto <1} that follows it,  
we can assume that each member of $X$ is rational. So there is a integer $N\geq 1$ so that each member of $X$ is an integer multiple of $r=\frac{1}{N}$. Let $\clK$ be the set of open intervals in $V$ of form $(mr,mr+r)$ with $m$ integer, and let $P$ be the set of their endpoints in $V$.
\Sp

\begin{claim}\label{cl:th2:4}
If $I\in\clK$ and $J$ belongs to $\clI_\psi$ or $\clJ_\psi$ for some $\psi$ with $\Di\psi\in\Psi$,  either $I$ is either contained in $J$ or disjoint from $J$. 
\end{claim}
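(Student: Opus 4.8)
The plan is to exploit the grid structure set up in the preceding paragraphs: by construction every endpoint of an interval in $\clI_\psi$ or $\clJ_\psi$ lies in $X$ and is therefore an integer multiple of $r=\frac{1}{N}$, while the members of $\clK$ are exactly the open gaps $(mr,mr+r)$ between consecutive multiples of $r$. Since no multiple of $r$ lies strictly inside such a gap, a cell $I\in\clK$ can never straddle an endpoint of $J$, and this alone forces the desired dichotomy. The whole argument is thus a refinement-of-partitions observation rather than anything requiring the logic.

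Concretely, I would first write $I=(mr,mr+r)$ for some integer $m$ and $J=(\alpha,\beta)$, where $\alpha,\beta$ are the endpoints of $J$. By the definition of $X$ these endpoints are integer multiples of $r$; note that the only endpoints not automatically coming from $X$ are the boundary points $\pm d$ of $V$, but $d$ was taken to be the modal depth of $\phi$ in Claim~\ref{cl:th2:2}'s setup, hence an integer and so again a multiple of $r$. Every endpoint in sight is therefore a multiple of $r$, and, since $V$ is bounded, there is no $\pm\infty$ endpoint to worry about.

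Next I would settle the dichotomy directly. If $\alpha\le mr$ and $mr+r\le\beta$, then any $x\in I$ satisfies $\alpha\le mr<x<mr+r\le\beta$, so $x\in J$ and $I\subseteq J$. Otherwise at least one of these inequalities fails. If $\alpha\le mr$ fails, then $\alpha>mr$; as $\alpha$ and $mr$ are both multiples of $r$, this upgrades to $\alpha\ge mr+r$, so every point of $J$ exceeds $mr+r$ while every point of $I$ lies below it, giving $I\cap J=\emptyset$. Symmetrically, if $mr+r\le\beta$ fails, then $\beta<mr+r$ forces $\beta\le mr$, again yielding $I\cap J=\emptyset$.

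I do not expect a genuine obstacle here, since the reasoning is pure interval bookkeeping once the grid is in place. The one point deserving care — and the place where the earlier development is actually used — is the verification that every endpoint of $J$ belongs to $X$ and is a multiple of $r$; this rests on Claims~\ref{cl:th2:2} and~\ref{cl:th2:3}, which exhibit $\ol{v}(\Di\psi)$ and $D_\psi$ as finite disjoint unions of open intervals, together with the definition of $X$ as the set of all their endpoints and the choice of $N$ making each element of $X$ an integer multiple of $r$. Granting this, the common multiples-of-$r$ structure of $I$ and $J$ does all the work.
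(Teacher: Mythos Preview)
Your proof is correct and follows essentially the same idea as the paper's one-line argument: if $I$ were neither contained in nor disjoint from $J$, an endpoint of $J$ would have to lie in the open interval $I=(mr,mr+r)$, which is impossible since the endpoints of $J$ belong to $X$ and hence are multiples of $r$. Your direct case analysis simply unfolds this contrapositive. One small remark: your aside about $\pm d$ is unnecessary, since by definition $X$ already contains \emph{all} endpoints of intervals in $\clI_\psi$ and $\clJ_\psi$, so there are no ``endpoints not automatically coming from $X$'' to worry about.
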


\begin{proof}[Proof of claim]
Otherwise an endpoint of $J$ would be an internal point of $I$. 
\end{proof}

\begin{claim}\label{cl:th2:5}
If $x,y$ belong to an interval in $\clK$ and $\Di\psi\in\Psi$ then 
\begin{equation*}\label{eq:RvsQ:<1:stepV1refined}
V,x\mo_{\val} \Di  \psi\,\,  \Leftrightarrow\,\, V,y\mo_{\val}  \Di  \psi  
\end{equation*}
\vspace{-4ex}
\begin{equation*}\label{eq:RvsQ:<1:deep}
x\in D_\psi\,\,\Leftrightarrow\,\, y\in D_\psi
\end{equation*}
\vspace{-2ex}
\end{claim}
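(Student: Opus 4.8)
The plan is to reduce both equivalences to Claim~\ref{cl:th2:4}, which already contains the essential work. The guiding idea is that truth of $\Di\psi$ and membership in $D_\psi$ are each governed by membership in one of the finite interval families $\clI_\psi$, $\clJ_\psi$, and that Claim~\ref{cl:th2:4} forces every $I\in\clK$ to lie either wholly inside or wholly outside each of these intervals.

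First I would translate the two conditions in the claim into interval membership. By Claim~\ref{cl:th2:2}, $V,z\models_v\Di\psi$ holds iff $z\in\ol{v}(\Di\psi)=\bigcup\clI_\psi$; by Claim~\ref{cl:th2:3}, $z\in D_\psi$ iff $z\in\bigcup\clJ_\psi$. So each of the two conditions has the shape ``$z$ lies in the union of a finite disjoint family of open intervals, all of whose endpoints belong to $X$.''

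Next I would show that such a union meets a fixed $I\in\clK$ in either all of $I$ or none of it. Writing $\mathcal{F}$ for either $\clI_\psi$ or $\clJ_\psi$, Claim~\ref{cl:th2:4} says each $J\in\mathcal{F}$ contains $I$ or is disjoint from $I$. If some member of $\mathcal{F}$ contains $I$ then $I\subseteq\bigcup\mathcal{F}$; otherwise every member is disjoint from $I$, so $I\cap\bigcup\mathcal{F}=\emp$. Either way the value of the predicate ``$z\in\bigcup\mathcal{F}$'' is constant as $z$ ranges over $I$. Taking $\mathcal{F}=\clI_\psi$ then yields $V,x\models_v\Di\psi\Leftrightarrow V,y\models_v\Di\psi$ for $x,y\in I$, and taking $\mathcal{F}=\clJ_\psi$ yields $x\in D_\psi\Leftrightarrow y\in D_\psi$.

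I do not expect a genuine obstacle here: the real content sits in Claim~\ref{cl:th2:4}, whose proof rests on the endpoints of every $J\in\clI_\psi\cup\clJ_\psi$ lying in $X$ and hence being integer multiples of $r$, while the interior of $I=(mr,mr+r)$ contains no such multiple. The only thing I would be careful to record is that Claim~\ref{cl:th2:4} applies verbatim to both families $\clI_\psi$ and $\clJ_\psi$, so a single argument delivers both equivalences at once.
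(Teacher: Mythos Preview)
Your proposal is correct and follows essentially the same approach as the paper: translate each condition into membership in $\bigcup\clI_\psi$ or $\bigcup\clJ_\psi$, then use Claim~\ref{cl:th2:4} to conclude that an interval $I\in\clK$ lies entirely inside or entirely outside each member of these families. The paper's proof is just a terser version of yours, invoking the same dichotomy without explicitly naming Claim~\ref{cl:th2:4}.
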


\begin{proof}
For the first statement we have  $V,x\models_v\Di\psi$ iff $x\in\ol{v}(\Di\psi)$, which occurs iff $x$ belongs to an interval in $\clI_\psi$. The result follows since $x,y$ either both belong to an interval in $\clI_\psi$ or neither does. The second statement is similar, either both belong to an interval in $\clJ_\psi$ or neither does. 
\end{proof}

Due to the previous two claims, intervals in $\clK$ will play an important role in later arguments. We next collect some simple arithmetic consequences of the fact that they divide $V$ into pices of equal length $r$. Suppose $I=(a,a+r)\in\clK$. 
\vspace{4ex}

\begin{center}
\begin{tikzpicture}
\draw (-6,0)--(6,0); 
\node at (-.6,0) {$($};
\node at (.6,0) {$)$};
\node at (-.6,-.5) {$_a$};
\node at (.6,-.5) {$_{a+r}$};
\node at (-5.8,0) {$($};
\node at (-4.6,0) {$)$};
\node at (-5.8,-.5) {$_{a-1}$};
\node at (-4.6,-.5) {$_{a-1+r}$};
\node at (-4.5,0) {$($};
\node at (-3.4,0) {$)$};
\node at (-3.2,-.5) {$_{a-1+2r}$};
\node at (5.8,0) {$)$};
\node at (4.6,0) {$($};
\node at (5.8,-.5) {$_{a+1+r}$};
\node at (4.6,-.5) {$_{a+1}$};
\node at (4.5,0) {$)$};
\node at (3.4,0) {$($};
\node at (3.3,-.5) {$_{a+1-r}$};
\end{tikzpicture}   
\end{center}
\vspace{3ex}

\noindent We say that $I$ is \emph{close} to an interval $J\in\clK$ if there are $x\in I$ and $y\in J$ with $x,y$ close. If they do not lie outside the range $(-d,d)$, the intervals that are close to $I$ are shown in the diagram above. An interval that is close to $I$ and is not $(a-1,a-1+r)$ or $(a+1,a+1+r)$ is \emph{very close} to $I$. The relations of close and very close are symmetric and we say intervals are close or very close. We say that $x,y$ are \emph{very close} if there are $I,J\in\clK$ that are very close with $x\in I$ and $y\in J$. The following is evident. 
\Sp


\begin{claim}\label{cl:th2:6}
Let $I,J\in\clK$. 
\vspace{1ex}
\begin{enumerate}
\item $I,J$ are close iff every point in one interval is close to a point in the other.
\item $I,J$ are very close iff every point in one is close to every point in the other. 
\end{enumerate}
\end{claim}
\Sp

Having established the setup, we make a first step to construct a valuation on $\Q$ that satisfies $\phi$. 
\Sp

\begin{claim}\label{cl:th2:7} 
There is a countable subframe $U$ of $V$ with valuation $u(p)=v(p)\cap U$ so that the following hold
\end{claim}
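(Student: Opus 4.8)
The plan is to construct $U$ as a countable, suitably dense subset of $V$ and then verify the listed properties by induction on subformulas, the only delicate point being the preservation of modal witnesses. The key structural observation is that, since $V=(-d,d)$ is bounded and the intervals in $\clK$ all have length $r=\frac{1}{N}$, the family $\clK$ is \emph{finite}; this is exactly what will let $U$ stay countable while still being built as a union of dense pieces.

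I would build $U$ as a finite union of countable sets, as follows. First, for each $I\in\clK$ choose a countable dense subset $D_I$ of $I$. Second, and crucially for witnessing, for each $\psi\in\Psi$ choose a countable subset $D^{\psi}$ of $\ol{v}(\psi)$ that is dense in $\ol{v}(\psi)$ in the subspace topology; such a $D^{\psi}$ automatically contains every isolated point of $\ol{v}(\psi)$. Then set
\[
U\,=\,\{0\}\cup P\cup\bigcup_{I\in\clK}D_I\cup\bigcup_{\psi\in\Psi}D^{\psi},
\]
which is countable, contains $0$ and the (rational) endpoints $P$, and meets every $I\in\clK$ in a dense set. Put $u(p)=v(p)\cap U$.

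The heart of the matter is to show that for every $a\in U$ and every $\psi\in\Psi$ we have $V,a\models_v\psi$ iff $U,a\models_u\psi$, which I would prove by induction on $\psi$. Variables hold because $u(p)=v(p)\cap U$ and $a\in U$, and the Boolean cases are immediate. For $\psi=\Di\gamma$ (so $\gamma\in\Psi$ by subformula-closure) the implication from $U$ to $V$ is automatic, since a witness in $U$ is a witness in $V$. For the converse, suppose $V,a\models_v\Di\gamma$, so there is $b\in\ol{v}(\gamma)$ with $d(a,b)<1$; as $(a-1,a+1)$ is an open neighbourhood of $b$ and $D^{\gamma}\subseteq U$ is dense in $\ol{v}(\gamma)$, I can pick $b'\in D^{\gamma}\cap(a-1,a+1)$ (taking $b'=b$ when $b$ is isolated in $\ol{v}(\gamma)$, and otherwise any $D^{\gamma}$-point near enough to $b$). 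Then $b'\in\ol{v}(\gamma)\cap U$ and $d(a,b')<1$, so by the inductive hypothesis $U,b'\models_u\gamma$, hence $U,a\models_u\Di\gamma$. Evaluating at $a=0$ gives $U,0\models_u\phi$.

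The remaining content is the combinatorial rigidity of the closeness relation on $U$, which is precisely what the subsequent embedding of $U$ into $\Q$ will exploit, and where the machinery of Claims~\ref{cl:th2:4}--\ref{cl:th2:6} becomes essential. By Claim~\ref{cl:th2:4} each $I\in\clK$ lies inside or outside each interval of $\clI_\psi$ and $\clJ_\psi$, so by Claims~\ref{cl:th2:5} and~\ref{cl:th2:6} the truth of $\Di\psi$ and membership in $D_\psi$ are constant on $I$, and whether two points of $U$ are close is largely governed by which intervals of $\clK$ they occupy: points in \emph{very close} intervals are always close, while points in intervals that are close but not very close are close only near the shared boundary. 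I expect the main obstacle to lie in exactly this boundary case: unlike the farness argument, closeness is not determined by the unordered pair of $\clK$-intervals alone, so the statement must record enough density ($D_I$ dense in each $I$, together with the $\psi$-sandwiched sets $\clJ_\psi$ of Claim~\ref{cl:th2:3}) to guarantee that the close-but-not-very-close adjacencies realized in $V$ survive when $U$ is transported into $\Q$, using the rationality of $X$ secured via Lemma~\ref{lem:RvsQ:>1:rationalVal} and Remark~\ref{rem: auto <1}.
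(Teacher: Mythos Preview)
Your construction is correct and your inductive argument for item~(3) goes through: the key point---that a dense subset $D^{\gamma}\subseteq\ol{v}(\gamma)$ meets every nonempty relatively open subset of $\ol{v}(\gamma)$, in particular $\ol{v}(\gamma)\cap(a-1,a+1)$---is exactly what is needed to produce a $U$-witness for $\Di\gamma$. Items~(1), (2), (4) follow readily from the way you assembled $U$.

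The paper takes a different route: rather than building $U$ by hand and proving~(3) by induction, it applies the downward L\"owenheim--Skolem theorem. One packages $R$, the order $\leq$, constants for a countable set $Z$ containing $P$ and dense subsets of each $\ol{v}(\psi)$, and unary predicates for the sets $v(p)$ into a countable first-order language, and extracts a countable elementary substructure $U$. Item~(3) then falls out immediately from the standard translation of modal satisfaction into first-order logic, with no induction on formulas required. This buys a bit more: the paper's $U$ satisfies~(3) and~(4) for \emph{all} modal formulas, not only the finitely many $\psi\in\Psi$, whereas your construction only delivers these for $\psi\in\Psi$. That said, only the subformulas of $\vf$ are ever used in Claims~\ref{cl:th2:8}--\ref{cl:th2:10}, so your weaker version suffices for the application. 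Your argument is more elementary and self-contained; the paper's is shorter and sidesteps the modal induction entirely.

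Your final paragraph is really a preview of Claims~\ref{cl:th2:8}--\ref{cl:th2:10} rather than part of the proof of Claim~\ref{cl:th2:7}; it is not wrong, but it does not belong in this proof.
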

\Sp

\begin{enumerate}
\item $U$ is a dense linear order without endpoints
\item $U$ contains all endpoints of intervals in $\clK$
\item for all $x\in U$ and formulas $\psi$ we have $U,x\models_u\psi$ iff $V,x\models_v\psi$
\item for each formula $\psi$, 
$\ol{u}(\psi)$ is dense in $\ol{v}(\psi)$
\end{enumerate}
\Sp

\begin{proof}[Proof of claim]
We make use of the downward L\"{o}wenheim-Skolem theorem. First, it is well-known that each subset of the reals has an at most countable dense subset and that the union of at most countably many countable sets is countable. So we can find a countable subset $Z$ of $V$ that contains endpoints of intervals in $\clK$ and contains a dense subset of $\ol{v}(\psi)$ for each formula $\psi$. We consider a countable language in which $V$ is a model where closeness $R$ and the partial ordering of $V$ are binary predicates, there are constant symbols $c_z$ interpreted as $z$ for each $z\in Z$, and each interval $I\in\clK$ is viewed as a unary predicate of $V$. For each variable $p$ let $Q_p$ be a unary predicate symbol that is interpreted as $v(p)$. Then the downward L\"{o}wenheim-Skolem theorem gives a countable elementary substructure $U$ of $V$. We use $*$ to indicate the interpretations of the components of the language in the substructure $U$. So 
\begin{align*}
R^*\,&=\mbox{ the restriction of $R$ to $U$}\\
\leq^*\,&=\mbox{ the restriction of $\leq$ to $U$}\\
c_z^*\,&=\,z \mbox{ for each }z\in Z\\
I^*\,&=\,I\cap U\mbox{ for each }I\in\clK\\
Q_p^*\,&=\,Q_p\cap U\mbox{ for each variable }p
\end{align*}

\noindent Then, ignoring parts of the structure, $(U,R^*)$ is a frame and we let $u$ be the valuation on this frame with $u(p)=v(p)\cap U$ for each variable $p$.

The first part of our claim follows from the fact that $V$ is a dense linear order without endpoints, and this is a first-order property. The second statement follows since $U$ contains $Z$ and $Z$ contains the endpoints of intervals in $\clK$. The third statement follows from that fact that modal satisfaction is a first-order property. In more detail, for a modal formula $\psi$, point $x\in V$, and a valuation $v$ in $V$, there is a first-order formula $\hat{\psi}$ in the language containing a binary predicate for the relation of the frame and unary predicates for the sets $v(p)$ where $p$ is a variable in $\psi$ so that 
\[
V,x\models_v\psi\quad\Leftrightarrow\quad V\models\hat{\psi}(x)
\]

\noindent Here the first satisfaction symbol is in the modal sense and the second is in the first-order sense. See e.g., \cite[Proposition 2.47]{BDV} for a proof of this statement. Then, if $x\in U$, since the first-order structures $U$ and $V$ are elementarily equivalent where the unary predicates for $v(p)$ are interpreted as $u(p)$, we have $V\models\hat{\psi}(x)$ iff $U\models\hat{\psi}(x)$ and repeating the previous argument, $U\models\hat{\psi}(x)$ iff $U,x\models_u\psi$.

For the final statement, for $x\in U$ the third statement gives $x\in\ol{u}(\psi)$ iff $x\in\ol{v}(\psi)$, and it follows that $\ol{u}(\psi)=\ol{v}(\psi)\cap U$. That $\ol{u}(\psi)$ is dense in $\ol{v}(\psi)$ then follows since $Z$ is contained in $U$ and $Z$ contains a dense subset of $\ol{v}(\psi)$ for each formula $\psi$.
\end{proof} 

Since by part (1) of  
Claim \ref{cl:th2:7}
$U$ is a dense linear order without endpoints, so is each open interval $(a,b)$ in $U$. So by Cantor's theorem, for each interval $I=(a,b)$ in $\clK$ there is an order-isomorphism $g_I:I\cap U\to I\cap\Q$. By part (2) of 
Claim \ref{cl:th2:7}
$U$ contains the endpoints of intervals in $\clK$. So taking the union of these maps and setting $W=V\cap\Q$ we obtain an order-isomorphism $g:U\to W$ that restricts to the identity on the endpoints of intervals in $\clK$. While $g$ need not preserve distances, we have the following. 
\Sp

\begin{claim}\label{cl:th2:8} 
Let $x,y\in U$, $I\in\clK$, $a$ be an endpoint of $I$ and $\Di\psi\in\Psi$. Then  
\Sp

\begin{enumerate}
\item $x\in I$ iff $g(x)\in I$
\item if one of $x,y$ is an endpoint of $I$ then $x,y$ are close iff $g(x),g(y)$ are close
\item if $x,y$ are very close, then $g(x),g(y)$ are close. 
\item if $a$ is a limit point of $Y\subseteq I\cap U$, then $a$ is a limit point of $g[Y]$. 
\end{enumerate}
\end{claim}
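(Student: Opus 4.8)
The plan is to treat the four assertions separately, leaning throughout on two facts established above: that $g$ restricts on each cell $I\in\clK$ to the order-isomorphism $g_I\colon I\cap U\to I\cap\Q$ and is the identity on the cell endpoints, and that the mesh $r=\frac1N$ of $\clK$ divides~$1$, so that $1=Nr$. Statement~(1) needs no real work and I would dispose of it in a sentence: for $x\in I\cap U$ we have $g(x)=g_I(x)\in I$; if $x$ lies in a different cell its image lands in that cell, disjoint from $I$; and if $x$ is an endpoint then $g(x)=x\notin I$ since $I$ is open. I would also note at the outset that the subformula $\Di\psi$ plays no role in any of the four statements and is recorded only for the later application.

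The arithmetic input $1=Nr$ is exactly what makes statement~(2) work, and this is where I would put the heart of the argument. Write the endpoint as $a=m'r$; then $a-1=(m'-N)r$ and $a+1=(m'+N)r$ are again cell endpoints. Hence the open interval $(a-1,a+1)=\{\,t : d(a,t)<1\,\}$ is a union of whole cells of $\clK$, so any single cell $J$ is either contained in it or disjoint from it (distance exactly $1$ from $a$ occurs only at the endpoints $a\pm1$, never inside an open cell). Since $g$ fixes $a$ and maps $y$ into the cell that already contains $y$, the points $y$ and $g(y)$ lie in a common cell and therefore share the same closeness relation to $a$; this yields the ``iff'' of~(2). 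Statement~(3) then falls out immediately: if $x,y$ are very close there are very close cells containing them, statement~(1) keeps $g(x),g(y)$ inside those same cells, and Claim~\ref{cl:th2:6}(2) guarantees that every point of the one cell is close to every point of the other.

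The one place I expect to need care is statement~(4), precisely because $g$ is only order-preserving and carries no metric information, so the naive continuity argument is unavailable and I would instead argue entirely within the order. Take $a$ to be the left endpoint of $I$, the right-endpoint case being symmetric under reversing the order; then ``$a$ is a limit point of $Y\subseteq I\cap U$'' says exactly that $\inf Y=a$. Suppose for contradiction that $\inf g[Y]=b>a$, choose a rational $q$ with $a<q<b$, and use the surjectivity of $g_I$ onto $I\cap\Q$ to write $q=g(z_0)$ for some $z_0\in I\cap U$. Then $q<g(z)$ for every $z\in Y$, so order-preservation forces $z_0<z$ for all $z\in Y$; but $z_0>a$, contradicting $\inf Y=a$. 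Hence $\inf g[Y]=a$, that is, $a$ is a limit point of $g[Y]$. The only subtlety worth flagging is that this order argument depends on the surjectivity (equivalently, the density of $I\cap\Q$) built into $g_I$, which is precisely what lets it stand in for a metric one.
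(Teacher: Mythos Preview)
Your proof is correct and follows essentially the same approach as the paper's: parts (1)--(3) are argued identically via the cell-preservation property of $g$ together with the fact that $1=Nr$ makes the relevant closeness regions unions of whole cells, and your observation that the hypothesis $\Di\psi\in\Psi$ is unused is accurate. For (4) the paper simply says ``$g_I$ is an order-isomorphism so it preserves meets and joins,'' whereas you unpack this into the explicit contradiction argument using surjectivity of $g_I$; this is the same idea, just spelled out more carefully.
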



\begin{proof}[Proof of claim]
The first statement follows from the definition of $g$. For the second statement, if $x$ is an endpoint of $I$, then $x,y$ are close iff $y$ belongs to one of the family of intervals shown in the illustration before 
Claim \ref{cl:th2:6}.
Since $g(x)=x$, we have $g(x),g(y)$ are close iff $g(y)$ belongs to one of these same intervals. The result then follows from the first statement. For the third statement, if $x\in I=(a,a+r)$, then $x,y$ are close iff $y$ belongs to one of the intervals not at the end in the illustration before 
Claim \ref{cl:th2:6}.
Since $g(x)\in I$, we have $g(x),g(y)$ close iff $g(y)$ belongs to one of these same intervals, and the result follows by the first statement. For the final statement, $a$ being a limit point of $Y$ is equivalent to $a$ being the meet or join of $Y$, and $g_I$ is an order-isomorphism so it preserves meets and joins. 
\end{proof} 

Let $gu$ be the valuation on $W$ given by $gu(p)=g[u(p)]$ for each variable $p$. We will show that for all $x$ in $U$, and for all subformulas $\sigma$ of $\vf$, we have 
\begin{equation}\label{eq:RvsQ:<1:step4}
U,x\mo_\valu \sigma\quad\Leftrightarrow\quad W,g(x)\mo_{g\valu} \sigma,
\end{equation}

\noindent The proof is by induction on the complexity of $\psi$. The base cases that $\psi$ falsity or a variable follows trivially or from the definition of the valuation. The case of implication is trivial. It remains to consider the case when $\sigma$ is $\Di\psi$. 
\Sp

\begin{claim}\label{cl:th2:9} 
$W,g(x)\models_{gu}\Di\psi\, \Rightarrow\, U,x\models_u\Di\psi$. 
\end{claim}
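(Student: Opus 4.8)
The plan is to collapse the whole statement onto the real interval $V$ and then exploit that the glued order-isomorphism $g$ preserves membership in the cells of $\clK$. First, by part~(3) of Claim~\ref{cl:th2:7} we have $U,x\models_u\Di\psi$ iff $V,x\models_v\Di\psi$, so it suffices to prove $x\in\ol v(\Di\psi)$; this lets us argue entirely inside $V$ and use arbitrary real witnesses rather than points of $U$. From the hypothesis $W,g(x)\models_{gu}\Di\psi$ choose $w\in W$ with $d(g(x),w)<1$ and $W,w\models_{gu}\psi$, and set $y=g^{-1}(w)\in U$. The inductive hypothesis \eqref{eq:RvsQ:<1:step4}, applied to the proper subformula $\psi$, gives $U,y\models_u\psi$, so $y\in\ol u(\psi)\subseteq\ol v(\psi)$. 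Thus we hold a point $y\in\ol v(\psi)$ whose $g$-image $g(y)=w$ is close to $g(x)$, and we must produce a real point of $\ol v(\psi)$ close to $x$ itself.

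I would argue by contradiction, assuming $x\notin\ol v(\Di\psi)$. Let $I=(a,a+r)$ be the cell of $\clK$ containing $x$ (the case $x\in P$ is the same argument with $I$ degenerating to $\{x\}$). Since $x\in I$ and $x\notin\ol v(\Di\psi)$, Claim~\ref{cl:th2:4} forces $I\cap\ol v(\Di\psi)=\emptyset$: no point of $I$ has a point of $\ol v(\psi)$ within distance $1$. Writing $E=\bigcup_{s\in I}(s-1,s+1)=(a-1,a+1+r)$, this is exactly the statement $E\cap\ol v(\psi)=\emptyset$. Since $a=mr$ and $1=Nr$, the endpoints of $E$ are integer multiples of $r$, so $E$ is a union of whole cells of $\clK$; and by construction $E$ contains the unit neighborhood of every point of $I$, in particular those of both $x$ and $g(x)$.

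The contradiction then falls out from the facts that $g$ fixes cell-membership and that $E$ respects cells. On one hand $y\in\ol v(\psi)$ gives $y\notin E$; on the other hand $g(x)\in I$ together with $d(g(x),g(y))=d(g(x),w)<1$ gives $g(y)\in(g(x)-1,g(x)+1)\subseteq E$. But by part~(1) of Claim~\ref{cl:th2:8} the points $y$ and $g(y)$ lie in the same cell of $\clK$ (and $g$ fixes the endpoints in $P$), while each cell is, by the alignment of its endpoints with those of $E$, either contained in $E$ or disjoint from it. Hence $y\notin E$ forces $g(y)\notin E$, contradicting $g(y)\in E$. Therefore $x\in\ol v(\Di\psi)$ and, by Claim~\ref{cl:th2:7}(3), $U,x\models_u\Di\psi$.

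The conceptual obstacle the argument must overcome is that $g$ is only an order-isomorphism and distorts distances, so closeness of $g(x),g(y)$ does not by itself yield closeness of $x,y$, and indeed need not. The device that bridges this gap is to replace the single forbidden unit neighborhood of $x$ by the cell-aligned region $E$, which simultaneously absorbs the unit neighborhoods of $x$ and of its image $g(x)$; once $\ol v(\psi)$ is known to be empty on all of $E$, the cell-preservation property of $g$ transports that emptiness back to $y$. The only genuine computation is the elementary interval arithmetic identifying $\bigcup_{s\in I}(s-1,s+1)$ with $(a-1,a+1+r)$ and verifying that its endpoints are multiples of $r$.
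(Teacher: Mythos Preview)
Your proof is correct and is essentially the contrapositive of the paper's direct argument. Where the paper uses Claim~\ref{cl:th2:6} to say that $g(x),g(y)$ close forces their cells $I,J$ to be close, so $y$ is close to some real $z\in I$ (whence $z\in\ol v(\Di\psi)$ and Claim~\ref{cl:th2:5} transfers this to $x$), you instead assume $I\cap\ol v(\Di\psi)=\emptyset$ and encode the same cell-closeness structure via the cell-aligned region $E=(a-1,a+r+1)$; your observation that each cell is contained in or disjoint from $E$ is exactly Claim~\ref{cl:th2:6} in disguise.
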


\begin{proof}[Proof of claim]
Assume $W,g(x)\mo_{g\valu}\Di  \psi$. Since $g$ is a bijection, there is $y\in U$ with $g(y)$ close to $g(x)$ and with $W,g(y)\mo_{g\valu}\psi$. So by the inductive hypothesis, $U,y\models_u\psi$. If either $x,y$ is an endpoint of an interval in $\clK$, then by part (2) of 
Claim \ref{cl:th2:8}
we have that $x,y$ are close, so $U,x\models_u\Di\psi$ as required. If $x\in I$ and $y\in J$ for some $I,J\in\clK$, then by part (1) of 
Claim \ref{cl:th2:8}
we have $g(x)\in I$ and $g(y)\in J$. Since $g(x)$ and $g(y)$ are close in $\Q$, and hence also in $\R$, by 
Claim \ref{cl:th2:6}
the intervals $I,J$ are close, so 
Claim \ref{cl:th2:6}
gives that $y$ is close to some element $z\in I$. Then $U_z\models_u\Di\psi$, so by
Claim \ref{cl:th2:7}
we have $V,z\models_\Di\psi$. Since $x,z$ belong to the same interval in $\clK$, by 
Claim \ref{cl:th2:5}
we have $V,x\models_v\Di\psi$, hence by 
Claim \ref{cl:th2:7},
$U,x\models_u\Di\psi$ as required. 
\end{proof}




\begin{claim}\label{cl:th2:10}
$U,x\models_u\Di\psi\,\Rightarrow\,W,g(x)\models_{gu}\Di\psi$.
\end{claim}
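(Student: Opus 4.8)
The plan is to prove the converse implication by the same case analysis as in Claim~\ref{cl:th2:9}, the essential new difficulty being that $g$ need not preserve distances, so a witness that is merely \emph{close} to $x$ may be carried by $g$ to a point that is no longer close to $g(x)$. First I would unwind the hypothesis: since $U,x\mo_\valu\Di\psi$ there is $y\in U$ with $x,y$ close and $U,y\mo_\valu\psi$, whence $W,g(y)\mo_{g\valu}\psi$ by the inductive hypothesis~(\ref{eq:RvsQ:<1:step4}). If $x,y$ are very close, Claim~\ref{cl:th2:8}(3) shows that $g(x),g(y)$ are close and we are done; if one of $x,y$ is an endpoint of an interval of $\clK$, then Claim~\ref{cl:th2:8}(2) does the same. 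So the only substantial case is that $x,y$ are close but not very close and neither is an endpoint; writing $I=(a,a+r)\in\clK$ for the interval containing $x$, the point $y$ then lies in one of the two extreme intervals, say $y\in(a+1,a+1+r)$ with $x<y<x+1$, the left case being symmetric.

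The heart of the matter is to show that in this remaining case the $\psi$-witnesses accumulate at the \emph{near} endpoint $a+1$ of the extreme interval, so that Claim~\ref{cl:th2:8}(4) applies. I may assume $x$ has no witness that is very close, since otherwise the previous paragraph finishes the proof; and every point of $(a,a+1)$ lies in a cell of $\clK$ that is very close to $I$ and is within distance $<1$ of $x$, so $\ol{v}(\psi)\cap(a,a+1)=\emptyset$. Let $s=\inf\big(\ol{v}(\psi)\cap(a+1,a+1+r)\big)$; then $s\ge a+1$ and, as $s\le y<a+1+r$, we have $s\in[a+1,a+1+r)$. The gap in $\ol{v}(\psi)$ lying immediately below $s$ has length at least $1$, since all witnesses below $s$ are at most $a$. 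Tracing $\ol v(\Di\psi)$ and $D_\psi$ through $s-1$, the point $s-1$ then surfaces as an endpoint of an interval of $\clI_\psi$ (when that gap has length $\ge 2$) or of $\clJ_\psi$ (when it lies in $(1,2)$), while if the gap has length exactly $1$ one gets $s=a+1$ outright. As all endpoints of the decompositions of Claims~\ref{cl:th2:2} and~\ref{cl:th2:3} lie in $X$, and by Lemma~\ref{lem:RvsQ:>1:rationalVal} and Remark~\ref{rem: auto <1} we may take $X$ to consist of multiples of $r$, the point $s$ is a multiple of $r$; since $a+1$ is the unique multiple of $r$ in $[a+1,a+1+r)$, we conclude $s=a+1$. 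Thus $a+1$ is a right limit of $\ol{v}(\psi)$, hence of $\ol{u}(\psi)=\ol{v}(\psi)\cap U$ by density (Claim~\ref{cl:th2:7}(4)), and as $a+1$ is an endpoint of a $\clK$-interval fixed by $g$, Claim~\ref{cl:th2:8}(4) gives that $g[\ol{u}(\psi)\cap(a+1,a+1+r)]$ still accumulates at $a+1$. Since $g(x)\in I$ satisfies $0<a+1-g(x)<1$, some image point $g(z)$ with $W,g(z)\mo_{g\valu}\psi$ is close to $g(x)$, and therefore $W,g(x)\mo_{g\valu}\Di\psi$.

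I expect the main obstacle to be precisely this localization step, namely proving $s=a+1$ rather than some interior point of the extreme interval: it is here that the distortion by $g$ is genuinely dangerous, because an accumulation of witnesses at an interior point could be spread out arbitrarily by the order-isomorphism and pushed past distance $1$ from $g(x)$. What rescues the argument is the refinement of the grid to a common denominator $r=1/N$ for all endpoints occurring in the interval decompositions $\clI_\psi$ and $\clJ_\psi$ — in particular the inclusion into $X$ of the endpoints of the sandwiched sets $D_\psi$ from Claim~\ref{cl:th2:3} — which pins the edges of the maximal witness intervals onto the grid and forces any witness component meeting an extreme interval to begin exactly at its near endpoint. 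The base and Boolean cases of the induction~(\ref{eq:RvsQ:<1:step4}) are immediate, and the symmetric case of a left witness is obtained by reflecting the argument above.
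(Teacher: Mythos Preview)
Your proposal is correct and reaches the same conclusion as the paper, but the route to the key step --- showing that $a+1$ is a limit point of $\ol{v}(\psi)\cap I^+$ --- is genuinely different.

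The paper argues via connectedness: it introduces the open sets
\[
S^\pm \,=\, \{z\in(a,a+r)\mid z\text{ is close to some point of }\ol{v}(\psi)\cap I^\pm\},
\]
observes that they cover $(a,a+r)$ (since the whole cell lies in $\ol{v}(\Di\psi)$ by Claim~\ref{cl:th2:4}), and then uses the $\psi$-sandwiched dichotomy from Claim~\ref{cl:th2:3} to conclude that $S^+$ and $S^-$ are either equal or disjoint; connectedness of $(a,a+r)$ then forces one of them to be the whole cell, and from $S^+=(a,a+r)$ one reads off that witnesses in $I^+$ accumulate at $a+1$. So for the paper the sets $D_\psi$ function as a \emph{dichotomy tool}.

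You instead run a direct gap analysis: setting $s=\inf(\ol{v}(\psi)\cap I^+)$ and letting $(t,s)$ be the maximal $\psi$-free interval below $s$, you split on the gap length and argue that $s-1$ is forced to be an endpoint of $\clI_\psi$ (gap $\ge 2$) or of $\clJ_\psi$ (gap $\in(1,2)$), hence lies in $X$, hence is a multiple of $r$, whence $s=a+1$. Here the sets $D_\psi$ function instead as a \emph{source of grid points}. This avoids the connectedness argument entirely, at the cost of a case split; one small subtlety worth making explicit is that in the borderline case of gap length exactly $2$, the point $s-1$ appears as an isolated \emph{puncture} of $\ol{v}(\Di\psi)$ (both neighbouring cells lie in $\ol{v}(\Di\psi)$ while $s-1$ itself does not), which still makes it an endpoint in $\clI_\psi$. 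A couple of transitions between $\ol{u}(\psi)$ and $\ol{v}(\psi)$ (e.g.\ in deducing $\ol{v}(\psi)\cap(a,a+1)=\emptyset$ from the absence of very-close witnesses in $U$) silently use the density of Claim~\ref{cl:th2:7}(4) and the endpoint case for grid points in $(a,a+1)$; these are routine to fill in.

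Once $a+1$ is identified as a limit point, both proofs finish identically via Claim~\ref{cl:th2:7}(4) and Claim~\ref{cl:th2:8}(4).
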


\begin{proof}[Proof of claim] 
Since $U,x\models_u\Di\psi$ there is $y$ close to $x$ with $U,y\models_u\psi$, and for any such $y$ the inductive hypothesis gives $W,g(x)\models_{gu}\psi$.  Suppose first that either $x$ is an endpoint of an interval in $\clK$ or that $y$ can be choosen to be an endpoint of an interval in $\clK$ or to be very close to $x$. Then since $x,y$ are close, it follows by parts (2) and (3) of 
Claim \ref{cl:th2:8}
that $g(x)$ and $g(y)$ are close, so $W,g(x)\models_{gu}\Di\psi$ as required. 

It remains to consider the case where $x$ belongs to some interval $(a,a+r)\in\clK$ and every $y$ in $\ol{u}(\psi)$ that is close to $x$ is in $I^-=(a-1,a-1+r)$ or $I^+=(a+1,a+1+r)$. Symbolically, we have 
\[
R_{<1}(x)\,\cap\,\ol{u}(\psi)\,\subseteq\, I^-\cup I^+.
\]
\vspace{-1ex}

\noindent Suppose $z$ is any real number belonging to $(a,a+r)$. With the possible exception of elements of $I^-\cup I^+$, an element of $V$ is close to $z$ iff it is close to $x$. Thus, 
\[
R_{<1}(z)\,\cap\,\ol{u}(\psi)\,\subseteq\, I^-\cup I^+.
\]
By part (4) of 
Claim \ref{cl:th2:7},
$\ol{u}(\psi)$ is dense in $\ol{v}(\psi)$ and it follows that 
\[
R_{<1}(z)\,\cap\,\ol{v}(\psi)\,\subseteq\, I^-\cup I^+.
\]

Consider the following sets 
\begin{align*}
S^- \,&=\,\{z\in (a,a+r) \mid z\mbox{ is close to some $y$ in $\ol{v}(\psi)\cap I^-$}\} \\
S^+ \,&=\,\{z\in (a,a+r) \mid z\mbox{ is close to some $y$ in $\ol{v}(\psi)\cap I^+$}\} 
\end{align*}

\noindent Note that each of these sets is open. Also, we assumed $U,x\models_u\Di\psi$, so by part (3) of 
Claim \ref{cl:th2:7}
we have $V,x\models_v\Di\psi$, hence $x\in\ol{v}(\Di\psi)$. It follows from 
Claim \ref{cl:th2:4}
that $(a,a+r)$ is contained in $\ol{v}(\Di\Psi)$. So each $z\in (a,a+r)$ has an element of $\ol{v}(\psi)$ close to it, and therefore $(a,a+r)$ is contained in $S^-\cup S^+$. 

Recall that a point of $V$ is $\psi$-sandwiched if there are points on both sides of the point that are close to it and in $\ol{v}(\psi)$. By 
Claim \ref{cl:th2:4},
if $I\in\clK$, then either every point in $I$ is $\psi$-sandwiched or none are. If $x$ is $\psi$-sandwiched, hence all points of $(a,a+r)$ are $\psi$-sandwiched, then each of $S^-$ and $S^+$ is equal to $(a,a+r)$. If $x$ is not $\psi$-sandwiched, hence no point of $(a,a+r)$ is $\psi$-sandwiched, then the sets $S^-$ and $S^+$ are disjoint, and since they are open and cover the connected set $(a,a+r)$, one of them is equal to $(a,a+r)$. We assume without loss of generality that $S^+=(a,a+r)$. 

Since $S^+=(a,a+r)$ there are points $z$ arbitrarily close to $a$ with $z$ close to some point $y$ in $\ol{v}(\psi)\cap (a+1,a+r+1)$. It follows that there are points $y\in\ol{v}(\psi)\cap I^+$ that are arbitrarily close to $a+1$. So $a+1$ is a limit point of $\ol{v}(\psi)\cap I^+$. By part (4) of 
Claim \ref{cl:th2:7},
it follows that $\ol{u}(\psi)\cap I^+$ is dense in $\ol{v}(\psi)\cap I^+$. Thus 
\[
\mbox{$a+1\,$ is a limit point of $\,\,\ol{u}(\psi)\cap I^+$.}
\] 
Then part (4) of 
Claim \ref{cl:th2:8}
gives that $a+1$ is a limit point of $g[\ol{u}(\psi)\cap I^+]$. Since $x\in I=(a,a+r)$, it follows that $a<g(x)$. So there is a point $y\in\ol{u}(\psi)\cap I^+$ with $g(x)$ close to $g(y)$. Since $y\in\ol{u}(\psi)$ we have $U,y\models_u\psi$, and the inductive hypothesis then gives $W,g(y)\models_{gu}\psi$. Since $g(x)$ is close to $g(y)$, we have $W,g(x)\models_{gu}\Di\psi$. 
\end{proof}

By 
Claim \ref{cl:th2:1}
we have $V,0\models_v\vf$. 
Then part (3) of 
Claim \ref{cl:th2:7}
gives that $U,0\models_u\vf$. 
Claim \ref{cl:th2:9} and Claim \ref{cl:th2:10}
established that $U,x\models_u\sigma$ iff $W,g(x)\models_{gu}\sigma$, so, since $g(0)=0$, we have $W,0\models_{gu}\vf$. In particular, $\vf$ is satisfiable in $W$. Since $W=(-d,d)\cap \Q$ and $d$ is the modal depth of $\vf$, it follows from Proposition~\ref{prop:modal-depth} that $\vf$ is satisfiable in $\Q$. Thus 
\Sp

\begin{theorem}\label{thm:less:Qincluded-in-R}
$\Logl(\Q)\subseteq\Logl(\R)$.\end{theorem}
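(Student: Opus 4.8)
The plan is to prove the contrapositive: every formula $\vf$ that is satisfiable in $\R$ under the $\Dl$ modality is also satisfiable in $\Q$. Assuming $\vf$ holds at some point of $\R$, which I may take to be $0$, the first move is to use Proposition~\ref{prop:modal-depth} to localise the problem to the bounded interval $V=(-d,d)$ with $d=\md(\vf)$, so that satisfiability becomes a property of this finite-diameter subframe (Claim~\ref{cl:th2:1}). The whole argument then aims to transport the satisfying valuation $v$ on $V$ to a valuation on $W=V\cap\Q$ that preserves the truth of every subformula of $\vf$.

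The geometric input is that the sets attached to each modal subformula are tame. For each $\Di\psi$ the value $\ol{v}(\Di\psi)$ is a finite disjoint union of open intervals (Claim~\ref{cl:th2:2}), and the set $D_\psi$ of $\psi$-sandwiched points is likewise such a union (Claim~\ref{cl:th2:3}). Collecting all their endpoints into a finite set $X$ and then, via the order-automorphism construction of Lemma~\ref{lem:RvsQ:>1:rationalVal} together with Remark~\ref{rem: auto <1}, arranging that $X$ is rational, I may assume $X$ consists of integer multiples of some $r=\frac{1}{N}$. This cuts $V$ into the uniform grid $\clK$ of length-$r$ cells on which the truth of each $\Di\psi$ and membership in each $D_\psi$ is constant (Claims~\ref{cl:th2:4} and~\ref{cl:th2:5}), and on which \emph{close} and \emph{very close} between cells behave as in Claim~\ref{cl:th2:6}.

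Next I would descend to a countable model. The downward L\"{o}wenheim-Skolem theorem, applied in a first-order language carrying the closeness relation, the order, constants for a countable dense set, and predicates for the grid cells and for each $v(p)$, yields a countable elementary substructure $U\subseteq V$ that is a dense linear order without endpoints, contains every grid endpoint, agrees with $V$ on modal satisfaction, and has each $\ol{u}(\psi)$ dense in $\ol{v}(\psi)$ (Claim~\ref{cl:th2:7}). Cantor's theorem then supplies an order-isomorphism $g\colon U\to W$ fixing the grid endpoints, whose interaction with closeness is recorded in Claim~\ref{cl:th2:8}. Setting $gu(p)=g[u(p)]$, the crux is the induction proving $U,x\models_u\sigma\Leftrightarrow W,g(x)\models_{gu}\sigma$ for every subformula $\sigma$, the only nontrivial case being $\sigma=\Di\psi$.

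The main obstacle, as expected, is this modal step, since the order-isomorphism $g$ need not preserve distance. The implication $W,g(x)\models_{gu}\Di\psi\Rightarrow U,x\models_u\Di\psi$ (Claim~\ref{cl:th2:9}) follows from the control of closeness under $g$ recorded in Claim~\ref{cl:th2:8}, but the converse (Claim~\ref{cl:th2:10}) is delicate precisely when every witness $y$ close to $x$ lies in one of the two extreme cells $I^-,I^+$ that are close but not very close to the cell of $x$; there order-preservation alone cannot guarantee that $g(y)$ stays close to $g(x)$. My plan is to split on whether $x$ is $\psi$-sandwiched: in the sandwiched case both source sets $S^-,S^+$ fill the whole cell, while in the non-sandwiched case $S^-,S^+$ are disjoint open sets covering the connected cell, so one of them is the entire cell, and a limit-point argument---using the density of $\ol{u}(\psi)$ in $\ol{v}(\psi)$ and the preservation of the relevant limit point in part~(4) of Claim~\ref{cl:th2:8}---produces a genuine close witness on the rational side. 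Once the equivalence is established, combining Claim~\ref{cl:th2:1}, part~(3) of Claim~\ref{cl:th2:7}, and $g(0)=0$ gives $W,0\models_{gu}\vf$; since $W=(-d,d)\cap\Q$ with $d=\md(\vf)$, Proposition~\ref{prop:modal-depth} lifts satisfiability to all of $\Q$, completing the inclusion $\Logl(\Q)\subseteq\Logl(\R)$.
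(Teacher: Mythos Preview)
Your proposal is correct and follows essentially the same approach as the paper: localise to a bounded interval, extract the finite grid from the endpoints of the $\clI_\psi$ and $\clJ_\psi$ families, rationalise via the order-automorphism of Remark~\ref{rem: auto <1}, descend to a countable elementary substructure via L\"{o}wenheim--Skolem, push across to $W$ by a cell-wise Cantor isomorphism, and handle the delicate modal step by the sandwiched/non-sandwiched split with the connectedness and limit-point arguments. The assembly you describe in the final paragraph is exactly the paper's concluding step.
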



\section{Further properties of the farness logic of $\R$}


In this section we consider further properties of the farness logic of $\R$, that is, the logic $\Logg(\R)$ that uses the modality $\Dg$ which we denote throughout this section as $\Di$. While it is known that this logic is decidable \cite{QTL1999}, we show that it is not finitely axiomatizable and that it does not have the finite model property. Similar studies could be made for the nearness logic $\Logl(\R)$, but we have not done this. 

\subsection{Lack of finite axiomatization}

We show that any axiomatization of $\Logg(\mR)$ contains infinitely many variables occurring in its formulas. In particular, this logic is not finitely axiomatizable. The argument uses a technique from \cite{MaksSkvShehtman}.

For a natural $n$, let $\vf_n$ be the formula 
\[
\Di^2 p_1\con \dots \con \Di^2 p_n \,\imp\, \Di(\Di p_1\con \dots \con \Di p_n).
\]
It is easy to check that this formula is valid in a frame iff it satisfies the first-order property 
\[
\AA x \AA y_1\dots \AA y_n \,(xR^2y_1\con \dots \con  x R^2 y_n \,\imp\, \EE z (xRz \con z R y_1\con \dots \con  z R y_n)).
\]

\begin{prp}\label{prp:nonFA}
Any axiomatization of $\Logg(\mR)$ has infinitely many variables. Consequently, $\Logg(\mR)$ is not finitely axiomatizable.
\end{prp}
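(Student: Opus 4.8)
The plan is to argue by contradiction using the formulas $\vf_n$, refuting them on carefully chosen frames that are nonetheless indistinguishable from $\mR$ by formulas in few variables. First I would record that each $\vf_n$ belongs to $\Logg(\mR)$: since $\mR$ is unbounded the relation $R^2$ is universal (cf.\ \eqref{eqn: universal in Dg}), so the antecedent of the first-order correspondent of $\vf_n$ holds automatically, and given any finitely many points $y_1,\dots,y_n$ one may choose $z$ far out along the line with $d(x,z)>1$ and $d(z,y_i)>1$ for all $i$; thus the correspondent holds and $\vf_n$ is valid in $\mR$.

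Now suppose, for contradiction, that $\Logg(\mR)$ is axiomatized by a set of formulas all of whose variables lie in $\{p_1,\dots,p_n\}$. Since validity in a frame is preserved under substitution, modus ponens and the normal rules, it suffices to produce a single frame $F$ with two properties: (a) every formula of $\Logg(\mR)$ in at most $n$ variables is valid in $F$; and (b) some theorem of $\Logg(\mR)$ fails in $F$. Indeed, (a) forces the hypothetical axioms to be valid in $F$, hence all of $\Logg(\mR)$ to be valid in $F$, contradicting (b). For (b) I would take the failing theorem to be $\vf_m$ for a suitably large $m=m(n)$ and build $F=F_n$ so as to violate its first-order correspondent: for instance, with a point $x$, intermediate points $z_1,\dots,z_m$ satisfying $xRz_i$, and targets $y_1,\dots,y_m$ with $z_iRy_j$ precisely when $j\neq i$. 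Then every $y_j$ is reached from $x$ in two steps, while no single $R$-successor of $x$ is related to all targets, so $\vf_m$ fails; the remaining edges are added (respecting the symmetry of $R$) so that the coarse structure of $F_n$, and in particular the behavior of $R^2$, mirrors that of $\mR$ without restoring a common successor.

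The heart of the argument, and the step I expect to be the main obstacle, is establishing (a): that no formula in $n$ variables can detect the defect built into $F_n$. The idea is that a valuation using only $p_1,\dots,p_n$ assigns each point one of at most $2^n$ ``colors,'' so once $m$ is large the targets $y_1,\dots,y_m$ (and likewise the intermediates) must repeat colors, and the color-reachability pattern seen from any point of $F_n$ collapses to one with boundedly many classes. I would then show that every such pattern is realized at some point of $\mR$ under a matching valuation, exploiting that in $\mR$ the farness successor set $R(a)=(-\infty,a-1)\cup(a+1,\infty)$ is co-bounded while $R^2$ is universal, so any finite palette of colors can be placed to reproduce it. Concretely, I would construct a bisimulation between $(F_n,w,u)$, for an arbitrary point $w$ and arbitrary $n$-variable valuation $u$, and a suitable pointed model in $\mR$; since bisimilar models satisfy the same modal formulas, every $n$-variable formula valid in $\mR$ is then valid in $F_n$, yielding (a). Carrying out this bisimulation—matching the symmetric one-step structure while using the universality of $R^2$ to discharge all deeper requirements, and verifying the $\mR$-realization of the collapsed color pattern—is where the real work lies; everything else is bookkeeping, and running the construction for each $n$ shows that no finite set of variables can suffice.
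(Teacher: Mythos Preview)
Your overall strategy—exhibit a frame validating every $n$-variable theorem of $\Logg(\R)$ while refuting some $\vf_m$—is exactly the paper's, and your bisimulation idea is in fact what underlies the argument. Where you diverge is in the choice of frame. The paper takes $F$ to be the irreflexive clique on $2^n+1$ vertices, which already refutes $\vf_m$ for every $m>2^n$ (let the $y_i$ list all vertices; irreflexivity forbids a common $R$-successor), and compares it with $G$, the irreflexive clique on $2^n$ vertices with a single loop. An explicit p-morphism sends $\R$ onto $G$, so $G\models\Logg(\R)$. The $n$-variable indistinguishability of $F$ and $G$ is then a one-line pigeonhole (which the paper cites as a known lemma): any $n$-variable valuation on $F$ gives two of its $2^n+1$ points the same color, and identifying them is a p-morphism onto $G$ compatible with the valuation, so any $n$-variable formula failing in $F$ already fails in $G$, hence in $\R$. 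This is precisely your bisimulation argument, made transparent by choosing a frame where the collapse is trivial.

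Your own $F_n$, by contrast, is underspecified (``the remaining edges are added so that the coarse structure mirrors $\R$'' is not a construction), and the bipartite $z_i/y_j$ gadget makes the needed collapse opaque: the $z_i$ have pairwise distinct neighbor-sets among the $y_j$, and it is not clear that every $n$-coloring of your frame admits an identification whose quotient is a p-morphic image of $\R$. That is the gap. The fix is not to work harder on the bisimulation but to replace your gadget with the irreflexive clique, where pigeonhole does all the work.
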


\begin{proof} 
Let $\Phi$ be a set of formulas in variables $p_1,\ldots,p_m$ and $\mR\mo\Phi$. Consider the frames $F=(\{0,\ldots, 2^{m}\},R)$, where $xRy$ iff $x\neq y$, and $G=(\{0,\ldots, 2^{m}-1\},R')$, where $x R' y$ iff $x\neq y$ or $x=y=0$. So $F$ is an irreflexive clique, and $G$ is an irreflexive clique with an additional single loop.

Observe that $G$ is a p-morphic image of $(\mR,R_{>1})$: for $1\leq i \leq 2^{m}-1$, map the interval $[2i, 2i+1]$ to $i$, and map the other points in $\mR$ to the reflexive point $0$. It is easy to check that this map is a p-morphism. Hence, $G\mo\Phi$. 
One can see that if a formula  $\vf$ contains at most $m$ distinct variables, then $F\mo \vf$  iff $G\mo \vf$; see \cite[Lemma 7.3]{ShShStoItl05} for details. It follows that $F\mo\Phi$.
Finally, notice that  $\Logg(\mR)$ is not valid in $F$: all formulas $\vf_n$ are valid in $\mR$, while $\vf_{n}$ is not valid in $F$ for $n>2^m$. 
Hence, $\Phi$ is not an axiomatization of $\mR$. 
\end{proof} 

\begin{remark}
The proof above can be easily adapted to show that for any unbounded metric space $X$, its farness logic $\Logg(X)$ is not finitely axiomatizable. 
\end{remark}

\subsection{The finite model property and the farness logic of $\R$}
In this subsection we show that the farness logic $\Logg(\R)$ does not have the \emph{finite model property}, in other words, there is a formula that does not belong to the logic but is valid in all finite models of the logic. 
To show this, we identify a modally-definable property, that of ``small anti-clique overlaps'', that holds in every finite graph that validates the farness logic of $\R$, while the farness graph of $\R$ itself does not have it.

We first provide the needed background. For a binary relation $R$, let $R^*$ be its reflexive and transitive closure. A frame $(X,R)$ is \emph{point-generated}, or \emph{rooted}, if there is $x\in X$ with $X=R^*(x)$ and is {\em pretransitive} if $R^*=\bigcup\{R^\ell\mid\ell\leq k\}$ for some natural number $k$. The following result is folklore, its proof is a small modification of the  Jankov-Fine theorem  \cite[Lemma 3.20]{BDV}.   

\Sp

\begin{proposition}[{\em Jankov-Fine theorem}]\label{prop:JankovFine}
Let $G$ be a pretransitive frame and $F$ be a finite rooted frame. Then $\Log(G)\subseteq \Log(F)$ iff  there is an onto $p$-morphism $G'\toto F$ for some rooted subframe $G'$ of $G$.
\end{proposition}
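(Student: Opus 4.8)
The plan is to prove this via the \emph{Jankov--Fine (characteristic) formula} of the finite rooted frame $F$, exactly as in \cite[Lemma 3.20]{BDV}, with the single adaptation that the ``reachability'' modality is expressed using pretransitivity of $G$. Write $F=(W,S)$ with root $w_0$ and $W=\{w_0,\dots,w_n\}$, and fix distinct variables $p_0,\dots,p_n$. Since $G$ is pretransitive, say $R^*=\bigcup\{R^\ell\mid \ell\leq k\}$, the master modality over $R$-reachable points is definable by $\Box^*\vf := \bigwedge_{\ell=0}^{k}\Box^\ell\vf$, and this is the only place the pretransitivity hypothesis is used. With it I would set $\chi_F = p_0 \con \Box^*\theta$, where $\theta$ is the conjunction of: (i) $\bigvee_i p_i$ and $\bigwedge_{i\neq j}\neg(p_i\con p_j)$, saying exactly one variable holds at each reachable point; (ii) $p_i\imp\Di p_j$ for every edge $w_i S w_j$ of $F$; and (iii) $p_i\imp\Box\bigvee_{j:\,w_i S w_j} p_j$ for each $i$.

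The heart of the argument is the standard lemma: \emph{$\chi_F$ is satisfiable in $G$ if and only if there is a rooted (point-generated) subframe $G'$ of $G$ admitting an onto $p$-morphism $G'\toto F$.} For the ``if'' direction, given $f\colon G'\toto F$ with $G'=R^*(x)$ and $f(x)=w_0$, I would take the valuation $v(p_i)=f^{-1}(w_i)$ and check clause by clause that $G',x\mo_v\chi_F$, the forth condition of $f$ giving (iii) and the back condition giving (ii); since truth at $x$ depends only on $R^*(x)=G'$, the formula $\chi_F$ is then satisfied in $G$. For the ``only if'' direction, if $G,x\mo_v\chi_F$, then $\theta$ holds throughout $G'=R^*(x)$, so each point of $G'$ satisfies a unique $p_i$ and the assignment $f(y)=w_i\iff y\mo_v p_i$ is well defined; clause (iii) yields the forth condition, clause (ii) yields the back condition, and surjectivity follows because $F$ is generated by $w_0=f(x)$ and the back condition propagates preimages along $S$-paths. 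This lemma is the main obstacle, in that it requires careful bookkeeping of the $p$-morphism conditions against the clauses of $\theta$, but it is entirely routine once the formula is set up.

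Granting the lemma, the equivalence follows quickly. For the ``$\Leftarrow$'' direction of the proposition, if $G'\toto F$ with $G'$ a rooted, hence generated, subframe of $G$, then validity is preserved downward to generated subframes and across onto $p$-morphisms, so $\Log(G)\subseteq\Log(G')\subseteq\Log(F)$. For ``$\Rightarrow$'', note that $\chi_F$ is satisfied at the root of $F$ itself under the valuation $p_i\mapsto\{w_i\}$, so $\neg\chi_F\notin\Log(F)$; the hypothesis $\Log(G)\subseteq\Log(F)$ then forces $\neg\chi_F\notin\Log(G)$, i.e. $\chi_F$ is satisfiable in $G$, and the lemma delivers the desired onto $p$-morphism from a rooted subframe of $G$. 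The one point to watch throughout is that every ``reachability'' assertion be legitimately modal, which is exactly what pretransitivity secures; without it $\Box^*$ would not be expressible by a formula and the whole scheme would break down.
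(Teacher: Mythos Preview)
Your proposal is correct and follows exactly the approach the paper indicates: the paper does not give its own proof but states the result as folklore, noting that ``its proof is a small modification of the Jankov--Fine theorem \cite[Lemma 3.20]{BDV},'' and your argument is precisely that modification, replacing the transitive-closure modality by the finite conjunction $\Box^*=\bigwedge_{\ell\leq k}\Box^\ell$ afforded by pretransitivity of $G$.
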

\Sp

Applying this to the situation at hand, since $\R$ is rooted and each point of $\R$ is a root, it follows that a finite rooted frame validates the farness logic of $\mR$ iff it is a p-morphic image of $\mR$. 
Throughout this sub-section, we assume that every frame $(X,R)$ is symmetric, and $R^2$ is universal, meaning $R^2=X\times X$. An \emph{anti-clique} is a subset $\Sigma$ of $X$ so that $x\,R\,y$ does not hold for any $x,y\in\Sigma$. As noted in (\ref{eqn: max anti-clique}),  
\[
X,R\models_v \Box^2(p\leftrightarrow \neg\Di p) \,\mbox{ iff }\, v(p)\mbox{ is a maximal anti-clique}
\]

\begin{proposition}\label{prop:noFmp:preimageOfMAC} A p-morphic preimage of an anti-clique  is an anti-clique, and that of a maximal anti-clique is a maximal anti-clique.    
\end{proposition}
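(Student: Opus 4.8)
The plan is to read off both halves of the statement directly from the two defining conditions of a p-morphism, matched against the characterization of (maximal) anti-cliques recorded in~(\ref{eqn: max anti-clique}). Let $f\colon(X,R)\to(Y,S)$ be a p-morphism, so that $f$ satisfies the \emph{forth} condition ($x\,R\,x'$ implies $f(x)\,S\,f(x')$) and the \emph{back} condition (if $f(x)\,S\,y'$ then there is $x'$ with $x\,R\,x'$ and $f(x')=y'$). The key is to use throughout the characterization behind~(\ref{eqn: max anti-clique}): a set $\Sigma$ is a maximal anti-clique precisely when it is an anti-clique and every point outside $\Sigma$ is $R$-related to some point of $\Sigma$; equivalently $\Sigma=X\setminus R[\Sigma]$. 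Fixing an anti-clique $A\subseteq Y$ and writing $\Sigma=f^{-1}[A]$, I would then argue the two assertions separately.

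For the first assertion I would use only the forth condition. If $x,x'\in\Sigma$ satisfied $x\,R\,x'$, then $f(x)\,S\,f(x')$ with $f(x),f(x')\in A$, contradicting that $A$ is an anti-clique; taking $x=x'$ shows in passing that $\Sigma$ carries no loops. Hence no two points of $\Sigma$ are related and $\Sigma$ is an anti-clique.

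For the second assertion, assuming now that $A$ is a maximal anti-clique, I would verify the covering half of the characterization using the back condition. Given $z\notin\Sigma$ we have $f(z)\notin A$, so maximality of $A$ yields $a\in A$ with $f(z)\,S\,a$; applying the back condition to $f(z)\,S\,a$ produces $x'$ with $z\,R\,x'$ and $f(x')=a\in A$, whence $x'\in\Sigma$. Thus every point off $\Sigma$ is $R$-related to a point of $\Sigma$, and combined with the first assertion this makes $\Sigma$ a maximal anti-clique.

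I expect the only delicate point to be the maximality half, and in particular the decision to phrase maximality as the covering condition $X\setminus\Sigma\subseteq R[\Sigma]$ from~(\ref{eqn: max anti-clique}) rather than as the bare order-theoretic requirement that $\Sigma$ admit no proper anti-clique extension. The back condition delivers a witness $x'$ that lies in the preimage $\Sigma$ exactly because the witness $a$ was selected inside $A$; a more naive attempt to block extensions of $\Sigma$ one point at a time would stall at loop points, since the forth condition cannot lift a loop at $f(z)$ in $Y$ back to a loop at $z$ in $X$. Using the covering characterization lets the back condition carry the argument cleanly and is the step I would be most careful to state correctly.
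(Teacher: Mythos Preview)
Your proof is correct. You unpack the characterization behind~(\ref{eqn: max anti-clique}) as the covering condition $\Sigma = X\setminus R[\Sigma]$ and then verify it directly for $f^{-1}[A]$: the anti-clique half by the forth condition, the covering half by the back condition. Your remark about loop points is on target---the covering formulation is exactly what is needed, and it is precisely what the formula in~(\ref{eqn: max anti-clique}) encodes.

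The paper argues differently. Rather than chase elements, it keeps the modal formula $\Box^2(p\leftrightarrow\neg\Di p)$ intact: set $v(p)=\Sigma$ in $G$ and $u(p)=f^{-1}[\Sigma]$ in $F$, and then invoke the standard fact that an onto $p$-morphism preserves modal truth, so $F,x\models_u\psi$ iff $G,f(x)\models_v\psi$. The maximal case falls out in one line, and the plain anti-clique case is recovered afterwards from the observation that every anti-clique extends to a maximal one. Your route is more elementary and self-contained (no appeal to the general truth-preservation theorem), and it proves the two halves in the opposite order; the paper's route is shorter once that machinery is on the table and makes the link to the modal definability of maximal anti-cliques explicit.
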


\begin{proof}
We prove the result for maximal anti-cliques, the result for anti-cliques \mbox{follows} since each anti-clique extends to a maximal one. Let $f:F\toto G$ be an onto $p$-morphism and $\Sigma$ be a maximal anti-clique in $G$. Consider a  valuation in $G$ with $\val(p)=\Sigma$, and a valuation $u$ in $F$ with $\valu(p)=f^{-1}[\Sigma]$. The result follows from the fact that $F,x\mo_\valu \psi$ iff $G,f(x)\mo_\val \psi$ for formulas in the variable $p$.
\end{proof}

\begin{definition}
A frame $(X,R)$ has \emph{small anti-clique overlaps} if any two distinct anti-cliques have at most one element in common. 
\end{definition}
\Sp

To verify that a frame has the small anti-clique overlap property it is enough to verify this for maximal anti-cliques. We wish to express this overlap property in terms of a formula. To this end, let $\psi$ be the following
\[
\Box^2(p\leftrightarrow \neg\Di p)\con \Box^2(q\leftrightarrow \neg\Di q)\con \Di^2(p\con \neg q)
\]
From the discussion above, it is easily seen that for any valuation $v$ on a symmetric frame $(X,R)$ with $R^2$ universal, we have $\ol{v}(\psi)=X$ if $v(p)$ and $v(q)$ are distinct maximal anti-cliques and $\ol{v}(\psi)=\emptyset$ otherwise. Then set $\vf$ to be the following formula 
\[
\psi\wedge\Di^2 (p\con q \con r) \,\imp\, \Box^2(p \con q\imp r) 
\]

\begin{prp}
\label{prop:noFmp:TrivialAnticliqueCondition}
Let $(X,R)$ be a symmetric frame with $R^2$ universal. Then $(X,R)$ has small anti-clique overlaps iff $X,R\models\vf$.
\end{prp}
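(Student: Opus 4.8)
The plan is to reduce validity of $\vf$ to a purely combinatorial condition on maximal anti-cliques, exploiting the hypothesis that $R^2$ is universal so that every subformula beneath a $\Di^2$ or $\Box^2$ becomes two-valued. First I would record the effect of the two ``universal'' modalities: since $R^2=X\times X$, for any formula $\alpha$ we have $\ol{v}(\Di^2\alpha)=X$ when $\ol{v}(\alpha)\neq\emptyset$ and $\ol{v}(\Di^2\alpha)=\emptyset$ otherwise, while $\ol{v}(\Box^2\alpha)=X$ when $\ol{v}(\alpha)=X$ and $\emptyset$ otherwise; this is item~(\ref{eqn: universal in Dg}) applied to the present frame. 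Combining this with the already-noted equivalence that $\ol{v}(\psi)=X$ exactly when $v(p)$ and $v(q)$ are distinct maximal anti-cliques (and $\emptyset$ otherwise), each of the pieces $\psi$, $\Di^2(p\con q\con r)$, and $\Box^2(p\con q\imp r)$ takes only the values $X$ and $\emptyset$. Hence $\vf$ fails under a valuation $v$ precisely when its antecedent equals $X$ and its consequent equals $\emptyset$, i.e. exactly when $v(p),v(q)$ are distinct maximal anti-cliques, $v(p)\cap v(q)\cap v(r)\neq\emptyset$, and $v(p)\cap v(q)\not\subseteq v(r)$.

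With this reduction in hand, the proposition becomes: no valuation of this ``bad'' form exists iff any two distinct maximal anti-cliques meet in at most one point. For the forward implication I would assume $(X,R)$ has small anti-clique overlaps; in particular any two distinct maximal anti-cliques share at most one element. Given a valuation with $v(p),v(q)$ distinct maximal anti-cliques and $v(p)\cap v(q)\cap v(r)\neq\emptyset$, the set $v(p)\cap v(q)$ has at most one element and contains a point of $v(r)$, so it is a singleton contained in $v(r)$; thus $v(p)\cap v(q)\subseteq v(r)$, no bad valuation exists, and $X,R\models\vf$.

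For the converse I would argue contrapositively. If two distinct maximal anti-cliques $M_1,M_2$ share distinct points $a\neq b$, I would set $v(p)=M_1$, $v(q)=M_2$, and $v(r)=\{a\}$. Then $\psi$ holds everywhere, the point $a\in v(p)\cap v(q)\cap v(r)$ witnesses $\ol{v}(\Di^2(p\con q\con r))=X$, while $b\in (v(p)\cap v(q))\setminus v(r)$ gives $v(p)\cap v(q)\not\subseteq v(r)$, so $\vf$ is refuted. Hence validity of $\vf$ forces any two distinct maximal anti-cliques to overlap in at most one point, and, as already noted, verifying the overlap condition on maximal anti-cliques suffices to conclude that $(X,R)$ has small anti-clique overlaps.

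The computations are routine once the universal-modality simplification is installed; the only genuine subtlety is organizational, namely recognizing that $\psi$, $\Di^2(p\con q\con r)$, and $\Box^2(p\con q\imp r)$ are each two-valued, so that validity of the implication collapses to the single inclusion $v(p)\cap v(q)\subseteq v(r)$ under the hypothesis that $v(p),v(q)$ are distinct maximal anti-cliques meeting $v(r)$. The main point to watch is the passage between the definition's condition on arbitrary anti-cliques and the maximal-anti-clique condition that $\psi$ actually encodes, which is where the appeal to the reduction to maximal anti-cliques is needed.
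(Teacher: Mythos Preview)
Your proposal is correct and follows essentially the same approach as the paper: both reduce the validity of $\vf$ to the two-valued behavior of $\psi$, $\Di^2(p\con q\con r)$, and $\Box^2(p\con q\imp r)$, and both establish the equivalence via the same valuation $v(p)=M_1$, $v(q)=M_2$, $v(r)=\{a\}$. The only cosmetic difference is that you phrase the ``$\Leftarrow$'' direction contrapositively while the paper argues it directly, but the underlying computation is identical.
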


\begin{proof}
``$\Rightarrow$'' Let $v$ be a valuation for which the premise of $\vf$ holds at some point, and in fact therefore holds everywhere. Then $v(p)$ and $v(q)$ are distinct maximal anti-cliques and $v(r)$ contains a point $x$ of their intersection. Since the frame has small anti-clique overlaps, $v(p)\cap v(q)=\{x\}$, hence $v(p)\cap v(q)\subseteq v(r)$. This shows that the conclusion of $\psi$ also holds everywhere. ``$\Leftarrow$''
Suppose $P,Q$ are distinct maximal anti-cliques and that $x$ belongs to their intersection. Let $v$ be a valuation with $v(p)=P$, $v(q)=Q$ and $v(r)=\{x\}$. Then the premise of $\vf$ holds everywhere under the valuation $v$, and since $\vf$ is valid in the frame, the conclusion must also hold everywhere under the valuation $v$. But this says $P\cap Q\subseteq \{x\}$. 
\end{proof}



%

Maximal anti-cliques in $\R$ are closed unit intervals, so $\R$ clearly does not have small overlaps of anti-cliques, hence $\vf$ is not valid in $\R$.  However, as we will see from the next statement, $\vf$ is valid in all finite p-morphic images of $\mR$. 
\Sp

\begin{lemma} \label{lem:noFmp:dim1}
Each finite p-morphic image of $\R$ has small anti-clique overlaps. 
\end{lemma}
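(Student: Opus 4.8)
The plan is to argue by contradiction, reducing everything to the behaviour of a single p-morphism on a short real interval. Let $f\colon\R\toto G$ be an onto p-morphism with $G$ finite. As the text records, it suffices to bound the overlap of \emph{maximal} anti-cliques, so suppose two distinct maximal anti-cliques $P,Q$ of $G$ satisfy $|P\cap Q|\ge 2$. By Proposition~\ref{prop:noFmp:preimageOfMAC} the sets $f^{-1}[P]$ and $f^{-1}[Q]$ are maximal anti-cliques of $\R$, hence closed unit intervals; after a translation I may take $f^{-1}[P]=[0,1]$ and $f^{-1}[Q]=[\beta,\beta+1]$. Since $f$ is onto, $f[f^{-1}[P]\cap f^{-1}[Q]]=P\cap Q$, so the overlap $B=[0,1]\cap[\beta,\beta+1]$ has at least two image values. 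In particular $B$ is nondegenerate, forcing $0<\beta<1$ and $B=[\beta,1]$, an interval of length $1-\beta<1$ on which $f$ takes at least two values, all lying in $P\cap Q$.

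The crucial structural point I would exploit is \emph{confinement}: for $g\in P\cap Q$ the fibre $f^{-1}(g)$ lies in $f^{-1}[P]\cap f^{-1}[Q]=B$, an interval of diameter $<1$. Now the back condition must hold pointwise, so for each $x\in f^{-1}(g)$ and each $S$-neighbour $v$ of $g$ there is a point of $f^{-1}(v)$ at distance $>1$ from $x$. I would read this as a covering requirement: as $x$ ranges over the bounded fibre $f^{-1}(g)\subseteq[\beta,1]$, every neighbour's fibre must reach distance $>1$ from \emph{every} such $x$, which pushes neighbour fibres into $(-\infty,\underline g-1)\cup(\overline g+1,\infty)$ away from a whole neighbourhood of $B$ (here $\underline g,\overline g$ are the inf and sup of the fibre). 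Since $\R$ is covered by the finitely many fibres of $G$ and at least one fibre must meet the band just to the left of $B$ (the region within distance $1$ of $B$), I would track a value $v$ whose fibre meets this band: confinement makes $v$ a neighbour of some overlap element $g$, yet points of $f^{-1}(v)$ sitting close to $B$ cannot reach $f^{-1}(g)$ by a far jump, so the back condition fails at those points. The aim is to turn this into a single explicit unreachable pair, thereby contradicting that $f$ is a p-morphism.

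The hard part will be exactly this last step: converting the covering/confinement tension into one clean, unavoidable failure of the back condition. The obstacle is that fibres of $f$ need not be intervals, so one cannot simply say ``$f$ is locally constant on $B$''; two distinct overlap elements could a priori be densely interleaved ``twins'' with equal neighbourhoods, and any attempt to repair a back-failure by subdividing one fibre tends to create a new straddling fibre and a fresh failure elsewhere. Making the argument rigorous therefore requires careful bookkeeping of the suprema and infima of the finitely many fibres meeting the bands adjacent to the endpoints $\beta$ and $1$ of $B$, together with the observation that each such repair strictly refines the partition near $B$ — an infinite regress that is impossible for a \emph{finite} $G$. Once an unreachable neighbour is produced, the contradiction is immediate, and one concludes that $f$ cannot take two values on $B$; hence $|P\cap Q|=1$, so $G$ has small anti-clique overlaps.
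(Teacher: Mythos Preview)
Your setup is correct and matches the paper's: the two maximal anti-cliques pull back to $[0,1]$ and $[\beta,\beta+1]$ with $0<\beta<1$, and the \emph{confinement} observation that $f^{-1}(g)\subseteq B=[\beta,1]$ for every $g\in P\cap Q$ is right. (A small slip: the exclusion zone for a neighbour's fibre is $[\overline g-1,\underline g+1]$, not $[\underline g-1,\overline g+1]$; the bars are swapped.)

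The genuine gap is the step ``confinement makes $v$ a neighbour of some overlap element $g$''. This is not justified, and in fact the opposite is what confinement gives you. Running the back condition carefully: if $v$ is adjacent to some $g\in P\cap Q$, then every $y\in f^{-1}(v)$ must be far from some point of $f^{-1}(g)\subseteq[\beta,1]$, which forces $y\notin[0,\beta+1]$. So any $v$ whose fibre meets the band $[0,\beta+1]$ is \emph{not} adjacent to any $g\in P\cap Q$ --- precisely the wrong direction for your argument. There is no immediate back-condition failure to point to, and the ``infinite regress'' you gesture at (``each repair strictly refines the partition'') is not defined: you cannot subdivide fibres of a fixed $f$, and it is unclear what quantity is strictly decreasing.

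The paper takes a cleaner, purely algebraic route that sidesteps this difficulty entirely. Rather than hunting for a local back-condition failure, it works in the subalgebra $\clA=\{f^{-1}[S]:S\subseteq X\}$ of $\clP(\R)$, which is closed under $\Di$ since $f$ is a p-morphism. From $[0,1],[\beta,\beta+1]\in\clA$ one has $[0,\beta+1],(1,\beta+1]\in\clA$, and then a one-line computation --- if $[a,b],(c,b]\in\clA$ with $a<b-1<c<b$ then $(b,c+1]=-\bigl(\Di(c,b]\cup[a,b]\bigr)\in\clA$ --- lets you iterate to produce $(n,\beta+n]\in\clA$ for every $n\geq 1$. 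These are pairwise distinct, so $\clA$ and hence $X$ are infinite. This \emph{is} an ``infinite regress'', but it is driven by the modal operator acting on preimage sets rather than by any pointwise analysis of fibres, and it needs no case analysis of who is adjacent to whom.
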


\begin{proof}
Assume $(X,R)$ is a frame and $f:\mR\toto X$ is an onto p-morphism. Then $R$ is symmetric and $R^2$ is universal since these properties hold in $\R$. We assume that $(X,R)$ does not have small anti-clique overlaps, and using this, show that $X$ is infinite. Let $\clA=\{f^{-1}[S]\mid S\subseteq X\}$. Since $f$ is a $p$-morphism, $f^{-1}[\Di S]=\Di f^{-1}[S]$ for each $S\subseteq X$, and it follows that $\clA$ is a modal subalgebra of $\clP(\R)$. 
\vspace{2ex}

\noindent {\bf Claim:} If $[a,b],(c,b]\in \clA$ and $a<b-1<c<b$, then $(b,c+1]\in \clA$.

\begin{proof}[Proof of claim]
$-\Di(c,b]=[b-1,c+1]$, so $(b,c+1]= - (\Di(c,b]\cup [a,b])$.
\end{proof}

Since $(X,R)$ does not have small anti-clique overlaps, it has two distinct \mbox{maximal} anti-cliques that intersect in more than one element. By Proposition~\ref{prop:noFmp:preimageOfMAC} their pre-images under $f$ are distinct maximal anti-cliques and must also intersect in more than one element. Since maximal anti-cliques in $\R$ are closed unit intervals, without loss of generality we assume that they are the intervals $[0,1]$ and $[d,d+1]$ for some $0<d<1$, and since these are preimages of subsets of $X$, they belong to $\clA$. 

We show by induction that the intervals $[0,d+n]$ and $(n,d+n]$ belong to $\clA$ for each natural number $n>0$. This will establish that $\clA$, and hence $X$, are infinite. For the base case, since $[0,1]$ and $[d,d+1]$ belong to $\clA$ and $\clA$ is closed under the Boolean and modal operations, $[0,d+1]$ and $(1,d+1]$ belong to $\clA$. For the inductive step, assume  that $[0,d+n]$ and $(n ,d+n]$ are in $\clA$ for some $n\geq 1$. Since $0<d<1$ we have $0<d+n-1<n<n+d$, so by the claim above, we have 
\[(d+n,n+1]\in\clA.\]
Then, since the inductive hypothesis assumes $[0,d+n]\in\clA$ and $\clA$ is closed under finite unions, we have $[0,n+1]$ belongs to $\clA$. Then, since $[0,n+1], (d+n,n+1]\in\clA$ and $0<n<d+n<n+1$, the claim above yields 
\[
(n+1,d+n+1]\in\clA.
\]
Since $[0,n+1]\in\clA$ we have $[0,d+n+1],(n+1,d+n+1]\in\clA$ as required. 
\end{proof}

\begin{theorem}\label{thm:noFMP}
$\Logg(\R)$ does not have the finite model property. 
\end{theorem}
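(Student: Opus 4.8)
The plan is to exhibit a single formula witnessing the failure of the finite model property, namely the formula $\varphi$ introduced just before Proposition~\ref{prop:noFmp:TrivialAnticliqueCondition}, and to show simultaneously that $\varphi\notin\Logg(\R)$ while $\varphi$ is valid in every finite frame validating $\Logg(\R)$. Since the paper works in a setting where the finite frame property and the finite model property coincide, it suffices to argue that $\Logg(\R)$ is not the logic of any class of finite frames: if it were, then because $\varphi\notin\Logg(\R)$ some finite $\Logg(\R)$-frame would have to refute $\varphi$, contradicting the second claim.

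First I would record that $\varphi\notin\Logg(\R)$. The farness relation on $\R$ is symmetric and $R^2$ is universal, so Proposition~\ref{prop:noFmp:TrivialAnticliqueCondition} applies and tells us that $\R\models\varphi$ iff $\R$ has small anti-clique overlaps. But the maximal anti-cliques of $\R$ are exactly the closed unit intervals, and two such intervals $[0,1]$ and $[d,d+1]$ with $0<d<1$ are distinct yet share the whole subinterval $[d,1]$, so $\R$ fails to have small anti-clique overlaps. Hence $\varphi$ is not valid in $\R$.

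Next I would show $\varphi$ holds in every finite frame that validates $\Logg(\R)$. The reduction proceeds through rooted frames: validity of a formula in a frame is equivalent to its validity in all point-generated subframes, and a point-generated subframe of an $L$-frame is again an $L$-frame. So it is enough to prove that every \emph{finite rooted} frame validating $\Logg(\R)$ satisfies $\varphi$. By the observation following Proposition~\ref{prop:JankovFine} (the Jankov-Fine theorem combined with the fact that $\R$ is rooted from each of its points), such a frame is precisely a p-morphic image of $\R$. Lemma~\ref{lem:noFmp:dim1} then gives that it has small anti-clique overlaps, and a second application of Proposition~\ref{prop:noFmp:TrivialAnticliqueCondition} yields that it validates $\varphi$.

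Finally I would assemble the contradiction. Assume $\Logg(\R)$ has the finite frame property, so it equals $\Log(\mathcal{C})$ for a class $\mathcal{C}$ of finite frames, each of which validates $\Logg(\R)$. Since $\varphi\notin\Logg(\R)$, some frame in $\mathcal{C}$ refutes $\varphi$; passing to a point-generated subframe at the refuting point yields a finite rooted $\Logg(\R)$-frame refuting $\varphi$, contradicting the previous paragraph. The technical heart of the argument is entirely carried by Lemma~\ref{lem:noFmp:dim1}; the remaining obstacle is organizational, namely the passage from an arbitrary finite $\Logg(\R)$-frame to a rooted one so that the p-morphic-image characterization can be invoked.
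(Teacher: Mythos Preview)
Your proposal is correct and follows essentially the same route as the paper: exhibit the formula $\varphi$, observe it fails in $\R$, reduce validity in an arbitrary finite $\Logg(\R)$-frame to validity in its rooted subframes, invoke the Jankov--Fine characterization to identify each such rooted subframe as a p-morphic image of $\R$, apply Lemma~\ref{lem:noFmp:dim1}, and conclude via Proposition~\ref{prop:noFmp:TrivialAnticliqueCondition}. The only small point the paper makes explicit that you leave implicit is verifying, before the second appeal to Proposition~\ref{prop:noFmp:TrivialAnticliqueCondition}, that the finite rooted frame in question is symmetric with $R^2$ universal; the paper derives this from rootedness together with the fact that symmetry and ``$R^2$ is an equivalence'' are modally definable properties valid in $\R$, though it follows equally well from being a p-morphic image of $\R$.
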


\begin{proof}
Let $L$ be the logic $\Logg(\R)$ and suppose that $F$ is a finite $L$-frame. It is well-known the logic of $F$ is the logic of the set $\clK$  of rooted subframes of $F$, all of which are clearly also $L$-frames. If $(X,R)\in\clK$, then by Proposition~\ref{prop:JankovFine} $(X,R)$ is a p-morphic image of $\R$. Then $R$ is symmetric and $R^2$ is an equivalence since these properties can be expressed by formulas that are valid in $\R$. Since $(X,R)$ is rooted and $R^2$ is an equivalence, it follows that $R^2$ is universal. So by Lemma~\ref{lem:noFmp:dim1} it has small anti-clique overlaps. Then by Proposition~\ref{prop:noFmp:TrivialAnticliqueCondition} $X,R\models\phi$. So the formula $\vf$ is valid in $\clK$ and hence in $F$. So $\vf$ is valid in all finite $L$-frames but it does not belong to $\Logg(\R)$. 
\end{proof}

\section{Conclusions}

We have considered Euclidean spaces $\R^n$ equipped with modalities $\Dg,\Dl$ and $\De$ and have shown that these languages a surprisingly expressive. In particular, in each modality the logics for $\R^n$ for $n\in\N$ are all distinct. 
\Sp

\noindent {\bf Comparing farness logics}

\begin{itemize}
\itemindent = 5ex
\item[(1)] The logics $\Logg(\R^n)$ are all distinct.  
\end{itemize}

\noindent {\bf Comparing nearness logics} 

\begin{itemize}
\itemindent = 5ex
\item[(1)]  the logics $\Logl(\R^n)$ form a strictly decreasing chain. 
\end{itemize}

\noindent {\bf Comparing constant distance logics}

\begin{itemize}
\itemindent = 5ex
\item[(1)] $\Loge(\R^n)\not\subseteq\Loge(\R^m)\,$ for each $n<m$    
\item[(2)] $\Loge(\R^m)\not\subseteq\Loge(\R^n)\,$ for $m$ sufficiently larger than $n$
\item[(3)] $\Loge(\R)$ and $\Loge(\R^2)$ are incomparable to all other logics
\end{itemize}

\noindent It follows that the poset of constant distance logics contains an infinite anti-chain. 
\Sp

\noindent {\bf Comparing  logics of $\Q$ and $\R$}

\begin{itemize}
\itemindent = 5ex
\item[(1)] $\Logg(\Q)$ is strictly contained in $\Logg(\R)$  
\item[(2)] $\Logl(\Q)$ is strictly contained in $\Logl(\R)$
\item[(3)] $\Loge(\Q)$ is equal to $\Loge(\R)$
\end{itemize}

\noindent {\bf Negative results on the farness logic of $\R$}

\begin{itemize}
\itemindent = 5ex
\item[(1)] $\Logg(\R)$ cannot be axiomatized with finitely many variables  
\item[(2)] $\Logg(\R)$ does not have the finite model property
\end{itemize}

\noindent {\bf Directions for further study} There are unsolved problems in the directions we have considered and other directions worthy of study.
\Sp

\begin{problem}
What is the order structure of the logics $\Logg(\R^n)$?   
\end{problem}
\Sp

\begin{problem}
Do the nearness logics $\Logl(\R^n)$ form an anti-chain?   
\end{problem}
\Sp

\begin{problem}
Study the logics of $\Q^n$ and $\R^n$ in the modalities $\Dg,\Dl$ and $\De$. 
\end{problem}
\Sp

\begin{problem}
Study the logics of the Cantor set and $p$-adic numbers in various modalities. 
\end{problem}
\Sp

It follows from \cite{QTL1999} that the logics of farness and nearness on the real line are decidable. We also know that in a richer language (with a family $\Di_{<r}$, $\Di_{\leq r}$ of nearness  operators  and the topological closure modality), the logic of the real plane is undecidable \cite{Wolter2005}. 

\Sp

\begin{problem}
Are the logics $\Logg(\R^n)$, $\Logl(\R^n)$ and $\Loge(\R^n)$ decidable, or at least recursively axiomatizable, in dimension $n>1$?
\end{problem}
\Sp

In \cite{Kutz-Sturm-Suzuki-Wolter-Zakharyaschev2002,Wolter2005,Kutz2007} axiomatizations of the class of all metric spaces are given for many powerful distance languages containing various families of modalities. In particular, an axiomatization of the farness logic of the class of all metric spaces follows from \cite{Kutz2007}.  
The axiomatization problem for specific classes of metric spaces in various languages is of interest \cite{Wolter2005,KuruczWZ05}. We have obtained axiomatizations for the farness logics of the class of unbounded metric spaces and the class of ultrametric spaces and will provide these in a subsequent paper. 

We have shown in Section~5 that the farness logic of the reals cannot be finitely axiomatized. We have numerous formulas that hold in the nearness logic $\Logl(\R)$ but not in 
the nearness logics of all metric spaces, or even in $\R^n$ for higher dimensions. We anticipate that the nearness logic of $\R$ is not finitely axiomatizable.

Finally, there are many possibilities in terms of considering additional Euclidean logics, such as the logics $\Log_{\scaleto{\,\leq}{3.5pt}1}(\R^n)$ and $\Log_{\scaleto{\,\geq}{3.5pt}1}(\R^n)$ obtained from the modalities 
$\Di_{\leq 1}$ and $\Di_{\geq 1}$. One could also consider Euclidean logics where nearness is interpreted as strictly positive distance of at most 1, and so forth. These would be interesting topics for future study. 






\bigskip 

\bmhead{Acknowledgements}
This work was supported by NSF Grant DMS - 2231414. We are grateful to Andrzej Ehrenfeucht and Michael Zakharyaschev  for valuable discussions. Finally, we thank the referees for carefully reading the manuscript and for their many helpful suggestions. 

\bibliographystyle{unsrt}
\bibliography{metric}

\end{document}